\documentclass[12pt]{article}
\usepackage{amsmath,amssymb,amsfonts,amsthm}
\usepackage{eucal} 
\usepackage{graphicx}

\newcounter{ecount}
\setcounter{ecount}{0}
\newenvironment{ex}[1][Example]{\begin{trivlist}
                                 \item[\hskip \labelsep {\bfseries #1}]}{\end{trivlist}}
\newenvironment{remark}[1][Remark]{\begin{trivlist}
                                 \item[\hskip \labelsep {\bfseries #1}]}{\end{trivlist}}
\newenvironment{strategy}[1][Strategy]{\par\noindent\textbf{Strategy} }{}
\newcommand{\slz}{SL(2,\mathbb{Z})}
\newcommand{\cF}{\mathcal{F}}
\newcommand{\cH}{\mathcal{H}}
\newcommand{\C}{\mathbb{C}}
\newcommand{\R}{\mathbb{R}}

\newcommand{\Z}{\mathbb{Z}}
\newcommand{\cO}{\mathcal{O}}

\newcommand{\cM}{\mathcal{M}}

\newcommand{\cL}{\mathcal{L}}
\newcommand{\cI}{\mathcal{I}}
\newcommand{\cJ}{\mathcal{J}}

\newcommand{\cK}{\mathcal{K}}

\newcommand{\Ctn}{(\C^2)^{[n]}}
\newcommand{\bW}{W}
\newcommand{\bE}{E}

\newcommand{\Pp}{\mathbb{P}^1}
\newcommand{\Pt}{\mathbb{P}^2}
\newcommand{\Jt}{\tilde{J}}

\newcommand{\lang}{\left\langle}
\newcommand{\rang}{\right\rangle}

\newcommand{\lv}{\left |}

\newcommand{\Cx}{\mathbb{C}^\times}
\newcommand{\slc}{\widehat{sl_2}\C}
\newcommand{\fock}{\mbox{$\bigwedge$}^\infty V}
\newcommand{\focko}{\mbox{$\bigwedge$}^\infty_0 V}
\newcommand{\fockm}{\mbox{$\bigwedge$}^\infty_m V}

\DeclareMathOperator{\Td}{Td}
\DeclareMathOperator{\supp}{supp}

\DeclareMathOperator{\Ext}{Ext}
\DeclareMathOperator{\End}{End}
\DeclareMathOperator{\Tot}{Tot}
\DeclareMathOperator{\Lie}{Lie}

\DeclareMathOperator{\ch}{ch}

\DeclareMathOperator{\zee}{\mathfrak{z}}
\DeclareMathOperator{\Tr}{Tr}
\DeclareMathOperator{\hook}{h}
\DeclareMathOperator{\bl}{bl}
\DeclareMathOperator{\Sym}{Sym}
\DeclareMathOperator{\wt}{wt}

\newtheorem{Theorem}{Theorem}
\newtheorem{Lemma}{Lemma}

\newtheorem{Proposition}{Proposition}
\newtheorem{Claim}{Claim}

\author{Erik Carlsson}

\title{Vertex Operators and Moduli Spaces of Sheaves}

\begin{document}

\maketitle

\abstract{The \emph{Nekrasov partition function} in supersymmetric quantum gauge theory
is mathematically formulated as an 
equivariant integral over certain moduli spaces of sheaves on a complex surface.
In \emph{Seiberg-Witten Theory and Random Partitions}, Nekrasov and Okounkov
studied these integrals using the representation theory of ``vertex operators'' and
the infinite wedge representation. Many of these operators arise naturally
from correspondences on the moduli spaces, such as Nakajima's Heisenberg operators,
and Grojnowski's vertex operators.

In this paper, we build a new vertex operator out of the Chern class of a vector bundle
on a pair of moduli spaces. This operator has the advantage that it
connects to the partition function by definition. It also incorporates the 
canonical class of the surface, whereas many other studies assume
that the class vanishes. When the moduli space is the Hilbert scheme, we present
an explicit expression in the Nakajima operators, and the resulting combinatorial identities.

We then apply the vertex operator to the above integrals. 
In agreeable cases, the commutation properties of the
vertex operator result in \emph{modularity} properties of the partition function and 
related correlation functions.
We present examples in which the integrals are completely determined by their modularity,
and their first few values.}


\section{Introduction}

The original motivation for this paper is a collection of (equivariant) integrals on
the moduli space of framed torsion-free sheaves on $\Pt$. These functions are the 
\emph{Nekrasov partition functions}, and the related correlation functions,
defined below. In \cite{NO}, Okounkov and Nekrasov
showed that they may be studied using the representation theory of the infinite
wedge representation and vertex operators. The vertex operators  are auxilliary
in the sense that they are not directly related to the partition functions; they are
tools for performing the integration. 

The vertex operator is the central object in this paper. 
In fact, it is as important to us as the original integrals.
We present a geometric (in fact, $K$-theoretic) construction
of the vertex operator, with the original intent of generalizing the above methods to
other surfaces.  It's geometric definition is more
general, and is therefore a better logical starting point for this paper than
its representation-theoretic definition. Therefore, we begin by constructing the vertex operator,
and present it's applications to the partition and correlation functions second.

\subsection{Hilbert scheme structures}

Given a smooth quasi-projective surface $S$, let $S^{[n]} = Hilb^n(S)$ denote the Hilbert scheme
of $n$ points on $S$. Assume that if $S$ is not projective, it carries an action
of a complex torus $T$ with compact fixed loci, inducing such an action on
$S^{[n]}$. The point here is that integration make sense on $S^{[n]}$, either
in the usual sense for $S$ projective, or in the sense of \emph{localization} for
$S$ equivariant. Let
$T_mS^{[n]}$ denote the tangent bundle to $S^{[n]}$ together with an action of
$\Cx$ by scaling the fibers. We consider the generating functions
\[F(k_1,...,k_N;m,q) = \]
\begin{equation}
\sum_{n\geq 0} q^n
\int_{S^{[n]}} \ch_{k_1}(\cO/\cI)\cdots \ch_{k_N}(\cO/\cI) e(T_mS^{[n]}).
\label{correlations}
\end{equation}
Here $\cO/\cI$ is the tautological vector bundle on $S^{[n]}$ whose fiber over an
ideal sheaf $(I \subset \cO) \in S^{[n]}$ is $H^0(\cO/I)$. $T_mS^{[n]}$ is an
\emph{equivariant} bundle on $S^{[n]}$, whether or not $S$ is equivariant,
and $e(T_mS^{[n]})$ is the equivariant Euler class. It depends on a parameter
$m \in \Lie(\Cx) = \C$, hence the subscript in $T_m$.

On one hand, this sort of integral can be thought of as a \emph{correlation function}
corresponding to a the partition function in the Nekrasov theory, reviewed
in section \ref{neksection}. On the other
hand, it represents a collection of numerical invariants of $S^{[n]}$: 
if $S$ is projective, the class $\ch_{k_1}(\cO/\cI)\cdots \ch_{k_N}(\cO/\cI)$ may not lie in
the top-dimensional cohomology of the Hilbert scheme, hence integrating it returns 0.
However, the equivariant Euler class as a function of $m$ is
\[e(T_mS^{[n]}) = \sum_j m^j c_{2n-j}(TS^{[n]}),\]
so the Euler class term serves to fill out the remaining degrees necessary to
give a top-dimensional cohomology class with the suitable Chern class of
the tangent bundle. Given that $k_1,...,k_N$ are fixed, filling out the 
remaining terms with the tangent bundle is necessary for a
generating function with more than one nonzero term if $S$ does not have a torus action.

Combining the correlation functions in a generating functions is crucial.
On $S^{[n]}$ for fixed $n$, operations such as the cup product are
difficult to understand. However, when the Hilbert schemes ranging over all $n$ are considered
together, the cohomologies inherit richer structures. That is the
strategy we employ for studying the generating function $F$: each
coefficient $[q^n] F(q)$ is difficult to understand in a meaningful way.
The generating function, on the other hand, admits a 
construction by canonical operators on
\[\cH = \bigoplus_n \cH_n = \bigoplus_n H^*(S^{[n]},\C).\]
Ideally, commutation relations between such operators
lead to properties of $F$ as a function of the generating variable
$q$.

The first of these structures are the \emph{Nakajima operators}. For
each cohomology class $c\in H^*(S,\C)$, Nakajima defines an operator
\[\alpha_k(c) : \cH_n \rightarrow \cH_{n-k},\]
by constructing a family of canonical correspondences defined below.
They collectively satisfy the relation
\[[\alpha_k(c),\alpha_l(c')] = k\delta_{k,-l}\lang c, c' \rang_S.\]
In addition to the commutation relation above, they have the advantage
that $\cH$ is an irreducible representation of the Heisenberg algebra
they generate.

Independently, Grojnowski constructed a different set of correspondences
on the Moduli spaces of torsion-free sheaves \cite{Groj} on $S$. Let
$\cM_{c_1,\ch_2}$ be the space of those sheaves $F$ with rank 1, $c_1(F) = c_1$,
$\ch_2(F) = \ch_2$. Then $\cM_{0,n}$ coincides with the Hilbert scheme, and more generally,
we have an identification $\cM_{c_1,\ch_2} \cong S^{[n]}$ where $n = \lang c_1,c_1\rang/2-\ch_2$
(see Nakajima \cite{Nak2}, chapter 9).

In this formulation, the bi-graded vector space
\[V = \bigoplus_{c,n} H^*(\cM_{c,n}).\]
inherits the extra structure of a \emph{vertex algebra}. We do not
discuss vertex algebras here, but remark that this structure gives operators
\[Y(x,z) \in \End(V,V)\otimes\C((z))\]
for each cohomology class $x \in V$. $\C((z))$ is the ring 
of formal Laurent series in $z$. Each operator admits an expression in
the Nakajima operators, and isomorphisms $Q^m : V_{c,n} \rightarrow V_{c+m,n}$ using
the identification with the Hilbert scheme. Conversely, the Nakajima operators may
be reconstructed from the vertex operators, so in some sense
the two approaches are equivalent.

However, the connection between the vertex operator $Y(x,z)$ and our vertex operator
is still mysterious. On one hand, the input into our operator is the first
Chern class of a line bundle $\cL$ on $S$. If
$S=\C^2$ with the torus action (\ref{idealsetup}), and we specialize
$x=1 \in H^0(\cM_{c_1(\cL),c_1^2/2})$, the two operators actually agree up to the operator $Q$.
On the other hand for general surfaces (for instance, any surface with nontrivial
canonical bundle $\cK$), our operator differs from the vertex 
operators $Y(x,z)$ by a $c_1(\cK)$ term. This causes asymmetric commutation
relations with the Heisenberg operators $\alpha_k$ that are not present in
actual vertex algebras.

\subsection{A Prototypical situation}

As we pointed out, one desires that the correlation functions (\ref{correlations})
admit an expression in terms of some of the natural operators on the Hilbert scheme.
In an ideal situation, properties (commutation relations) of the operators lead to 
functional properties of $F$ in the generating variable $q$. 
We show in theorem \ref{quasiT}, that the setup with
\begin{equation}
\label{idealsetup}
 S = \C^2,\quad T = \Cx,\quad z\cdot(x,y) = (zx,z^{-1}y)
\end{equation}
is such a situation.

The calculation proceeds as follows. The operator $\cH_n\rightarrow \cH_n$
given by $x\mapsto x \cup \ch_n(\cO/\cI)$ admits is expressible in the
vertex operators mentioned above. Unfortunately, there is no such expression
for the operator $x\mapsto x \cup e(T_mS^{[n]})$. But, if $|\mu \rangle$
is the element corresponding $1$ in $\cH_\mu$, and
\[\lang x, y\rang = \int x\cup y,\]
the matrix element
\begin{equation}
\label{me}
\langle x \cup e(T_{\mu t}S^{[n]}), x \rangle = \int \left(x \cup e(T_{\mu t}S^{[n]})\right) \cup x 
\end{equation}
is the matrix element $\langle Q^{-\mu} Y(|\mu \rangle, 1) x, x \rangle$. 
As a result, the correlation function takes the alternative form
\[\sum_n q^n \int_{S^{[n]}} c \cup e(T_mS^{[n]}) = \]
\[\Tr q^d (c \cup \_) Q^{-\mu} Y(|\mu\rangle,1),\]
where $d$ multiplies $\cH_n$ by $n$.
The point is that the operator $Y(|\mu\rangle,1)$ has off-diagonal terms (indeed, terms
between different Hilbert schemes), though they do not affect the trace. However,
we must include them if we are to exploit the commutation relations of
$Y(|\mu\rangle,1)$. The restriction to $m=\mu t$ where $\mu \in \Z$ is harmless
since the correlation function is a polynomial in $m$ for fixed $n$.
Furthermore, the vertex operator as constructed in this paper is defined for
all $m\in \C$.

As promised, the expression of $F$ as a trace of canonical operators
on $\cH$ leads to a functional properties of the correlation functions.
We prove in theorem \ref{quasiT} that, as a function of $q$,
$F(k_1,...,k_N;m,q)$ is a \emph{quasimodular form} in of a certain weight
depending on $k_j$. As a result, one may determine an integral 
over $S^{[n]}$ by simply determining which quasimodular form represents
$F$, and then extracting the coefficient of $q^n$.
This type of calculation is the prototype for all 
others of interest to us. Essentially every
construction in this paper is designed with this in mind.

\subsection{Outline}

With the model scenario above as motivation, we outline the format of this
paper. Section \ref{coho} summarizes of all necessary background on
the Hilbert scheme. This includes the Nakajima operators in both the equivariant
and non-equivariant case. It also includes the relationship between the Nakajima
operators, and the \emph{fixed point basis} of equivariant cohomology. We
give this description when $S = \C^2$ with the
action of the 2-torus $T^2\subset GL(2)$.

Section \ref{vo} describes the correlation functions in detail, and motivates
a geometric definition of the vertex operator, $\bW$, the central object of this paper.
This definition has several advantages:
\begin{enumerate}
\item $\bW$ incorporates the canonical class of $S$, where other studies assume
it vanishes. This leads to an asymmetry which is not present in standard vertex operators.
\item The coefficient (\ref{me}) is automatically a matrix element of $\bW$
for any $S$. This follows directly from definition, not a calculation.
\item $m$ need not be integral.
\item $\bW$ is defined as a characteristic class of a vector bundle.
Therefore it is likely that theorem \ref{mainT} generalizes to an operator
on $K$-theory.
\end{enumerate}
Section \ref{vo} concludes with our main result, which presents $\bW$
as an expression $\Gamma$ in the Nakajima operators. This expression justifies
the reference to $\bW$ as a vertex operator. The proof of theorem \ref{mainT}
is the content of section \ref{proof}.

Section \ref{quasi} applies $\bW$ to the correlation functions for (\ref{idealsetup}).
We use the commutation relations among vertex operators to prove 
the quasimodularity result mentioned above (theorem \ref{quasiT}). We give
explicit combinatorial expressions for the correlation functions, and connect
them with quasimodular forms in simple examples.

Finally, section \ref{sheaves} applies the vertex operator to the moduli space
$\overline{\cM(2,n)}$ of framed torsion-free sheaves of rank $r=2$ on $\Pt$. We show that $\bW$
applies the (dual) partition function $Z^{\vee}$ in the Nekrasov theory.
We present a theorem describing $Z^{\vee}$ as a vector-valued function in
the generating variable $q$ satisfying a twisted modularity property (theorem \ref{vectorT}). 

We will see that the partition function is considerably more complicated in higher rank. In fact,
the proof of theorem \ref{vectorT} relies on the \emph{principle vertex operator construction}
for the affine Lie algebra $\slc$. We do not yet have a geometric explanation for the appearance of
$\slc$, but it seems likely that a generalization of our operator connects
with the $\slc$ action described by Licata \cite{Lic}, and the oscillator action described
by Baranovsky \cite{Ba}. We present examples
for certain values of the parameter $m$ for which $Z^\vee$ may be classified by regular modular
forms.

\section{Cohomology of the Hilbert Scheme}
\label{coho}
Let $S$ be a smooth quasi-projective surface, possibly together with
an action of a complex torus, $T=(\Cx)^d$.  $S^{[n]}$ is a
complex manifold of dimension $2n$ by Fogarty's theorem. Furthermore,
the action of $T$ on $S$ induces an action on $S^{[n]}$, 
which has isolated fixed points if $S$ does.
Thus we may consider the cohomology
groups $H_T^*(S^{[n]},\C)$. We always consider cohomologies over $\C$ in this paper,
even though much of what follows makes sense over $\mathbb{Q}$. 

It is well understood that it is best to study the cohomologies of $S^{[n]}$ for $n \geq 0$
simultaneously instead of fixing $n$. This is foreshadowed by
the fact that the Betti numbers are best expressed as a generating function, called G\"{o}ttsche's
formula \cite{Goe2}:
\[\sum_{n=0}^\infty \sum_{k=0}^{4n} b_k(S^{[n]})p^{k}q^n = 
\prod_{n=1}^\infty \prod_{k=0}^4 (1-(-1)^k p^{k+2n-2}q^n)^{(-1)^{k+1}b_k(S)}.\]

However, the real reason is that the vector space
\begin{equation}
\cH = \bigoplus_{n \geq 0} H_T^*(S^{[n]})
\end{equation}
inherits extra structure, the \emph{Nakajima operators}.

\subsection{Nakajima's operators}
Define a correspondence on Hilbert schemes via the \emph{incidence variety}
in $S^{[n]}\times S \times S^{[n+k]}$ \cite{Nak1},
\[Z^{n,n+k} = \{(I,x,J) | J \subset I, \supp(I/J) = x\}.\] 
It is shown that $[Z]$ has dimension $2n+k+1$, and possesses a well-defined fundamental class 
$[Z]$ in $H^{4n+2k-2}_T(S^{[n]}\times S \times S^{[n+k]})$.

Given a cohomology class $c \in H^*_T(S)$, the \emph{Nakajima operator}
\[\alpha_{-k}(c) : H^*_T(S^{[n]}) \rightarrow H_T^{*+2k+\deg(c)-2}(S^{[n+k]})\]
is defined by
\begin{equation}
\alpha_{-k}(c)(y) = p_{3!}((p_2^*(c) \cup p_1^*(y)) \cup [Z])
\end{equation}
where $p_{3!}$ denotes the Gysin homomorphism for the proper map $p_3 : Z \rightarrow S^{[n+k]}$.

For $k>0$, we also define 
\begin{equation} \label{alphadual}
\alpha_k(c) = (-1)^{k+1} \alpha_{-k}(c)^*,
\end{equation}
where the dual operator is with respect to the inner product on $\cH$ given by
$\lang x,y\rang = \int_{S^{[n]}} x \cup y$. If $S$ is compact, this takes
$\cH$ to itself. If $S$ is noncompact with a group action, it takes
$\cF = \cH \otimes_R F$ to itself, where $R = H_T^*(pt) = \C[\Lie(T)^*]$, and 
$F=\C(\Lie(T)^*)$, the fraction field of $R$.
\begin{remark} Notice that this definition depends
on the inner-product being defined, which is acceptable since we assumed that integration
makes sense on our surface. However, it is possible to define the Nakajima operator
for $k>0$ if this is not the case by taking $c$ in cohomology with compact support.
This would be necessary, for instance, if we considered $S=\C^2$ with no
group action.
\end{remark}

Nakajima's main result \cite{Nak1} is that
\begin{equation}
[\alpha_{k}(c),\alpha_{l}(c')] = m\delta_{k,-l} \lang c,c'\rang.
\end{equation}
Here the bracket is the \emph{supercommutator} with respect to the grading given
by $\deg(c)$. If $S$ has no odd cohomology, it reduces to a normal commutator.
One can also express this theorem by saying that $\cH$ is a module over the Heisenberg
Lie algebra with generators $\alpha_k(c)$. Furthermore, using G\"{o}ttsche's formula,
one can prove that this module is irreducible. Therefore, if $|0\rangle$ represents 
the element $1 \in H^0(S^{[0]}) \subset \cH$, we obtain a basis of $\cH$ by
successive applications of $\alpha_{-k}(c)$ to $|0\rangle$.

\subsection{Equivariant cohomology of the Hilbert scheme}
Suppose now that $S$ is equivariant with isolated fixed points, though not necessarily compact.
The action of $T$ on $S$ induces an action on the Hilbert scheme $S^{[n]}$, also having isolated
fixed points. In this case, we have another canonical basis for $\cH$, the
\emph{fixed-point basis} for equivariant cohomology. Since
$T$ acts on $S^{[n]}$ with isolated fixed points, localization gives an 
injective map of $R$-modules,
\[i^* : H_T^*(S^{[n]}) \rightarrow H_T^*(\bigsqcup_{s \in S^{[n]}_T} s) \cong
\bigoplus_{s\ fixed} H_T^*(s) \cong \bigoplus_{s\ fixed} R_s\]
where $i$ is the inclusion $\bigsqcup_{s \in S^{[n]}_T} \{s\} \hookrightarrow S^{[n]}$.
Tensoring over $R$ with $F$, this map becomes an isomorphism. Thus, to
describe a basis of $\cF = F \otimes_R \cH$, we need to determine the fixed points
of $S^{[n]}$.

We describe the fixed points in the setup 
\begin{equation}
S = \C^2,\quad T = \Cx \times \Cx\circlearrowleft S, \quad (z_1,z_2) \cdot (x,y) = (z_1x,z_2y).
\label{csetup}
\end{equation}
A fixed point of $\Ctn$ as a manifold is given by a closed point of $\Ctn$ as a 
scheme. Using the functor of points, these are in bijection with ideals of $I \subset \C[x,y]$
such that $\dim_\C \C[x,y]/I = n$, and $I$ is invariant under $T$. It is not
difficult to see that invariant ideals are just those that are generated by
monomials $x^ry^s$. Such ideals are indexed by partitions 
$\lambda = (\lambda_1,\lambda_2,...)$ of $n$, the correspondence being
\[\lambda \mapsto I_\lambda = (x^{\lambda_1},x^{\lambda_2}y,...,y^{\ell(\lambda)})\]
where $\ell(\lambda)$ is the smallest non-negative integer with $\lambda_\ell \neq 0$. 
A $T$-equivariant basis of $\C[x,y]/I_\lambda$ is $x^iy^j$ over all $(i,j)$ 
in the diagram of $\lambda$. The basis element of $\cF_S$ corresponding to $\lambda$
is $[I_\lambda] = i_{\lambda!}(1)\in \cH$, where $i_\lambda : pt \rightarrow S^{[n]}$
is the inclusion of the fixed point $I_\lambda$.

If $S$ is not $\C^2$ but has isolated fixed points, we can restrict to the compact
torus $S^1 \times \cdots \times S^1 \subset \Cx \times \cdots \times \Cx$, and find disjoint
equivariant neighborhoods $U_s$ of each fixed point $s\in S$. Let
$U_s^{[n]} \subset S^{[n]}$ consist of points with support
in $U_s$. This induces an injection
\[\coprod_{\sum_s n_s = n} \quad \prod_{s\ fixed} U_s^{[n_s]} \hookrightarrow S^{[n]}\]
by taking the union of subschemes of $S$ with disjoint support. By the
localization theorem \cite{AB}, this induces an isomorphism
\begin{equation}
\label{sdecomp}
\cF_S \rightarrow \bigotimes_{s\ fixed} \cF_{T_sS}.
\end{equation}
This map is easy to describe in the fixed point basis: fixed points of $S^{[n]}$ are
indexed by assigning a partition $\mu_s$ to each fixed point $s\in S$, describing the ideal
sheaf $I_{\mu_s}$ inserted at $s$. The image of this point is just $\bigotimes_s [I_{\mu_s}]$.

\subsection{Fixed points and Jack polynomials}

We now describe the relationship between the fixed-point basis and the Nakajima basis
in setup (\ref{csetup}). For more details we refer to Wang, Li and Qin \cite{LQW_Jack}, though 
we use the normalizations of Okounkov and Pandharipande \cite{OP}.

Let $\alpha_{-k} = \alpha_{-k}(1)$, and $\alpha_{\mu} = \prod_i \alpha_{-\mu_i}$
for $\mu$ a partition. The the elements $\alpha_{\mu}|0\rangle$ form 
a basis of $\cF$ over $F$. In fact,
\[\cF \cong F[\alpha_{-1},\alpha_{-2},...]\]
as $F$-vector spaces. The obvious ring structure is unrelated to
the cup product ring structure on $\cF$, but the Nakajima operators become
\begin{equation}
\alpha_{-k} \mapsto (f \mapsto \alpha_{-k}f), \quad
\alpha_{k} \mapsto \frac{k}{t_1t_2}\frac{\partial}{\partial \alpha_{-k}}
\end{equation}
Notice that, like the fixed-point basis,
the Nakajima basis is also indexed by partitions.

The standard way to express the fixed point basis in terms of the Nakajima basis
uses the isomorphism
\begin{equation}
\cF \rightarrow \Lambda^*\otimes \C(t_1,t_2), \quad \alpha_{\mu}|0\rangle \mapsto p_\mu
\end{equation}
where $\Lambda^*$ is the ring of symmetric polynomials in infinitely many variables,
and $p_\mu = \prod_k p_{\mu_k}$, and $p_k = \sum_i x_i^k$. The connection
between $\cF$ and symmetric polynomials is deeper than the
role it plays in this paper (see Haiman \cite{H});
for now, it merely provides a convenient combinatorial description of the 
change-of-basis matrix.

Under this isomorphism, the inner product
\begin{equation}
\label{iprod}
\lang x, y\rang = \int_{S^{[n]}} x \cup y
\end{equation}
takes the form
\begin{equation}
\label{jprod}
\lang p_\mu, p_\lambda \rang_{t_1,t_2} = \frac{(-1)^{|\mu|-\ell(\mu)}}{(t_1,t_2)^{\ell(\mu)}\zee(\mu)}\delta_{\mu,\lambda}
\end{equation}
where $\zee(\mu) = (\prod_i \mu_i)(\prod_k |\{i|\mu_i = k\}|!)$. The fixed
point elements transform as
\begin{equation}
\label{jack}
[I_\mu] \mapsto \Jt_\mu = t_2^{|\mu|}P_\mu^{(-t_2/t_1)}|_{p_n=t_1p_n}.
\end{equation}
Here $P^{(\theta)}$ is the \emph{integral form} of the Jack polynomial \cite{Mac}, and we have
normalized $\Jt_\mu$ so that $\langle\Jt_{\mu},\ p_1^{|\mu|}\rangle = |\mu|!$.
The reason for this normalization is that the element $\frac{1}{n!}\alpha_{-1}^n|0\rangle$
is $1 \in H^0_T(S^{[n]})$, and
\[\int i_{\mu!}(1) \cup 1 = 1.\]

If $S$ is not $\C^2$, the Nakajima operators can be easily understood in
the decomposition (\ref{sdecomp}) over $F$. Given positive integers $m_s,n_s$
with $\sum m_s = m$, $\sum n_s = n$, the restriction of $Z^{m,m+k}$ to
$\prod_s \left(U_s^{[m_s]} \times U_s^{[n_s]}\right)$ is
\[\coprod_{s,n_s-m_s = k} \Delta \times \cdots \times Z^{m_s+k,k} \times \cdots \times \Delta\]
as sets. It follows that $\alpha_{-k}(c)$ decomposes as
\begin{equation}
\label{alphadecomp}
\alpha_{-k}(c) = \sum_{s} \left(1\otimes \cdots \otimes 
\alpha_{-k}\left(c|_{U_s}\right)\otimes \cdots \otimes 1\right).
\end{equation}

\section{The Vertex Operator}
\label{vo}

In this section we consider integrals of Chern classes of tautological bundles
on $S^{[n]}$, where $S$ is a smooth quasi-projective surface, possibly equipped
with an action of a complex torus. If $S$ is not projective, neither is $S^{[n]}$, and
we must consider equivariant integrals with respect to a torus action. 
We present a strategy for studying these integrals that motivates the introduction
of the vertex operator, the centerpiece of this paper.

\subsection{Strategy}

Consider the integral
\[\int_{S^{[n]}} c \cup  e(T_mS^{[n]}).\]
Here $c$ is a cohomology class, $T_mX$ is the tangent bundle to $X$ taken as an equivariant
vector bundle with respect to an action of $\Cx$ acting
trivially on $X$, but scaling the fibers of $TX$. $e(T_mS^{[n]})$ is
the equivariant Euler class of this equivariant bundle, and the parameter
here is the number $m \in \Lie(\Cx)$, hence the subscript $m$.

If $c=1$ and $S$ is as in (\ref{idealsetup}), this is one of
Nekrasov's deformed partition functions, reviewed below. As pointed out in the introduction, it is also a 
tidy way to organize numerical invariants of $S^{[n]}$:
\[e(T_mS^{[n]}) = \sum_j m^j c_{2n-j}(TS^{[n]}).\]
So if $S$ is projective and $c$ has cohomological degree
$k$, the coefficient of $[m^k]$ fills out what is left to give a top degree class.
This means the integral makes sense, and yields a number associated to each $c$.

We summarize the strategy in $3$ steps:

\begin{strategy}{\ref{strat}}
\label{strat}
\begin{enumerate}
 \item Make a wise choice of an element of $\cH \otimes \cH$ that pulls
back to $e(T_mS^{[n]})$ under $\Delta_{nn}^*$.
\item Using the standard inner product on cohomology 
$\lang a, b\rang = \int a \cup b$, this is the same as an operator
$\bW : \cH \rightarrow \cH$. It has the property that
\[\Tr\ (c \cup \_ ) \circ \bW = \int_{S^{[n]}} c \cup e(T_mS^{[n]})\]
A wise choice of $\bW$ admits an explicit expression in the Nakajima
operators.
\item Use the Nakajima commutation relations to study the trace. Alternatively,
compute the trace in the Nakajima basis.
\end{enumerate}
\end{strategy}

$\bW$ is the vertex operator.

\subsection{The Main theorem}

In fact, the vertex operator we construct is slightly more general. It
is well-known that the fiber of $TS^{[n]}$ at a point corresponding to 
an ideal $I$ is given by
\begin{equation}
T_IS^{[n]} \cong \chi_S(\cO,\cO)-\chi_S(I,I).
\end{equation}
Here 
\[\chi(F,G) = \sum_{i=0}^{2} (-1)^i \, \Ext^i(F,G),\]
and this is an isomorphism of virtual vector spaces. If $S$ is compact,
this is an alternating sum of finite-dimensional vector spaces, and we
expect the virtual dimension to be $2n = \dim(S^{[n]})$. If $S$ is
noncompact with a torus action fixing $I$, this represents a difference of infinite
vector spaces with finite-dimensional torus eigenspaces. Therefore the subtractions
are meaningful, and in fact we obtain the tangent space to $I$ as a virtual
representation of $T$, not just a virtual vector space.

Given a line bundle $\cL$ on $S$, we define an element
$\bE_{kl}(\cL) \in K^T(S^{[k]}\times S^{[l]})$ such that
\[\Delta_{nn}^*(\bE_{nn}(\cL))_{I} \cong T_I S^{[n]} \cong \chi_S(\cO,\cL)-\chi_S(I,I\otimes \cL).\]
If $\cL$ is a trivial bundle with an action of $\Cx$, we recover the class 
of $T_mS^{[n]}$.

Let $\bE_{kl}(\cL)$ be the virtual vector bundle on $S^{[k]} \times S^{[l]}$  with fiber
\begin{equation}
\label{E}
\bE\big|_{(I,J)} = \chi(\cO,\cL) - \chi(J,I\otimes\cL) 
\end{equation}
over a pair $(I,J)$ of ideal sheaves on $S$.
To be precise, let $\cI$, $\cJ$ be the canonical ideal sheaves on $S^{[k]}\times S$
and $S^{[l]}\times S$ respectively, given by the functor of points. Then define
\[E = (p_1\times p_3)_*(\cO^{\vee}\cdot \cO \cdot p_2^*(\cL)- 
\cJ^{\vee}\cdot \cI \cdot p_2^*(\cL)),\]
where $p_1,p_2,p_3$ are the projections onto $S^{[k]}$, $S$, $S^{[l]}$. 
We define $\bW(\cL)$ as the operator $\cF \rightarrow \cF$ (or $\cH \rightarrow \cH$ if $S$ is compact)
with matrix elements

\begin{equation}
  \label{defW}
 \left(\bW(\cL,z)\, p_1^*\eta, p_2^*\xi\right) = z^{l-k}\int_{S^{[k]} \times S^{[l]}} \eta \, \xi\, c_{k+l}(\bE) \,.
 \end{equation}
with $\eta \in H^*(S^{[m]})$,\ $\xi \in H^*(S^{[n]})$. If $S$ is compact, this is
an operator $\cH \rightarrow \C((z)) \otimes \cH$, where
$\C((z))$ is the ring of formal Laurent series in $z$. The coordinate $z$ is
necessary if we insist on defining $\cH$ as a direct sum, but later we
make use of its matrix elements when $z$ is assigned a numerical value.

As pointed out, if $\cL$ is the trivial bundle with an action of $u \in \Cx$ by scaling,
$\Delta_{nn}^* c_{2n}(E_{nn}) = e(\Delta^* E_{nn}) = e(T_mS^{[n]})$. Thus, $c_{k+l}(E_{kl})$ meets
the requirements of the cohomology class mentioned in the last subsection.
The first step in strategy (\ref{strat}) is then complete. The second is the content of our main theorem:

\begin{Theorem}
\label{mainT}
\begin{equation}
\label{mainE}
 \bW(\cL,z) = \Gamma(\cL,z) = \Gamma_-(\cL-\cK,z)\Gamma_+(\cL,z)
\end{equation}
where
\[\Gamma_{\pm}(\cL,z) = \exp\left(\sum_{n>0} \frac{z^{\mp n}}{n} \, \alpha_{\pm n}(\cL)\right)\]
\end{Theorem}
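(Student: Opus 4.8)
The plan is to characterize $\bW(\cL,z)$ by its (super)commutation relations with the Nakajima operators together with its value on the vacuum, and then to check that the right-hand side $\Gamma(\cL,z)$ satisfies the identical relations. Since $\cF$ (or $\cH$ when $S$ is compact) is an \emph{irreducible} module over the Heisenberg algebra generated by the $\alpha_k(c)$, an operator is determined once we know the family of commutators $[\alpha_{\pm k}(c),\,\cdot\,]$ and the image of $|0\rangle = 1 \in H^0(S^{[0]})$: every vector is obtained from $|0\rangle$ by creation operators, so we may push $\bW$ to the right past all $\alpha_{-k}$ using the commutators until it hits the vacuum. Thus the theorem reduces to two geometric computations and one formal check.

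First I would compute the action on the vacuum. Setting $k=0$ in \eqref{E}, the factor $I$ becomes $\cO$ and $\bE_{0l}|_{(\cO,J)} = \chi(\cO,\cL)-\chi(J,\cL) = \chi(\cO/J,\cL)$, an honest (non-virtual) class built from the finite-length sheaf $\cO/J$. The matrix elements \eqref{defW} then pair $\xi$ against $c_l(\chi(\cO/J,\cL))$, and the goal is to identify the resulting element of $\bigoplus_l H^*(S^{[l]})$ with $\Gamma_-(\cL-\cK,z)|0\rangle$; note $\Gamma_+(\cL,z)|0\rangle = |0\rangle$ since the $\alpha_{+n}$ annihilate the vacuum. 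This is where the canonical class first enters, through Serre duality $\Ext^2_S(F,G)\cong \Ext^0_S(G,F\otimes\cK)^\vee$ acting on the $\Ext^2$-part of $\chi(\cO/J,\cL)$.

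Second — and this is the crux — I would compute the commutators $[\alpha_{\pm k}(c),\bW(\cL,z)]$ at the level of correspondences and show that each is a \emph{scalar} multiple of $\bW$: up to the conventions of \eqref{alphadual}, that $[\alpha_{k}(c),\bW]$ is a multiple of $z^{k}\langle c,\cL-\cK\rangle\,\bW$ and $[\alpha_{-k}(c),\bW]$ a multiple of $z^{-k}\langle c,\cL\rangle\,\bW$ for $k>0$. The mechanism is that composing the incidence correspondence $Z^{n,n+k}$ with the bundle $\bE$ of \eqref{E} agrees with composing in the opposite order except for a boundary term supported where the added points meet the support being modified; evaluating that term produces the intersection number $\langle c,\cL\rangle$, while the Serre-dual $\Ext^2$ contribution supplies the extra $-\langle c,\cK\rangle$ on exactly one of the two sides. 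A short formal check then closes the argument: expanding $\Gamma = \Gamma_-(\cL-\cK,z)\Gamma_+(\cL,z)$ and using only $[\alpha_k(c),\alpha_l(c')] = k\delta_{k,-l}\langle c,c'\rangle$ gives $[\alpha_k(c),\Gamma] = z^{k}\langle c,\cL-\cK\rangle\,\Gamma$ and $[\alpha_{-k}(c),\Gamma] = -z^{-k}\langle c,\cL\rangle\,\Gamma$ for $k>0$, matching the geometric commutators; together with agreement on $|0\rangle$ this forces $\bW=\Gamma$.

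The main obstacle I anticipate is the geometric commutator computation, for two reasons. First, $\bW$ is not degree-preserving, so the commutator must track the grading shift recorded by the powers of $z$ in \eqref{defW}, and one must be careful with the sign conventions \eqref{alphadual} and with genuine supercommutators when $S$ carries odd cohomology. Second, the entire $\cK$-dependence — the only feature distinguishing $\Gamma_-(\cL-\cK,\,\cdot\,)$ from $\Gamma_-(\cL,\,\cdot\,)$ — arises from Serre duality and is \emph{invisible} on $S=\C^2$, where $\cK$ is trivial. Hence a localization-and-factorization argument through \eqref{sdecomp} and \eqref{alphadecomp}, though it would settle the toric cases cleanly, cannot by itself detect the canonical-class term; the computation must be carried out on a genuine surface, or equivalently via Grothendieck--Riemann--Roch applied to the pushforward defining $E$, with $\cK$ entering through the $\td(TS)$ factor and the $\Ext^2$ summand.
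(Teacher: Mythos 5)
Your formal framework is sound, and it is a genuinely different route from the paper's: since $\cF$ is an irreducible Heisenberg module, knowledge of $\bW|0\rangle$ (which is Lehn's formula, the same ingredient the paper cites in its remark) together with the commutators $[\alpha_{-k}(c),\bW(\cL,z)]=-z^{-k}\langle c,c_1(\cL)\rangle\,\bW(\cL,z)$ for all $k>0$ and all $c$ would indeed force $\bW=\Gamma$, and your formal computation of the corresponding commutators for $\Gamma$ is correct. The genuine gap is that this family of commutators \emph{is} the theorem: your proposal supports it only with the heuristic that composing $Z^{n,n+k}$ with $\bE$ ``agrees with composing in the opposite order except for a boundary term.'' Making that rigorous means controlling the restriction of the virtual class $\bE$ of \eqref{E} to the Nakajima correspondence $Z^{n,n+k}$ --- which is singular for $k\geq 2$ --- via the K-theoretic sequence relating $\cI_n$, $\cI_{n+k}$ and the punctual quotient, and then pushing Chern classes of the resulting K-theory differences through compositions of correspondences; the correction terms involve $\chi$ of the punctual quotient against the other ideal sheaf and are not visibly a scalar multiple of $\bW$ at any stage. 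No computation of this kind appears in the paper, whose entire architecture --- the universality reduction (lemma \ref{lehn}), the fixed-point factorization (lemma \ref{decomp}), and the induction on $(\ell(\mu),\ell(\nu))$ using compactness of $\Pp\times\Pp$ through relation \eqref{indrelation}, with Lehn's theorem and lemma \ref{striplemma} as base cases --- is designed precisely to avoid ever computing a commutator of $\bW$ with a Nakajima operator. As written, your plan reduces the theorem to an unproved geometric statement at least as hard as the theorem itself.

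There is also a concrete error in your final paragraph, and it matters because it is your stated reason for abandoning the paper's strategy. You claim the $\cK$-dependence is ``invisible on $S=\C^2$,'' so that a localization-and-factorization argument cannot detect it. But the reduction in lemmas \ref{lehn} and \ref{decomp} is to the \emph{equivariant} setup \eqref{csetup}, where the canonical bundle is equivariantly nontrivial: $c_1(\cK)=-(t_1+t_2)\neq 0$ in $H^2_T(\C^2)$. The canonical class is therefore fully visible after localization --- it is exactly the factor $z_1z_2$ in the Serre-duality relation of lemma \ref{Lhooks}, and the shift $m+t_1+t_2$ in the factor $\Gamma_-(m+t_1+t_2,z)$ of lemma \ref{striplemma}. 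It only becomes invisible on the one-dimensional subtorus $t_1=-t_2$ of \eqref{idealsetup}. So equivariant $\C^2$ does determine the universal polynomials of the EGL argument, $\cK$-terms included, and the obstacle you anticipated for the paper's route does not exist.
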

\begin{remark}
Serre duality on $S$ implies that
\[\chi(G,F\otimes \cL) = \left(\chi(F\otimes \cL,G)\otimes \cK\right)^*=
\chi(F,G\otimes\cK\otimes\cL^{-1})^*.\]
Since $c_k(\bE^*) = (-1)^kc_k(\bE)$, this shows that
\[\bW(\cL,z)^* = (-1)^d \bW(\cK-\cL,z^{-1}) (-1)^d,\]
which is immediately verified for $\Gamma(\cL,z)$, using (\ref{alphadual}).
\end{remark}
\begin{remark}
The theorem for $\bW(\cL,z)$ or $\bW(\cL,z)^*$ applied to $|0\rangle$ is already
known. In fact, it is a special case of a formula of Lehn: see \cite{L2}, theorem 4.6.
\end{remark}

\subsection{A Special case}

Before proving theorem, we demonstrate the statement in the setup (\ref{csetup}).
To do this, we need to calculate the matrix elements $(\bW(\cL)[I_\mu],[I_\lambda])$
in the fixed point basis. $\bW(\cL)$ is defined as a characteristic class,
so by the Atiyah-Bott localization formula and (\ref{E}), this is
\begin{equation}
\label{wfixed}
e(\bE|_{I_\mu,I_\lambda}) =
e(\chi(\cO,u\cdot\cO) - \chi(I_\lambda,u\cdot I_\mu))
\end{equation}
Where $e(V)$ is the product of the weights $\lambda \in \Lie(T)^*$ of
a representation $V$. To compute this coefficient, we need the character
of $\chi(\cO,\cO)-\chi(I_\lambda,I_\mu)$.
\begin{Lemma}\label{Lhooks} 
\begin{equation}
 \label{sum_hooks}
\ch\left(\bE\Big|_{(I_\lambda,I_\mu)}\right) = 
\sum_{\square\in \mu} z_1^{a_\lambda(\square)+1} \, z_2^{-l_\mu(\square)} +
\sum_{\square\in \lambda} z_1^{-a_\mu(\square)}\, z_2^{l_\lambda(\square)+1}, 
\end{equation}
where $a_\mu(\square), l_\mu(\square)$ denote the (possibly negative) arm and leg length in $\mu$.
\end{Lemma}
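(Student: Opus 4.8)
The plan is to evaluate the virtual $T$-representation $\bE|_{(I_\lambda,I_\mu)}$ at the fixed point, turn it into an explicit Laurent polynomial in the characters $z_1,z_2$, and match that polynomial against the arm-leg sum term by term. Since every line bundle on $\C^2$ is trivial I take $\cL=\cO$, so by (\ref{E}) the fibre is the virtual representation $\chi(\cO,\cO)-\chi(I_\mu,I_\lambda)$. I fix the weight convention so that the coordinate functions $x,y$ carry characters $z_1^{-1},z_2^{-1}$; then $\ch\cO=\ch\C[x,y]=\bigl((1-z_1^{-1})(1-z_2^{-1})\bigr)^{-1}$, and the monomial ideal $I_\lambda$ has $\ch I_\lambda=\ch\cO-Q_\lambda$ with $Q_\lambda=\sum_{(i,j)\in\lambda}z_1^{-j}z_2^{-i}$ the character of $\C[x,y]/I_\lambda$ (so $z_1$ records columns and $z_2$ rows). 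As a guardrail, the box $\lambda=\mu=(1)$ must return the tangent weights $z_1+z_2$ of $\C^2$ at the origin.

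The first real step is the master formula for equivariant Ext-characteristics on $\C^2$: for $T$-equivariant finitely generated $\C[x,y]$-modules $M,N$,
\[
\chi(M,N)=(1-z_1)(1-z_2)\,\overline{\ch M}\,\ch N ,
\]
where $\overline{\,\cdot\,}$ is the involution $z_i\mapsto z_i^{-1}$. I would prove this by choosing a finite $T$-equivariant free resolution $P_\bullet\to M$ and computing $\chi(M,N)=\sum_i(-1)^i\ch\operatorname{Hom}(P_i,N)$; since $\operatorname{Hom}(\cO,N)\cong N$ with the dual torus weight, the alternating sum reorganizes into $\overline{\ch M/\ch\cO}\,\ch N$, and $\overline{1/\ch\cO}=(1-z_1)(1-z_2)$. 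Equivalently this encodes the Koszul resolution of the diagonal of $\C^2$.

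Substituting $\ch I=\ch\cO-Q$, the $\overline{\ch\cO}\,\ch\cO$ terms cancel; with $(1-z_1)(1-z_2)\overline{\ch\cO}=1$ and $(1-z_1)(1-z_2)\ch\cO=z_1z_2$ the fibre collapses to the three-term Laurent polynomial
\[
Q_\mu+z_1z_2\,\overline{Q_\lambda}-(1-z_1)(1-z_2)\,\overline{Q_\lambda}\,Q_\mu .
\]
(Reading (\ref{E}) in the opposite argument order produces instead the Serre dual $z_1z_2\,\overline{(\,\cdot\,)}$ of this, by the Remark after Theorem \ref{mainT} together with $\cK_{\C^2}$ having weight $z_1^{-1}z_2^{-1}$; both are genuine arm-leg sums differing only by $\lambda\leftrightarrow\mu$, so I fix the labelling to land on (\ref{sum_hooks}).) It remains to identify this polynomial with the right side of (\ref{sum_hooks}).

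This last, purely combinatorial step carries the real content. My plan is a double telescoping (Abel summation) in the two torus directions: writing $\overline{Q_\lambda}\,Q_\mu$ as a sum over pairs of boxes $(s,t)\in\lambda\times\mu$, the prefactor $(1-z_1)(1-z_2)$ makes the double geometric series collapse along rows and columns, so that only boundary monomials survive, indexed by how far the rows and columns of $\lambda$ and $\mu$ extend. Recombining these with the linear terms $Q_\mu$ and $z_1z_2\overline{Q_\lambda}$, and reading off for each surviving box the overhang of $\lambda$'s rows against $\mu$'s boxes and of $\mu$'s columns against $\lambda$'s, produces exactly the exponents $a_\lambda(\square)+1,\,-l_\mu(\square)$ and $-a_\mu(\square),\,l_\lambda(\square)+1$. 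The main obstacle is precisely this bookkeeping: one must correctly handle boxes lying in one diagram but not the other, where the arm and leg lengths go negative, and pin down the $+1$ shifts and the assignment of $\lambda$ versus $\mu$ in each of the two sums. Should the direct telescoping prove unwieldy, the identity can instead be proved by induction on $|\lambda|+|\mu|$, adding a single addable box and checking that both sides change by equal amounts, with the diagonal case $\lambda=\mu$ (which must reproduce the standard tangent character of $\Ctn$) as a running consistency check.
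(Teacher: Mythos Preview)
Your proposal is correct and follows the standard route via the three-term expression $Q_\mu+z_1z_2\overline{Q_\lambda}-(1-z_1)(1-z_2)\overline{Q_\lambda}Q_\mu$, with the combinatorial telescoping (or induction) as the last step. The paper arrives at the same Euler-characteristic formula via localization and GRR, but then proceeds differently and in a way that sidesteps precisely the bookkeeping you flag as the main obstacle.

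Instead of working with the box-sum $Q_\lambda$, the paper parametrises $\ch(p_*I_\mu)$ column-by-column and $\ch(p_*I_\lambda)$ row-by-row, deliberately asymmetrically. After substitution the difference $\ch\chi(\cO,\cO)-\ch\chi(I_\lambda,I_\mu)$ becomes a single double sum $\sum_{i,j\ge1}z_1^{\lambda_i-j+1}z_2^{i-\mu_j^t}$ minus the empty version. The key observation is then one line: the terms with nonpositive $z_2$-exponent occur only in the first sum, are indexed exactly by boxes $(i,j)\in\mu$, and their exponents are already $a_\lambda(\square)+1$ and $-l_\mu(\square)$ by definition. The remaining terms (positive $z_2$-exponent) are not computed directly; instead Serre duality, in the form $\bE|_{(I_\mu,I_\lambda)}=z_1z_2\bigl(\bE|_{(I_\lambda,I_\mu)}\bigr)^\vee$, swaps the roles and converts them into the second sum over $\lambda$. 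So the paper never actually performs your telescoping or induction: the asymmetric row/column expansions plus the $z_2$-sign separation plus Serre duality do the combinatorics for free. Your approach is more hands-on and self-contained (no appeal to duality), while the paper's is slicker but relies on spotting the right asymmetric substitution.
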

\begin{proof}
By localization and Grothendieck-Riemann-Roch for the equivariant map $p:\C^2\rightarrow pt$,
\[\ch \chi(I_\mu,I_\lambda) = \int_{\C^2} \ch(I_\lambda^\vee \cdot I_\mu) \Td(\C^2)=\]
\[\frac{\overline{i_0^*\ch(I_\lambda)}i_0^*\ch(I_\mu)}{t_1t_2}\frac{t_1t_2}{(1-z^{-1})(1-z_2^{-1})} =
\label{chichar}\frac{\overline{\ch(p_* I_\lambda)}\ch(p_* I_\mu)\ch(p_* \cO)}{\overline{\ch(p_* \cO)}\ch(p_* \cO)} = \]
\begin{equation}
\label{chicalc}
\frac{\ch(p_* I_\mu)\overline{\ch(p_* I_\lambda)}}{\overline{\ch(p_* \cO)}}
\end{equation}
where $i_0$ is the inclusion of $0\in\C^2$, check denotes the dual,
and $\overline{f(t_1,t_2)} = f(-t_1,-t_2)$. Let
$\mu^t$ denote the transposed partition. Substituting
\[
\ch (p_* I_\mu) = \sum_{j\ge 1} \frac{z_1^{1-j} z_2^{-\mu^t_j} }{1-z_2^{-1}}\,,
\quad 
\ch (p_* I_\lambda) = \sum_{i\ge 1} \frac{z_1^{-\lambda_i} \, z_2^{1-i}}{1-z_1^{-1}}\, 
\]
into (\ref{chicalc}) yields
\[\ch \chi(\cO,\cO)-\ch \chi(I_\lambda,I_\mu) = \sum_{i,j\ge 1} z_1^{\lambda_i-j+1} z_2^{i-\mu_j^t}  - 
\sum_{i,j\ge 1} z_1^{1-j} \, z_2^{i} \,.\]
Now observe that the terms for which the exponent of $z_2$ is $\le 0$ occur only in the first sum and correspond to the first 
sum in \eqref{sum_hooks}. The remaining terms may be determined using Serre duality which implies
\[\bE\Big|_{(I_\mu,I_\lambda)}  =  z_1 z_2 \left(  \bE\Big|_{(I_\lambda,I_\mu)}\right)^\vee \,.\]
%
\end{proof}
The theorem now takes the purely combinatorial form
\[\lang \exp\left(\frac{m}{t_1t_2} \sum_n (-1)^{n+1} \frac{\partial}{\partial p_n}\right) \Jt_\mu,\ 
\exp\left(\frac{m+t_1+t_2}{t_1t_2} \sum_n \frac{\partial}{\partial p_n}\right) \Jt_\lambda\rang_{t_1,t_2} = \]
%
\begin{equation}
\label{combtheorem}
\prod_{\square \in \mu} (m+t_1a_\lambda(\square)+t_1-t_2l_\mu(\square))
\prod_{\square \in \lambda} (m-t_1a_\mu(\square)+t_2l_\lambda(\square)+t_2).
\end{equation}
The expression in (\ref{combtheorem}) is (\ref{wfixed}) using lemma \ref{Lhooks}.
\begin{ex}{$\mu=(2)$, $\lambda =(1,1).$}

We have
\[\Jt_{(2)} = t_1^2t_2^2p_1^2-t_1^2t_2p_2,\quad
\Jt_{(1,1)} = t_1^2t_2^2p_1^2-t_1t_2^2p_2.\]
Each term in the inner product in (\ref{combtheorem}) is
\[\exp\left(\frac{m}{t_1t_2} \sum_n (-1)^{n+1} \frac{\partial}{\partial p_n}\right)\Jt_{(2)} =\]
\[m(m-t_1)+(2mt_1t_2)p_1+(t_1^2t_2^2)p_1^2+(t_1^2t_2)p_2,\]
\[\exp\left(\frac{m+t_1+t_2}{t_1t_2} \sum_n \frac{\partial}{\partial p_n}\right)\Jt_{(1,1)} =\]
\[(m+t_1+2t_2)(m+t_1+t_2)+2t_2t_2(m+t_1+t_2)p_1+(t_1^2t_2^2)p_1^2+(-t_1t_2^2)p_2.\]
Taking the inner product (\ref{jprod}) gives
\[m(m+t_1)(m+t_1+t_2)(m-t_1+2t_2).\]
\end{ex}

As a warm up, we can now prove a special case of the theorem:
\begin{Lemma}
\label{striplemma}
Equation (\ref{combtheorem}) holds in the case $\mu = (1^k), \lambda = (l)$, $S$ and $T$
as in setup (\ref{csetup}), and $\cL = \C$ with a scaling action of $\Cx$.
\end{Lemma}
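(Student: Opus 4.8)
The plan is to evaluate both sides of (\ref{combtheorem}) explicitly for $\mu = (1^k)$ and $\lambda = (l)$ and check that they agree. First I would record the right-hand side. Reading the arm/leg data off Lemma \ref{Lhooks}: for the column $\mu=(1^k)$ one has $a_\lambda((i,1)) = \lambda_i - 1$ and $l_\mu((i,1)) = k-i$, while for the row $\lambda=(l)$ one has $a_\mu((1,j)) = 1-j$ and $l_\lambda((1,j)) = 0$. Substituting into the two products collapses the right-hand side to
\[ (m + t_1 l - t_2(k-1)) \; \prod_{s=0}^{k-2}(m - t_2 s) \; \prod_{s=0}^{l-1}(m + t_2 + t_1 s). \]
The pure-$k$ and pure-$l$ products are rising factorials (arithmetic progressions) in $m$, while the first factor is a single ``corner'' term coupling $k$ and $l$; recognizing this shape is what guides the whole computation.

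The key simplification is that for these two extreme partitions the Jack polynomials degenerate: the column state $\Jt_{(1^k)}$ is proportional to the elementary symmetric function $e_k$ (after the substitution $p_n \mapsto t_1 p_n$ of (\ref{jack})), and the row state $\Jt_{(l)}$ is proportional to the one-row Jack $g_l$, whose power-sum generating functions are $\sum_k e_k w^k = \exp(\sum_{r>0}(-1)^{r+1}p_r w^r / r)$ and $\sum_l g_l v^l = \exp(c\sum_{r>0}p_r v^r / r)$ for an explicit constant $c = c(t_1,t_2)$. Crucially, the sign pattern $(-1)^{r+1}$ of the elementary generating function matches the $(-1)^{n+1}$ in the first exponential operator of (\ref{combtheorem}), and the sign-free row generating function matches the sign-free second operator. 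This alignment is exactly what makes the strip case tractable; I would verify both degenerations against the worked data $\Jt_{(2)},\Jt_{(1,1)}$ already listed in the text.

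Given this, I would assemble the whole left-hand side, summed against $w^k v^l$, as a vacuum matrix element of a product of $\Gamma_\pm$-type vertex operators on $\cF$. Concretely, the generating functions of $\Jt_{(1^k)}$ and $\Jt_{(l)}$ realize the two states as $\Gamma_-$ applied to $|0\rangle$ (signed in $w$, sign-free in $v$); the two exponential differential operators are $\Gamma_+$-type operators, since the dictionary $\alpha_{-n}\mapsto p_n\cdot$, $\alpha_n \mapsto \tfrac{n}{t_1t_2}\partial_{p_n}$ turns $\sum_n \partial_{p_n}$ into an annihilation operator, with the second carrying weight $m + t_1 + t_2$ rather than $m$ precisely because of the $\cL - \cK$ shift in Theorem \ref{mainT} and the weight $-(t_1+t_2)$ of $\cK$ on $\C^2$. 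Converting the pairing $\lang\,,\,\rang_{t_1,t_2}$ into a vacuum expectation by adjunction leaves a product of vertex operators between $\langle 0|$ and $|0\rangle$, which I would normal-order by repeatedly applying the elementary commutation relation
\[ \Gamma_+(a,z)\,\Gamma_-(b,w) = (1 - w/z)^{-ab/(t_1t_2)} \, \Gamma_-(b,w)\,\Gamma_+(a,z), \]
a consequence of $[\alpha_n,\alpha_{-n}] = n/(t_1t_2)$, pushing every $\Gamma_+$ to the right where it annihilates $|0\rangle$. Each pass contributes a scalar $(1-\cdot)^{-\cdot}$, and extracting the coefficient of $w^k v^l$ from the product of such powers produces the rising-factorial (Pochhammer) products appearing on the right-hand side.

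The main obstacle is bookkeeping together with the corner factor. Several conventions must be tracked at once: the weights attached to each $\Gamma_\pm$, the substitution $p_n\mapsto t_1 p_n$, the normalization fixed by $\lang \Jt_\mu, p_1^{|\mu|}\rang = |\mu|!$, and the signs $(-1)^{|\mu|-\ell(\mu)}$ and factors $(t_1t_2)^{-\ell(\mu)}$ built into the inner product (\ref{jprod}); any sign or scaling slip propagates into every factor. More substantively, the two pure products will come from self-contractions within the column system and within the row system respectively, but the corner factor $m + t_1 l - t_2(k-1)$ --- the only term genuinely coupling $k$ and $l$ --- must arise from the single cross-contraction between the column $\Gamma_-$ and the row $\Gamma_+$. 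Confirming that this cross-contraction produces exactly that linear factor, with the correct $t_1 l$ and $-t_2(k-1)$ coefficients, is the crux; everything else is routine once the dictionary is fixed. As cross-checks I would specialize to $k=l=1$, where the formula must reduce to the Euler class $(m+t_1)(m+t_2)$ of $T_0\C^2$, and compare against the text's example $\mu=(2),\lambda=(1,1)$ under the transpose symmetry $t_1\leftrightarrow t_2$.
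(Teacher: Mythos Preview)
Your approach is sound and would succeed, but it is genuinely different from the paper's. The paper does not pass to generating functions in $k,l$ at all. Instead it inserts the orthogonal Nakajima basis $\{\alpha_\nu|0\rangle\}$ between $\Gamma_-(\cL-\cK,z)$ and $\Gamma_+(\cL,z)$ and observes that the matrix element $\langle\alpha_{-n}[I_\rho],[I_\sigma]\rangle$ vanishes unless $\rho\subset\sigma$; hence only $\nu\subset(1^k)\cap(l)$ contribute, namely $\nu=\emptyset$ and $\nu=(1)$. Each of the two resulting terms is evaluated from known ingredients: the Pieri rule for $\alpha_{-1}$ on $[I_{(1^j)}]$ and $[I_{(j)}]$, and the fixed-point expansion of $\Gamma_\pm(\,\cdot\,)|0\rangle$ (which is a special case of Lehn's formula already invoked in the remark). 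Adding the two terms and factoring gives the right-hand side directly.

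What each route buys: the paper's argument is short and leans on structure --- the column/row intersection forces a two-term sum, and the individual pieces are already in the literature --- so almost no bookkeeping is required. Your argument is more self-contained and mechanical: once the degenerations $\Jt_{(1^k)}\propto e_k$ and $\Jt_{(l)}\propto g_l$ are in hand, everything reduces to normal-ordering four $\Gamma_\pm$'s, with no appeal to Pieri or Lehn. One small correction to your narrative: the cross-contraction between the column and row systems will carry exponent $\pm 1$ (the weights are $t_1$ and $t_2$, and the commutator supplies a $1/(t_1t_2)$), so it contributes a factor $(1\pm wv)^{\pm 1}$ rather than the linear corner factor outright. Extracting $[w^kv^l]$ therefore produces a \emph{two}-term expression --- the generating-function shadow of the paper's $\nu=\emptyset$ and $\nu=(1)$ --- which then collapses to $(m+t_1 l - t_2(k-1))$ times the two rising products. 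With that adjustment your plan goes through; the sanity check $k=l=1\Rightarrow(m+t_1)(m+t_2)$ you propose is exactly the right safeguard against sign and normalization slips.
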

\begin{proof}
It is obvious from the definition of $Z^{n,n+k}$ that 
$\langle \alpha_{-n} [I_\mu], [I_\lambda] \rangle$ is zero unless
$\mu \subset \lambda$. Then
\[\langle \bW(\cL,z)[I_\mu],\ [I_\lambda]\rangle = \]
\[\sum_{\nu \subset \mu,\lambda} 
\frac{\langle [I_\mu],\ \Gamma_+(\cL,z)^*\alpha_\nu|0\rangle
\langle\Gamma_-(\cL-\cK,z) \alpha_\nu|0\rangle,\ [I_\lambda]\rangle}
{\langle \alpha_\nu|0\rangle,\alpha_\nu|0\rangle} =\]
\[\langle [I_\mu],\ \Gamma_+(m,z)^*|0\rangle
\langle\Gamma_-(m+t_1+t_2,z) |0\rangle,\ [I_\lambda]\rangle +\]
\[t_1t_2\lang [I_\mu],\ \Gamma_+(m,z)^*\alpha_{-1}|0\rang
\langle\Gamma_-(m+t_1+t_2,z) \alpha_{-1}|0\rangle,\ [I_\lambda]\rangle,\]
Since only $\emptyset$ and $[1]$ are contained in both $\mu$ and $\lambda$.
The cycle $Z^{n,n+1}$ is nonsingular for all $n$, so the operator $\alpha_{-1}$
is easy to understand in the fixed-point basis. We could either use an explicit
formula for the tangent bundle to $Z^{n,n+1}$ \cite{C}, or the Pieri rule for Jack polynomials \cite{Mac}.
The elements $\Gamma_-(\_,z)|0\rangle$, $\Gamma_+(\_,z)^*|0\rangle$ are Chern classes of bundles,
so they also have explicit expressions in the fixed-point basis:
\[\langle \alpha_{-1} [I_{\nu}],\ [I_{(1^{k+1})}]\rangle = 
(k+1)!t_2^k\prod_{j=0}^{k-1}(t_1-jt_2)\delta_{\nu,(1^k)},\]
\[\langle \alpha_{-1} [I_\nu],\ [I_{(l+1)}]\rangle = 
(l+1)!t_1^l\prod_{j=0}^{l-1}(t_2-jt_1)\delta_{\nu,(l)}\]
\[\langle [I_{(1^k)}],\ \Gamma_+(m,z)^*|0\rangle = z^{-k}\prod_{j=0}^{l-1}(m-jt_2)\]
\[\langle \Gamma_-(m+t_1+t_2,z)^*|0\rangle,\ [I_{(l)}]\rangle = z^{l}\prod_{j=0}^{k-1}(m+t_1+t_2+jt_2).\]
From localization,
\[\langle [I_{(1^k)}],\ [I_{(1^k)}]\rangle = k!t_2^k\prod_{j=0}^{k-1} (t_1-jt_2),\]
\[\langle [I_{(l)}],\ [I_{(l)}]\rangle = l!t_1^l\prod_{j=0}^{l-1} (t_2-jt_1).\]
Putting these together gives
\[\lang [I_{(1^k)}],\ \Gamma_+(m,z)^*\alpha_{-1}|0\rang = \]
\[\frac{\lang [I_{(1^k)}],\ \alpha_{-1}[I_{(1^{k-1})}]\rang 
\lang [I_{(1^{k-1})}],\ \Gamma_+(m,z)^*|0\rang}
{\langle [I_{(1^{k-1})}],\ [I_{(1^{k-1})}]\rangle} = 
kz^{-k}\prod_{j=0}^{k-1}(m-jt_2),\]
\[\lang [I_{(l)}],\ \Gamma_-(m+t_1+t_2,z)^*\alpha_{-1}|0\rang = 
lz^l\prod_{j=0}^{l-1}(m+t_1+t_2+jt_1).\]
Adding up each term yields
\[\langle \bW(\cL,z)[I_\mu],\ [I_\lambda]\rangle = 
z^{l-k}\prod_{j=0}^{k-1}(m-jt_2)\prod_{j=0}^{l-1}(m+t_1+t_2+jt_1)+\]
\[t_1t_2klz^{l-k}\prod_{j=0}^{k-2}(m-jt_2)\prod_{j=0}^{l-2}(m+t_1+t_2+jt_1) =\]
\[z^{l-k}(m+lt_1-(k-1)t_2)\prod_{j=0}^{k-2}(m-jt_2)\prod_{j=0}^{l-1}(m+t_2+jt_1)\]
which agrees with (\ref{combtheorem}).

\end{proof}

\subsection{Proof of the main theorem}
\label{proof}
Before proving theorem \ref{mainT}, we prove two lemmas that reduce our
considerations to setup (\ref{csetup}).
\begin{Lemma}
\label{lehn}
If theorem \ref{mainT} holds in the case where $S$ has a torus action with 
isolated fixed points, it holds for all smooth quasiprojective surfaces with
compact fixed loci (and therefore all smooth, projective varieties).
\end{Lemma}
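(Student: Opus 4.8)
The plan is to reduce the operator identity (\ref{mainE}) to a finite family of polynomial identities in the numerical invariants of the pair $(S,\cL)$, verify those on toric surfaces — where the hypothesis already supplies the theorem — and then extend to all surfaces by Zariski density. Concretely, $\bW(\cL,z)=\Gamma(\cL,z)$ is equivalent to the equality of all matrix elements of the two operators taken against Nakajima basis vectors $\alpha_{-\mu}(\gamma)|0\rangle$, with $\gamma$ ranging over a fixed basis of $H^*(S)$. First I would fix the partition data and the power of $z$, so that each side of the comparison becomes a single integral over a product of Hilbert schemes, depending on $(S,\cL,\cK)$ and on the inserted classes.

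The key step is \emph{universality}: each such matrix element is a universal polynomial in a finite list of intersection numbers of $S$, namely the pairwise pairings $\langle \gamma_i,\gamma_j\rangle_S$, the pairings of the $\gamma_i$ with $c_1(\cL)$ and $c_1(\cK)$, the numbers $\langle\cL,\cL\rangle$, $\langle\cL,\cK\rangle$, $\langle\cK,\cK\rangle$, and the Euler number $\int_S c_2(TS)$. This should follow because both sides are built from geometry that is local on $S$: Nakajima's incidence varieties $Z^{n,n+k}$, the tautological class $\bE$ of (\ref{defW}), and the Chern classes entering $\Gamma$ are all defined functorially from $S$, so after proper pushforward to a point the $S^{[n]}$ integrals reduce to intersection numbers on $S$ together with $\int_S c_2(TS)$. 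Since $S$ has complex dimension two, the only surviving invariants are the pairings above and this single Euler number. Consequently the theorem is equivalent to the vanishing of a universal polynomial $P_{\mu,\nu}$ in these invariants, and by the hypothesis $P_{\mu,\nu}$ vanishes whenever $S$ carries a torus action with isolated fixed points.

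It then remains to check that such surfaces realize a Zariski-dense set of values of these invariants. Products of projective lines, Hirzebruch surfaces, and their iterated blow-ups at fixed points have isolated fixed points and vary $\langle\cK,\cK\rangle$ and $\int_S c_2(TS)$, but only along the Noether line $\langle\cK,\cK\rangle+\int_S c_2(TS)=12\chi(\cO_S)$; to break this single relation I would also allow disjoint unions of such surfaces, which still carry a torus action with isolated fixed points while varying $\chi(\cO_S)$ additively, so that the realized invariants meet infinitely many parallel affine hyperplanes and are therefore dense. Toric line bundles $\cO(D)$ simultaneously sweep out $\langle\cL,\cL\rangle$ and $\langle\cL,\cK\rangle$, with full freedom in the intersection data of the inserted classes. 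A universal polynomial vanishing on a Zariski-dense set vanishes identically, so $P_{\mu,\nu}\equiv 0$, giving the theorem for all smooth quasiprojective $S$ with compact fixed loci, and in particular for all smooth projective surfaces. The main obstacle is the universality step: one must argue carefully that the matrix elements depend only on the listed finite numerical data and are genuinely polynomial in them — this is exactly where the local structure of Nakajima's correspondences and the definition of $\bE$ enter — and then confirm that the comparatively rigid class of toric (and disjoint-union-of-toric) surfaces is nonetheless rich enough to make those data Zariski dense.
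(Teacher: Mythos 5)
Your proposal follows the same skeleton as the paper's proof---reduce the operator identity to a statement about universal polynomials by the method of Ellingsrud--G\"ottsche--Lehn, then use the toric case (where the hypothesis supplies the theorem) to determine those polynomials---but the two implementation choices differ, and they are exactly where the content lies. First, the paper does not test $\bW$ against the Nakajima basis $\alpha_{-\mu}(\gamma)|0\rangle$; it tests against products of the descendent classes $\sigma_i(c)=\int_S \ch_{i+2}(\cI)\cup c$, which span $H^*(S^{[n]})$ by \cite{LQW}. This choice is precisely what makes your self-identified ``main obstacle'' (the universality step) go through automatically: $\bW$ is defined in (\ref{defW}) through the universal ideal sheaves, so by Grothendieck--Riemann--Roch both it and the $\sigma$-classes are integrals over $S$ of polynomials in $\ch_p(\cI_k)$, $\ch_p(\cI_l)$, $\ch_p(\cL)$, $\Td(S)$, and Proposition 3.1 of \cite{EGL} applies directly. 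On your route, universality of matrix elements taken against Nakajima classes is not immediate, since the incidence varieties $Z^{n,n+k}$ do not sit a priori in the tautological framework; you would need Lehn/Li--Qin--Wang-type expressions of the $\alpha_k$ in tautological classes as an additional ingredient, so you have traded a standard step for a nontrivial one. Second, the paper stops its reduction at the level of universal polynomials in Chern classes pulled back to $S\times S_1\times\cdots\times S_{k+l+N_\eta+N_\xi}$, and disposes of the comparison by asserting that no universal relation among those classes exists; you push the reduction further, down to intersection numbers, which is what makes the Noether-line problem ($\langle\cK,\cK\rangle+\int_S c_2(TS)=12$ for every connected toric surface) visible at all. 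Your fix---allowing disjoint unions of toric surfaces, which still carry torus actions with isolated fixed points and realize the parallel lines $\langle\cK,\cK\rangle+\int_S c_2(TS)=12k$, hence a Zariski-dense set of invariants---is correct, and is in fact a more explicit justification of the ``toric surfaces suffice'' step than the paper's bare claim that such a relation does not exist. In short: same strategy, but the paper's choice of spanning classes closes the step you leave open, while your density argument makes explicit a point the paper leaves implicit.
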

\begin{proof}
We prove the equality of $W(\cL)$ and $\Gamma(\cL)$ in the general case
by looking at a suitable class of matrix elements of each.
As before, let $\cI_n$ on $S^{[n]} \times S$ denote the canonical ideal
sheaf provided by the functor of points. Given $c \in H^j(S)$, let
\[
\sigma_i(c) = \int_S \ch_{i+2}(\cI) \cup c \quad \in H^{2i+j}(S^{[n]}) \,.
\]
It is proved, for instance in \cite{LQW} that polynomials in
these classes span $H^*(S^{[n]})$, as a vector space. Therefore, it
suffices to check the equality on the matrix elements
\begin{equation}
\label{matrix_coeff}
\left(\bW(\cL) \prod_{i=1}^{N_\eta} \sigma_{p_i}(\eta_i), \prod_{j=1}^{N_\xi} \sigma_{q_j} (\xi_j)\right),
\end{equation}
where $p_i$, $q_i$ are some integers. The order in the product here matters up to a
change in sign, since the cup product is supercommutative.

By Grothendieck-Riemann-Roch, the element $\bW(\cL) \in H^*(S^{[k]}\times S^{[k]})$
is the integral over $S$ of some polynomial in the classes
\[\left\{\ch_i(\cI_k),\ch_i(\cI_l),\ch_i(\cL),\Td(S)\right\}\]
on $S^{[k]}\times S^{[l]} \times S$. Then the matrix element (\ref{matrix_coeff}) is
the integral over
\[S^{[k]}\times S^{[l]} \times S \times S_1 \cdots \times S_{N_\eta+N_\xi}\]
of a polynomial $f$ of
\[\left\{pr^*_{S^{[k]}\times S}\ch_p(\cI_k),pr^*_{S^{[l]}\times S}\ch_p(\cI_l),
pr^*_{S}\ch_p(\cL),pr^*_{S}\Td(S),pr_{S_i}^*\eta_i,pr_{S_{N_\eta+i}}^*\xi_j \right\},\]
with each $S_i$ a different copy of $S$.

The method of Ellings\"{u}d, G\"{o}ttsche, and Lehn (\cite{EGL}, Proposition 3.1), shows that this 
becomes the integral of another polynomial $\tilde{f}$ depending only on $f$ on
\[S\times S_1 \times \cdots \times S_{k+l+N_\eta+N_\xi}\]
in the variables
\begin{equation}
\label{canonclasses}
\left\{\ch_p(pr^*_{ij}\cO_\Delta), \ch_p(pr^*_{i}T_S),\eta_i,\xi_j).\right\}
\end{equation}
This statement is 
also clearly true when we replace $\bW$ with $\Gamma$ in (\ref{matrix_coeff}).
Therefore, to check the equality $\bW(\cL) = \Gamma(\cL)$, it suffices to 
verify the equality of the polynomials in either case.

Checking the identity for every toric surface is enough to 
determine the coefficients. If there were two polynomials $\tilde{f}_1$,
$\tilde{f}_2$, then
\[\int_{S\times\cdots \times S} \tilde{f}_1-\tilde{f}_2 = 0\]
for all toric surfaces $S$, and there would be a universal relation among Chern
classes of the sheaves in (\ref{canonclasses}). Such a relation does not exist.

\end{proof}

Let $T$ act on $S$ with isolated fixed points.
\begin{Lemma}
\label{decomp}
In this decomposition,
\begin{enumerate}
\item $\bW_S(\cL) = \bigotimes_{s\in S^T} \bW_{T_s S}\left(\cL\big|_{\C^2_s}\right)$
\label{Wdecomp}
\item $\Gamma_S(\cL) = \bigotimes_{s\in S^T} \Gamma_{T_s S}\left(\cL\big|_{\C^2_s}\right)$
\label {Gdecomp}
\end{enumerate}

\end{Lemma}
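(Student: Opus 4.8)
The plan is to prove both factorization statements by reducing them to the localization isomorphism (\ref{sdecomp}) and the known behavior of the constituent objects under that isomorphism. The statement concerns two operators on $\cF_S$, and the decomposition $\cF_S \cong \bigotimes_{s} \cF_{T_s S}$ is already in hand from (\ref{sdecomp}). So the task is to verify that each operator respects this tensor product structure, factoring into a tensor product of the corresponding operators on the local factors $\cF_{T_s S} = \cF_{\C^2_s}$, with $\cL$ replaced by its restriction $\cL|_{\C^2_s}$ to the formal/analytic neighborhood of the fixed point $s$.

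For part (\ref{Gdecomp}), the factorization of $\Gamma_S(\cL)$, I would argue directly from the definition $\Gamma(\cL,z) = \Gamma_-(\cL-\cK,z)\Gamma_+(\cL,z)$ together with the already-established decomposition (\ref{alphadecomp}) of the Nakajima operators,
\[
\alpha_{-k}(c) = \sum_s \left(1\otimes\cdots\otimes \alpha_{-k}(c|_{U_s})\otimes\cdots\otimes 1\right).
\]
The point is that under this decomposition the summands for distinct fixed points $s$ act on distinct tensor factors and hence \emph{commute}. Therefore an exponential of a sum $\sum_s \alpha_{\pm k}^{(s)}$ factors as a tensor product of exponentials, $\exp(\sum_s A_s) = \bigotimes_s \exp(A_s)$, and the same holds for $\Gamma_\pm$. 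The class $\cL$, $\cK$, and hence $\cL-\cK$ all restrict to the local factors, so $\Gamma_\pm(\cL,z)$ factors as $\bigotimes_s \Gamma_\pm(\cL|_{\C^2_s},z)$; one must check that $\cK_S$ restricts to $\cK_{\C^2_s}$ on a neighborhood of a fixed point, which is immediate since restriction of line bundles is compatible with the open immersion $U_s \hookrightarrow S$. The care needed here is purely bookkeeping: matching the $z$-gradings ($z^{l-k}$ factors) across the tensor factors, and tracking the Koszul signs in the supercommutator, though for the surfaces at hand (and certainly in setup (\ref{csetup}) where there is no odd cohomology) these signs are trivial.

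For part (\ref{Wdecomp}), the factorization of $\bW_S(\cL)$, I would argue geometrically from the definition of $\bE_{kl}(\cL)$ in (\ref{E}) and the localized incidence geometry. The operator $\bW_S(\cL)$ is built from $c_{k+l}(\bE)$, an equivariant Euler/Chern class, and the matrix elements in the fixed point basis are computed by Atiyah–Bott localization as in (\ref{wfixed}), namely by the equivariant Euler class of $\bE$ restricted to a pair of fixed points. A fixed point of $S^{[n]}$ is, by the description following (\ref{sdecomp}), a choice of partition $\mu_s$ at each fixed point $s \in S^T$, and the virtual tangent/Ext character $\chi(\cO,\cL) - \chi(J,I\otimes\cL)$ splits as a \emph{sum} over the fixed points $s$ of the corresponding local characters on $\C^2_s$ — this is because the Ext groups between sheaves supported in disjoint neighborhoods $U_s$ vanish, so the local-to-global Ext contributions are block-diagonal across the fixed points. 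Since $c_{k+l}(\bE)$ at a fixed point is the \emph{product} of weights of $\bE$, and $\bE$ decomposes as a direct sum over $s$, the Euler class factors as a product over $s$, which is exactly the statement that the operator $\bW_S$ factors as a tensor product.

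The main obstacle I anticipate is the vanishing-of-cross-terms claim in part (\ref{Wdecomp}): one must verify that $\chi_S(J, I\otimes\cL)$ genuinely has no contributions mixing sheaves supported at distinct fixed points, i.e. that $\Ext^i_S(\cO_{U_s}\text{-part}, \cO_{U_{s'}}\text{-part}\otimes\cL) = 0$ for $s\neq s'$. This follows because the supports are disjoint closed subschemes and $\sExt$ sheaves are supported on the intersection of supports, but it should be stated carefully at the level of the relative $\Ext$ complex $(p_1\times p_3)_*(\cJ^\vee\cdot\cI\cdot p_2^*\cL)$ over the product of Hilbert schemes, using that the diagonal contributions localize to a neighborhood of each $s$. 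Once this block-diagonality is established, everything else is the formal passage from "virtual bundle is a direct sum over $s$" to "Euler class is a product over $s$" to "operator is a tensor product," together with the observation that parts (\ref{Wdecomp}) and (\ref{Gdecomp}) are then consistent because the main theorem \ref{mainT} already identifies $\bW$ and $\Gamma$ on each local factor $\C^2_s$.
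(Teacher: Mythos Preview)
Your proposal is correct and follows essentially the same approach as the paper, only with far more detail: the paper's proof is two sentences, saying that part~(\ref{Wdecomp}) ``follows from the naturality property of characteristic classes'' and part~(\ref{Gdecomp}) ``follows from (\ref{alphadecomp}).'' Your argument for (\ref{Gdecomp}) via the commutativity of the summands in (\ref{alphadecomp}) and the resulting factorization of the exponential is exactly what that one-line reference unpacks to, and your argument for (\ref{Wdecomp}) --- that $\bE$ decomposes as a direct sum over fixed points because Ext between sheaves with disjoint support vanishes, whence the Euler class factors as a product --- is the content behind the paper's appeal to naturality. The cross-term vanishing you flag as the main obstacle is genuinely the point to check, but it is immediate from the support condition on local $\sExt$, as you note.
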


\begin{proof}
(\ref{Wdecomp}) follows from the naturality property of characteristic classes.
(\ref{Gdecomp}) follows from (\ref{alphadecomp}). 
\end{proof}
From these two lemmas, it is sufficient
to prove the theorem in the setup (\ref{csetup}).
However, in the proof, we still appeal to other
surfaces. $\bW(\cL)$ is defined in terms of a localization integral, which is
equal to an honest integral in the case where $S$ is projective. This fact
by itself imposes relations on $\bW(\cL)$ which lead to an induction step.

\begin{proof}

By lemmas \ref{lehn} and \ref{decomp}, it suffices to prove the theorem for (\ref{csetup}).
However, we will make use of relations on $\bW(\cL)$ coming from
the fact that equivariant integrals of top degree classes on compact spaces 
are just numbers that correspond to normal integrals. Consider the following setup:
\[S=\Pp \times \Pp\,,\quad T=\Cx\times \Cx\times \Cx\,,\quad \cL = \C,\]
where the group action on $\Tot(\cL) \cong \Pp \times \Pp \times \C$ is
\[(z_1,z_2,u)\cdot (x_1,x_2;s) = (z_1x_1,z_2x_2;us).\]

On $\Pp \times \Pp$ we have equivariant cycles
\[L_1^0 = \{0\} \times \Pp, \quad L_1^\infty = \{\infty\} \times \Pp, \quad L_2^0 = \{0\} \times \Pp,
 \quad L_2^\infty = \{\infty\} \times \Pp. \]
Also let $U_{ab} \equiv \C^2$ be the $T$-equivariant chart of $\Pp \times \Pp$ by
containing the point $a,b \in \{0,\infty\}$
Given partitions $\mu$ and $\nu$ such that $|\mu|=k$ and $|\nu|=l$, define
\begin{equation}
w_{\mu\nu} = \left(\bW_S(\cL) \prod \alpha_{-\mu_i}(L_1)  \lv0 \rang, \prod \alpha_{-\nu_i}(L_2)  \lv0 \rang\right) \in \Z\,,
\label{wmn}
\end{equation}
\begin{equation}
w^{[ab]}_{\mu\nu} = \left(\bW_{U_{ab}}(\cL) \prod \alpha_{-\mu_i}(L_1^a)  \lv 0\rang, 
\prod \alpha_{-\nu_i}(L_2^b)  \lv 0\rang\right) \in \C(t_1,t_2,m),
\label{wabmn}
\end{equation}
and similar quantities $g_{\mu\nu}, g^{[ab]}_{\mu\nu}$ replacing $\bW$ with $\Gamma$.
Notice that $w_{\mu\nu}$ may be computed either as a non-equivariant integral, or
using localization. To compute it using localization, we must make a choice of
$L_\epsilon^0$ versus $L_\epsilon^\infty$ for each occurence of
$L_\epsilon$ in the expression (\ref{wmn}), though the answer is
independent of this choice. This corresponds to a decomposition 
$\mu = \mu^{[0]}\sqcup \mu^{[\infty]}$, $\nu = \nu^{[0]}\sqcup \nu^{[\infty]}$.
Using all three parts of lemma \ref{decomp} we arrive at an expression relating
$w_{\mu\nu}$ and $w_{\mu\nu}^{[ab]}$,
\begin{equation}
\label{indrelation}
 w_{\mu\nu} = \sum_{\mu^{[00]},\mu^{[0\infty]},\mu^{[\infty0]},\mu^{[\infty\infty]}}\quad
\sum_{\nu^{[00]},\nu^{[0\infty]},\nu^{[\infty0]},\nu^{[\infty\infty]}}
\prod_{a,b} w^{[ab]}_{\mu^{[ab]}\nu^{[ab]}}
\end{equation}
where the sum is over terms such that
$\mu^{[a0]} \sqcup \mu^{[a\infty]} = \mu^{[a]}$, 
$\nu^{[0b]} \sqcup \nu^{[\infty b]} = \nu^{[b]}$.
The same relation holds replacing $w$ with $g$.
We can now prove by induction on $\ell(\mu)$, $\ell(\nu)$ that $w^{[ab]}_{\mu\nu} = g^{[ab]}_{\mu\nu}$.
As noted above, the case $w^{[00]}_{\mu\nu} = g^{[00]}_{\mu\nu}$ proves the theorem. 

Assume that
$\ell(\mu) > 1$ or $\ell(\nu) > 1$, and that $w^{[ab]}_{\mu'\nu'} = g^{[ab]}_{\mu'\nu'}$ for
$(\ell(\mu'),\ell(\nu')) < (\ell(\mu),\ell(\nu))$. Choosing $\mu^{[0]} = \mu$, $\mu^{[\infty]} = \emptyset$, 
$\nu^{[0]} = \nu$, $\nu^{[\infty]} = \emptyset$  and 
solving (\ref{indrelation}) for $w^{[ab]}_{\mu\nu}$ yields a function 
of $w_{\mu\nu}$, $w^{[**]}_{\mu'\emptyset}$,
$w^{[**]}_{\emptyset\nu'}$, and $w^{[**]}_{\mu'\nu'}$ with 
$(\ell(\mu'),\ell(\nu')) < (\ell(\mu), \ell(\nu))$. On the other hand, any other choice
of $\mu^{[a]}$, $\nu^{[a]}$ expresses $w_{\mu\nu}$ in the lower order terms
$w_{\mu'\nu'}$. Combining these relations, we arrive at an expression of
$w^{[ab]}_{\mu\nu}$ in lower order terms $w^{[**]}_{\mu'\nu'}$ which also
holds for $g$. By induction, we are done.

What remains are the base cases $\mu = \emptyset$, $\nu = \emptyset$, and
$(\ell(\mu),\ell(\nu)) = (1,1)$. As noted in the above remark, the first two are proved by
Lehn's theorem. As for the third, it is in fact enough to demonstrate the claim
between any collection of elements $(\xi_k, \eta_l) \in S^{[k]}\times S^{[l]}$ 
that have nonzero inner product with $\alpha_{-k}|0\rangle$ and $\alpha_{-l}|0\rangle$.
This is done in lemma \ref{striplemma}.

\end{proof}

\section{Correlation Functions and Quasimodularity}
\label{quasi}
In this section, we investigate the aforementioned integrals on the Hilbert
scheme. We remain true to our strategy, and compute these integrals as
traces in the Nakajima basis. We eventually specialize to setup
(\ref{idealsetup}) mentioned in the introduction. We then prove
that the correlation functions are \emph{quasimodular forms}, and show
how this can be used to compute them.

\subsection{The Partition function}

To begin, suppose that the cohomology class
$c$ that we are integrating against is $1$. We use the following lemma,
\begin{Lemma} \label{CR}
We have the following commutation relations between vertex operators:
\begin{enumerate}
\item $[\Gamma_+(\cL,x),\ \Gamma_+(\cL',y)] = [\Gamma_-(\cL,x),\ \Gamma_-(\cL',y)] = 0$
\item $\Gamma_+(\cL,x)\Gamma_-(\cL',y) = 
(1-\frac{y}{x})^{-\lang c_1(\cL),c_1(\cL')\rang}\ \Gamma_-(\cL',y)\Gamma_+(\cL,x)$
\end{enumerate}
\end{Lemma}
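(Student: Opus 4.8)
The plan is to prove these commutation relations for the vertex operators $\Gamma_\pm$ directly from their definition in terms of the Nakajima Heisenberg operators, using only the commutation relation $[\alpha_k(c),\alpha_l(c')] = k\,\delta_{k,-l}\langle c,c'\rangle$ established by Nakajima. Since $\Gamma_\pm(\cL,z) = \exp\left(\sum_{n>0} \frac{z^{\mp n}}{n}\alpha_{\pm n}(\cL)\right)$, each operator is the exponential of a sum of creation (resp.\ annihilation) operators, and the whole statement reduces to an application of the Baker--Campbell--Hausdorff formula in its simplest, Heisenberg form.

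For part (1), I would observe that $\Gamma_+(\cL,x)$ and $\Gamma_+(\cL',y)$ are exponentials of elements lying in the abelian subalgebra spanned by $\{\alpha_n : n>0\}$, since $[\alpha_k(c),\alpha_l(c')]=0$ whenever $k,l$ have the same sign (the Kronecker delta $\delta_{k,-l}$ forces opposite signs for a nonzero bracket). Hence the two exponents commute, and so do their exponentials; the identical argument applies verbatim to the pair $\Gamma_-(\cL,x),\Gamma_-(\cL',y)$. This part is essentially immediate and requires only the sign bookkeeping in the Heisenberg relation.

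For part (2), the key computation is the single commutator of the exponents. Writing $A = \sum_{n>0}\frac{x^{-n}}{n}\alpha_n(\cL)$ and $B = \sum_{m>0}\frac{y^{m}}{m}\alpha_{-m}(\cL')$, I would compute
\[
[A,B] = \sum_{n,m>0} \frac{x^{-n}y^m}{nm}\,[\alpha_n(\cL),\alpha_{-m}(\cL')]
= \sum_{n>0} \frac{1}{n}\Big(\frac{y}{x}\Big)^{n}\langle c_1(\cL),c_1(\cL')\rangle,
\]
using $[\alpha_n(\cL),\alpha_{-m}(\cL')] = n\,\delta_{n,m}\langle c_1(\cL),c_1(\cL')\rangle$. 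This scalar is central (a multiple of the identity), so the BCH formula collapses to $e^A e^B = e^{[A,B]} e^B e^A$, i.e.\ $\Gamma_+(\cL,x)\Gamma_-(\cL',y) = e^{[A,B]}\,\Gamma_-(\cL',y)\Gamma_+(\cL,x)$. Recognizing the series $\sum_{n>0}\frac{1}{n}(y/x)^n = -\log(1-y/x)$, the scalar prefactor becomes $\exp\left(-\langle c_1(\cL),c_1(\cL')\rangle\log(1-y/x)\right) = (1-y/x)^{-\langle c_1(\cL),c_1(\cL')\rangle}$, which is exactly the claimed factor.

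The only genuinely delicate point is convergence and the formal-series interpretation: the expansion of $(1-y/x)^{-\langle c_1(\cL),c_1(\cL')\rangle}$ must be understood as a formal power series in $y/x$, so the identity lives naturally in the ring of formal Laurent series and the manipulations are legitimate as identities of operators on each graded piece $\cH_n$ (where only finitely many terms act nontrivially). I expect the main obstacle to be purely notational — making the supercommutator sign conventions consistent with the grading by $\deg(c)$ when $S$ has odd cohomology — but in the intended applications the bracket is an ordinary commutator and the centrality of $[A,B]$ makes the BCH step exact rather than merely leading-order.
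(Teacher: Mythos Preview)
Your proposal is correct and is precisely the argument the paper has in mind: the paper's own ``proof'' consists of the single sentence that the lemma is obvious from the Nakajima commutation relations, and you have simply written out the standard Heisenberg/BCH computation that justifies that remark. There is no alternative approach to compare here; your write-up is a faithful and complete expansion of what the paper leaves implicit.
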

Some explanation is necessary. First, if $x,y$ are viewed as formal variables,
the expression $(1-\frac{y}{x})^a$ must be taken as a formal series in
$\frac{y}{x}$. If we assign values to $x$ and $y$, the operator $\Gamma_{\pm}(\cL)$
does not take values in $\cF$. However, if we are interested the trace
of this operator, we only need its matrix elements. Matrix elements make perfect sense
for numerical choices of the variables as long as $x > y$ in the lemma. The proof 
of the lemma is obvious from the Nakajima commutation relations.
\qed

We may now calculate one of our integrals. Let $\cF_n$ be the graded 
component $F\otimes_R H^*(S^{[n]})$ in $\cF$, and $d$ the operator
that multiplies $\cF_n$ by $n$. Then for $|q| < 1$,
\[\sum_n q^n \int_{S^{[n]}} e(T_mS^{[n]}) = \Tr q^d \Gamma_-(\cL-\cK,z)\Gamma_+(\cL,z) = \]
\[\Tr \Gamma_-(\cL-\cK,zq) q^d \Gamma_+(\cL,z) = \Tr q^d \Gamma_+(\cL,z)\Gamma_-(\cL-\cK,zq) = \]

\[(1-q)^{\lang c_1(\cL),c_1(\cK)-c_1(\cL)\rang}\ 
\Tr q^d \Gamma_-(\cL-\cK,zq)\Gamma_+(\cL,z).\]
The second equality comes from the obvious fact that $d$ measures the 
grading, and $z$ measures how much $\Gamma_-(\cL,z)$ increases the grading.
Repeating the process of cycling $\Gamma_-(\cL)$ around the trace leaves
\[\lim_{n\rightarrow \infty} (q;q)_n^{\lang c_1(\cL),c_1(\cK)-c_1(\cL)\rang}\ 
\Tr q^d \Gamma_-(\cL-\cK,zq^n)\Gamma_+(\cL,z) = \]
\[(q;q)_\infty^{\lang c_1(\cL),c_1(\cK)-c_1(\cL)\rang}\ 
\Tr q^d\ \Gamma_-(\cL-\cK,0) \Gamma_+(\cL,z) = \]
\[(q;q)_\infty^{\lang c_1(\cL),c_1(\cK)-c_1(\cL)\rang}\ 
\Tr q^d\ = (q;q)_\infty^{\lang c_1(\cL),c_1(\cK)-c_1(\cL)\rang-\chi(S)}, \]
with $(a;q)_n = (1-a)(1-aq)\cdots(1-aq^n)$, and $\chi(S)$ is the Euler characteristic of $S$.
\subsection{Correlation functions}
\label{partitionsection}
Next, we turn to more general correlation functions, in setup (\ref{idealsetup}).
This is equivalent to making the specialization $t_1=-t_2 = t$ in (\ref{csetup}). 
We see that the vertex operator depends only on $\mu=m/t$, so we may further
assume $t=1$. 

Under this specialization, the relationship between the fixed-point basis and the Nakajima
basis become much simpler: On the symmetric function side, when $t_1=-t_2=1$
the Jack polynomials specialize to Schur polynomials. On the vertex operator
side, the operator $\Gamma(\cL,z)$ simplifies because the canonical bundle now
becomes trivial equivariantly (hence the term $\cK$ disappears). Also,
the operator $c \cup \_$ on cohomology
admits a simple expression in the vertex operators. This also stems from the vanishing
of $\cK$.

The combinatorics of this situation can be described by that of the \emph{charge zero
infinite wedge representation}. The description of the infinite wedge
representation is standard (see, for instance Ka\c{c} \cite{K}), but 
we recall it here. Let $V$ be the vector space with basis
$v_n, n \in \mathbb{Z}$. The \emph{infinite wedge space} $\fock$ is the vector
space with basis
\[v_I = v_{i_1} \wedge v_{i_2} \wedge \cdots\]
such that $i_1 > i_2 > ...$, and such that for $n >> 0$, $i_n = n+m-1$. The integer
$m$ is called the \emph{charge} of $v_I$. The charge induces a grading on 
$\fock$, and the charge $m$ component is labeled $\fockm$.
Let $Q:\bigwedge^\infty_m V \rightarrow \bigwedge^\infty_{m+1} V$ be the isomorphism
\begin{equation}
Q: v_{i_1} \wedge v_{i_2} \wedge \cdots \mapsto v_{i_1+1} \wedge v_{i_2+1} \wedge \cdots
\end{equation}
Finally let $\alpha_0$ be the operator that multiplies $\bigwedge^\infty_m V$ by $m$.

Each component has a basis indexed by partitions:
\[v_\mu = v_{\mu_1+m}\wedge v_{\mu_2+m-1} \wedge \cdots \wedge v_{\mu_{i}-i+m+1} \wedge \cdots\]
$\fock$ has operators $\psi_n, \psi^*_n$ given by
\[\psi_i(v_{i_1}\wedge v_{i_2}\wedge \cdots)  =  \begin{cases} 0 & \mbox{if $i = i_j$} \\ 
(-1)^{j} v_{i_1}\wedge \cdots \wedge v_{i_{j}} \wedge v_i \wedge v_{i_{j+1}} \wedge \cdots
& v_{i_j} > v_i > v_{i_{j+1}}\end{cases}\]
\[\psi_i^*(v_{i_1}\wedge v_{i_2}\wedge \cdots)  =  \begin{cases} 0 & \mbox{if $i \neq i_j$} \\ 
(-1)^{j-1} v_{i_1}\wedge \cdots \wedge v_{i_{j-1}} \wedge v_{i_{j+1}} \wedge \cdots
& i = i_{j}\end{cases}\]
These operators generate a Clifford algebra with commutation relations
\begin{equation}
\label{fermrelations}
\psi_i \psi^*_j + \psi_j^*\psi_i = \delta_{ij},
\quad \psi_i\psi_j = -\psi_i\psi_j,\quad \psi_i^*\psi_j^* = -\psi_i^*\psi_j^*,
\end{equation}
over which $\bigwedge^\infty V$ forms a representation called the 
\emph{infinite wedge representation}. In fact, $\bigwedge^\infty V$
is generated by applying the elements $\psi_{n+1},\psi^*_{-n}, n \geq 0$
to the \emph{vacuum vectors} $|m\rangle = v_m\wedge v_{m-1} \wedge \cdots$.

We are now in a position to describe the combinatorics of $\cF_{\C^2}$ with the above torus action.
Let $\Phi:\cF \rightarrow \bigwedge_0^\infty V$ be the isomorphism sending
\begin{equation}
[I_\mu] \mapsto (-1)^{|\mu|}\left(\prod_{\Box \in \mu} h(\Box)\right) v_\mu,
\end{equation}
where $h(\Box) = a_\mu(\Box)+l_\mu(\Box)+1$, the hook length of $\Box$.
Recall that $\alpha_k = (-1)^{k+1} \alpha_{-k}^*$. Thanks to the hook length
normalization above, the inner product transports to 
$\langle v_\mu, v_\lambda \rangle = \delta_{\mu,\lambda}$.
\begin{Proposition}{(Boson-Fermion correspondence)}
\label{BF}
Under the isomorphism $\Phi$,
\begin{equation}
\alpha_k \mapsto \sum_n \psi_{n} \psi^*_{n+k}
\end{equation}
Furthermore, we can recover the operators $\psi_i$ from $\alpha_k$ by
\begin{eqnarray}
\psi(x) & = &\sum_n \psi_nx^{n} = x^{\alpha_0}Q\Gamma_-(x)\Gamma_+(x)^{-1} \\
\psi^*(x) & = &\sum_n \psi^*_n x^{-n} = Q^{-1}x^{-\alpha_0}\Gamma_-(x)^{-1}\Gamma_+(x)
\end{eqnarray}
where 
\[\Gamma_{\pm}(x) = \exp(\sum_{n\geq 0} \frac{x^{\mp n}}{n} \alpha_{\pm n}).\]
\end{Proposition}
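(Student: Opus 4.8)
The plan is to prove the two assertions in order: first identify the bosonic generators $\alpha_k$ with the fermion bilinears $\sum_n \psi_n\psi^*_{n+k}$, and then bootstrap the vertex operator formulas for $\psi(x),\psi^*(x)$ from that identification. I would begin by recording what the previous subsections already supply. Specializing (\ref{jack}) to $t_1=-t_2=1$ we have $\theta = -t_2/t_1 = 1$, at which the integral-form Jack polynomial degenerates to the Schur polynomial, so $\tilde J_\mu = (-1)^{|\mu|}\big(\prod_{\square\in\mu}h(\square)\big)s_\mu$ with the stated normalization. Comparing with the definition of $\Phi$, the sign-and-hook prefactor $(-1)^{|\mu|}\prod h(\square)$ appears identically in both, so the composite dictionary is the clean classical one $v_\mu \leftrightarrow s_\mu$, under which $\langle v_\mu,v_\lambda\rangle = \delta_{\mu\lambda}$ is orthonormality of Schur functions. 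Under this dictionary $\alpha_{-k}$ is multiplication by the power sum $p_k$, and $\alpha_k$ is its adjoint (up to the sign $(-1)^{k+1}$ from (\ref{alphadual})), the corresponding skewing operator.

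With this reduction, proving $\alpha_k = \sum_n \psi_n\psi^*_{n+k}$ becomes a computation internal to $\fock$. Acting with $a_{-k} := \sum_n \psi_n\psi^*_{n-k}$ (for $k>0$) on $v_\mu$, the operator $\psi^*_{n-k}$ deletes an occupied mode at $n-k$ and $\psi_n$ reinserts an empty mode at $n$, so each summand slides a single particle up by $k$ in the Maya diagram of $\mu$, i.e.\ adds a border strip of size $k$; the Koszul sign $(-1)^{(\#\text{modes hopped over})}$ from (\ref{fermrelations}) equals $(-1)^{\mathrm{ht}}$ of the strip. Summing over $n$ reproduces exactly the Murnaghan--Nakayama expansion of $p_k\, s_\mu$, which under the dictionary above is the action of $\alpha_{-k}$. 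Taking adjoints and tracking the sign $(-1)^{k+1}$ handles $k>0$; the case $k=0$ requires normal ordering $\sum_n \!:\!\psi_n\psi^*_n\!:$, which reproduces the charge operator $\alpha_0$, and the same regularization is what produces the central term $k\,\delta_{k,-l}$ in the Heisenberg relation.

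Granting Part~1, I would obtain the vertex operator formulas by the standard intertwining-plus-vacuum argument. From Part~1 and (\ref{fermrelations}) one computes $[\alpha_k,\psi_j]=\psi_{j-k}$, hence $[\alpha_k,\psi(x)]=x^k\psi(x)$ and $[\alpha_k,\psi^*(x)]=-x^k\psi^*(x)$. Writing $R(x)=x^{\alpha_0}Q\,\Gamma_-(x)\Gamma_+(x)^{-1}$ and using $[\alpha_k,\alpha_{-n}]=k\delta_{kn}$ together with $[\alpha_k,Q]=0=[\alpha_k,\alpha_0]$ for $k\neq 0$, one checks $[\alpha_k,R(x)]=x^kR(x)$ for every $k\neq 0$: for $k>0$ it comes from $[\alpha_k,\Gamma_-(x)]=x^k\Gamma_-(x)$, for $k<0$ from $[\alpha_k,\Gamma_+(x)^{-1}]=x^k\Gamma_+(x)^{-1}$. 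Finally I would match the two operators on each vacuum $|m\rangle$: since $\alpha_n|m\rangle=0$ for $n>0$ we have $\Gamma_+(x)^{-1}|m\rangle=|m\rangle$, while $Q|m\rangle=|m+1\rangle$, $x^{\alpha_0}$ contributes $x^{m+1}$, and $\Gamma_-(x)|m+1\rangle=\sum_{k\ge 0}x^k s_{(k)}\leftrightarrow \sum_{k\ge 0}x^k v_{(k)}$, so that $R(x)|m\rangle=\sum_{n>m}x^n\,(v_n\wedge v_m\wedge\cdots)=\psi(x)|m\rangle$. Because each charge sector $\fockm$ is an irreducible Heisenberg module generated from $|m\rangle$ by the $\alpha_{-n}$, the common intertwining relation propagates the equality off the vacuum, giving $\psi(x)=R(x)$ on all of $\fock$; the argument for $\psi^*(x)$ is identical, or follows from the adjoint symmetry.

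The hard part will be the sign bookkeeping in Part~1: reconciling the Clifford signs of the fermionic hop with the height sign of Murnaghan--Nakayama, and with the $(-1)^{|\mu|}\prod h(\square)$ and $(-1)^{k+1}$ normalizations built into $\Phi$ and into (\ref{alphadual}), together with the normal-ordering regularization that makes $a_0$ and the central term of the Heisenberg relation well defined. Part~2, by contrast, is purely formal once Part~1 is in hand, since it only uses the Heisenberg and Clifford relations together with irreducibility of $\fockm$.
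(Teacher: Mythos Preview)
Your proposal is correct and follows the same route as the paper: the paper's proof consists of exactly the Jack-to-Schur specialization you carry out in your first paragraph, and then cites Ka\c{c} \cite{K}, theorem (14.10), for everything else. Your Parts~1 and~2 (Murnaghan--Nakayama for the bosonization of $\alpha_k$, and the intertwining-plus-vacuum argument for the fermionization of $\psi,\psi^*$) are precisely the standard content of that citation, so you have simply unpacked what the paper leaves to the reference.
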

\begin{proof}
Upon setting $t_1=-t_2=1$, the Jack polynomial $\Jt_\mu$ becomes the Schur polynomial:
\[\Jt_\mu|_{t_1=1,t_2=-1} = (-1)^{|\mu|}\left(\prod_{\Box \in \mu} h(\Box)\right) s_\mu.\]
The rest may be found in Ka\c{c} \cite{K}, theorem (14.10).
\end{proof}

\subsection{Correlation functions and the vertex operator}
As pointed out, This representation of $\cF$ has the advantage that the
operator $c \cup \_$ takes a simple form. We now make use of this fact, and
compute the desired integrals. Specifically, we consider
\begin{equation}
F(k_1,...,k_N;m,q) = \sum_n q^n\int_{S^{[n]}} \ch_{k_1}(\cO/\cI)\cdots \ch_{k_N}(\cO/\cI) e(T_mS^{[n]})
\end{equation}
This is an equivariant integral, and such take values in the dual of $\Lie(T)$.
We evaluate this quantity at $(t=1,m)$, yielding a function of $q,m$ for
each $(k_1,...,k_N)$.

We can give an explicit combinatorial expression for $F$ using the localization
formula. The vector bundle $\cO/\cI$ at a fixed point $I_\mu$ has torus character
$\sum_{(i,j)\in \mu} e^{(j-i)t}$. Therefore
\[\ch_k(i_\mu^*\ \cO/\cI) = [t^k] \sum_{(i,j)\in \mu} e^{(j-i)t} = \sum_{(i,j)\in\mu} \frac{(j-i)^k}{k!}\]
The weights of the $\Cx$ action on $TS^{[n]}$ are given by $\{h(\Box),-h(\Box)\}|_{\Box \in \mu}$,
so the integral may be evaluated by localization:
\begin{equation}
F(k_1,...,k_N;q) = \sum_\mu q^{|\mu|} \left(\prod_l \sum_{(i,j)\in\mu} \frac{(j-i)^{k_l}}{k_l!}\right)
\prod_{\Box \in \mu} \frac{h(\Box)^2-m^2}{h(\Box)^2}.
\label{Floc}
\end{equation}

We represent the operator of cup-product against a $\ch(\cO/\cI)$
in the infinite wedge picture. The trick is to write $\ch(H^0(\cO/I)) = \ch(H^0(\cO))-\ch(H^0(I))$, and
express $\ch(\cI) \cup \_$ in the vertex operators.
\begin{Lemma}
Let $f_k :\bigwedge^\infty_0 V \mapsto \bigwedge^\infty_0$ be the operator
\[f_k(v_\mu) = \sum_{(i,j)\in\mu} \frac{(j-i)^{k}}{k!}v_\mu.\]
Then
\begin{equation}
f_k = \frac{[z^k]}{(2\pi i)^k}[y^0]((1-x^{-1})^{-1}-\psi(xy)\psi^*(y))(1-x)^{-1}
\end{equation}
with $x = e^{2\pi i z}$.
\end{Lemma}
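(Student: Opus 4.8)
The plan is to exploit that $f_k$ is \emph{diagonal} in the basis $\{v_\mu\}$, so it suffices to match eigenvalues, and to package all the $f_k$ into a single generating function. Writing $x = e^{w}$ with $w = 2\pi i z$, the operation $\frac{[z^k]}{(2\pi i)^k}$ is precisely $[w^k]$, so the claim is equivalent to showing that the operator $G(x) = [y^0]\bigl((1-x^{-1})^{-1} - \psi(xy)\psi^*(y)\bigr)(1-x)^{-1}$ acts on $v_\mu$ by multiplication by the \emph{content generating function} $\sum_{(i,j)\in\mu} x^{j-i}$. Granting this, expanding $e^{w(j-i)} = \sum_k \frac{(w(j-i))^k}{k!}$ and reading off $[w^k]$ returns $\sum_{(i,j)\in\mu}\frac{(j-i)^k}{k!}$, which is exactly the stated eigenvalue of $f_k$.

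First I would extract the $y^0$-coefficient of the fermionic product. From $\psi(u)=\sum_n \psi_n u^n$ and $\psi^*(v)=\sum_n \psi^*_n v^{-n}$ one computes $\psi(xy)\psi^*(y)=\sum_{n,m}\psi_n\psi^*_m\,x^n y^{n-m}$, so $[y^0]$ selects $n=m$ and produces the diagonal \emph{occupation-number} operator $\sum_n x^n\,\psi_n\psi^*_n$. Since $\psi_n\psi^*_n$ is the projector onto states in which site $n$ is occupied, its eigenvalue on $v_\mu$ reads off the particle set $S_\mu = \{\mu_i - i + 1 : i\geq 1\}$ coming from the charge-zero identification $v_\mu = v_{\mu_1}\wedge v_{\mu_2-1}\wedge\cdots$.

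The crucial step, and the main subtlety, is the regularization encoded in the $(1-x^{-1})^{-1}$ counterterm. The bare sum $\sum_n x^n\psi_n\psi^*_n$ is ill-defined because infinitely many sites are occupied, so I would work in the regime $|x|>1$, where $(1-x^{-1})^{-1}=\sum_{n\leq 0}x^n$ is exactly the occupation sum of the charge-zero vacuum $\{n\leq 0\}$. Then $(1-x^{-1})^{-1}-\sum_n x^n\psi_n\psi^*_n$ equals $-\sum_n x^n\!:\!\psi_n\psi^*_n\!:$, with vacuum-subtracted normal ordering, and its action on $v_\mu$ is the \emph{finite} difference $\sum_{i\geq 1}\bigl(x^{1-i}-x^{\mu_i-i+1}\bigr)$ (using $\sum_{n\leq 0}x^n=\sum_{i\geq 1}x^{1-i}$). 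This is precisely where normal ordering converts a divergent series into something finite term-by-term on each $v_\mu$; keeping the expansion regime consistent with the vacuum subtraction is the only genuine obstacle.

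Finally I would multiply by $(1-x)^{-1}$ and reindex. Each summand factors as $x^{1-i}(1-x^{\mu_i})$, so dividing by $1-x$ collapses it to the \emph{finite} geometric sum $\sum_{l=0}^{\mu_i-1}x^{1-i+l}$; setting $j=l+1$ identifies the exponent $1-i+l=j-i$ with the content of the cell $(i,j)$, giving $G(x)\big|_{v_\mu}=\sum_{(i,j)\in\mu}x^{j-i}$ as required. The remaining Taylor expansion and coefficient extraction are routine. Beyond the regularization, the only care needed is bookkeeping: fixing the charge-zero positions $\mu_i-i+1$, confirming the sign convention that makes $\psi_n\psi^*_n$ the occupation projector, and the collapse of the geometric sum that realizes the cell-content bijection.
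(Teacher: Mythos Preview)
Your proposal is correct and follows essentially the same route as the paper's proof: identify $[y^0]\psi(xy)\psi^*(y)$ with the diagonal operator $v_\mu\mapsto\sum_{i\geq 1}x^{\mu_i-i+1}v_\mu$, work in the regime $|x|>1$ so that $(1-x^{-1})^{-1}$ cancels the vacuum contribution, and collapse the resulting geometric sums into $\sum_{(i,j)\in\mu}x^{j-i}$. Your additional framing in terms of occupation numbers and normal ordering is a nice gloss but does not change the underlying argument.
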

\begin{proof}
The operator $[y^0]\psi(xy)\psi^*(y)$ is the same as
\[v_\mu \mapsto \left(\sum_{i \geq 1} x^{\mu_i-i+1}\right)v_\mu\]
which converges for $|x| > 1$. In this range, we get
\[((1-x^{-1})^{-1}-[y^0]\psi(xy)\psi^*(y))(1-x)^{-1} = \]
\[(\sum_{1 \leq i \leq \ell(\mu)} x^{-i+1}-x^{\mu_i-i+1})(1-x)^{-1} =\]
\[\sum_{1 \leq i \leq \ell(\mu)} x^{-i+1}+...+x^{\mu_i-i} =
\sum_{\Box = (i,j) \in \mu} x^{j-i}.\]
\end{proof}

Thus, it suffices to consider the auxilliary functions
\[G(x_1,...,x_N;y_1,...,y_N;q,m) = \]
\begin{equation}
\Tr_{\bigwedge_0^\infty}\ q^d\psi(x_1y_1)\psi^*(y_1)\cdots\psi(x_Ny_N)\psi^*(y_N)\ W(m,1).
\end{equation}
As pointed out before, for fixed numerical values of $x_i,y_i$, this is technically not 
an operator on $\bigwedge^\infty_0$, which is defined as a direct sum. However,
the matrix elements of this operator make perfect sense as long as
\begin{equation} \label{order}
|x_1y_1| > |y_1| > |x_2y_2| > |y_2| > ... > |x_Ny_N| > |y_N| > 1.
\end{equation}
Furthermore, the trace converges as long as $q$ is sufficiently small.
In this case, it is enough that $|x_1y_1| < |1/q|$. We now complete the 
final step of strategy \ref{strat}, and compute $G$ in this range.

Using the boson-fermion correspondence, theorem \ref{mainT}, and
the method of subsection \ref{partitionsection},
\[G(x_1,...,x_N;y_1,...,y_N;m,q) = \]

\[\Tr\ q^d \Gamma_-(x_1y_1)\Gamma_+(x_1y_1)^{-1}\Gamma_-(y_1)^{-1}\Gamma_+(y_1) \cdots\]
\[\Gamma_-(x_Ny_N)\Gamma_+(x_Ny_N)^{-1}\Gamma_-(y_N)^{-1}\Gamma_+(y_N) \Gamma_-(1)^m \Gamma_+(1)^{-m} =\]

\[\Tr\ \left[\Gamma_-(x_1y_1q)\right]\ q^d\ \Gamma_+(x_1y_1)^{-1}\Gamma_-(y_1)^{-1}\Gamma_+(y_1) \cdots\]
\[\Gamma_-(x_Ny_N)\Gamma_+(x_Ny_N)^{-1}\Gamma_-(y_N)^{-1}\Gamma_+(y_N) \Gamma_-(1)^m \Gamma_+(1)^{-m} =\]

\[\Tr\ q^d\ \Gamma_+(x_1y_1)^{-1}\Gamma_-(y_1)^{-1}\Gamma_+(y_1) \cdots\]
\[\Gamma_-(x_Ny_N)\Gamma_+(x_Ny_N)^{-1}\Gamma_-(y_N)^{-1}\Gamma_+(y_N) \Gamma_-(1)^m \Gamma_+(1)^{-m} 
\left[\Gamma_-(x_1y_1q)\right]=\]

\[\prod_{j=1}^N\left( 1-\frac{x_1y_1}{x_jy_j}q\right) \prod_{j=1}^N\left( 1-\frac{x_1y_1}{y_j}q\right)^{-1}
\prod_{j=1}^N\left( 1-x_1y_1q\right)^{m}\]
\[\Tr\ q^d\ \left[\Gamma_-(x_1y_1q)\right] \Gamma_+(x_1y_1)^{-1}\Gamma_-(y_1)^{-1}\Gamma_+(y_1) \cdots\]
\[\Gamma_-(x_Ny_N)\Gamma_+(x_Ny_N)^{-1}\Gamma_-(y_N)^{-1}\Gamma_+(y_N) \Gamma_-(1)^m \Gamma_+(1)^{-m} =\]
\[\cdots\]
\[(q;q)_\infty (x_1q;q)_\infty^{-1} \prod_{j=2}^N \left(\frac{x_1y_1}{x_jy_j}q;q\right)_\infty 
\prod_{j=2}^N \left(\frac{x_1y_1}{y_j}q;q\right)_\infty^{-1} \prod_{j=1}^N (x_1y_1q;q)_\infty^{m}\]
\[\Tr\ q^d\ \left[1 \right] \Gamma_+(x_1y_1)^{-1}\Gamma_-(y_1)^{-1}\Gamma_+(y_1) \cdots\]
\[\Gamma_-(x_Ny_N)\Gamma_+(x_Ny_N)^{-1}\Gamma_-(y_N)^{-1}\Gamma_+(y_N) \Gamma_-(1)^m \Gamma_+(1)^{-m}\]
Here we have repeatedly cycled around the trace as in subsection \ref{partitionsection}. The constraint
(\ref{order}) gives us the conditions we require for the commutation relations in lemma \ref{CR}.
We can repeat this process on $\Gamma_+(x_1y_1)^{-1}$, $\Gamma_-(y_1)^{-1}$, $\Gamma_+(y_1)$,
with the difference that the $\Gamma_+$ terms must move to the right, whereas the $\Gamma_-$ 
terms move left. Using the theta function 
$\theta(x;q) = (xq;q)_\infty(x^{-1};q)_\infty(q;q)_\infty^{-2}$, we arrive at
\[(q;q)_\infty^{2mN-1}\theta(x_1;q)^{-1} \prod_{j=2}^N \theta\left(\frac{x_1y_1}{x_jy_j};q\right) 
\prod_{j=2}^N \theta\left(\frac{x_1y_1}{y_j};q\right)^{-1} \prod_{j=1}^N \theta(x_1y_1;q)^{m}\]
\[\Tr\ q^d\ \left[1 \right] \Gamma_-(y_1)^{-1}\Gamma_+(y_1) \cdots\]
\[\Gamma_-(x_Ny_N)\Gamma_+(x_Ny_N)^{-1}\Gamma_-(y_N)^{-1}\Gamma_+(y_N) \Gamma_-(1)^m \Gamma_+(1)^{-m}=\]

\[(q;q)_\infty^{2mN}\theta(x_1;q)^{-1} \prod_{j=2}^N \theta\left(\frac{x_1y_1}{x_jy_j};q\right) 
\prod_{j=2}^N \theta\left(\frac{x_1y_1}{y_j};q\right)^{-1} \prod_{j=1}^N \theta(x_1y_1;q)^{m}\]
\[\prod_{j=2}^N \left(\frac{y_1}{x_jy_j}q;q\right)_\infty^{-1} 
\prod_{j=2}^N \left(\frac{y_1}{y_j}q;q\right)_\infty \prod_{j=1}^N (y_1q;q)_\infty^{-m}\]
\[\Tr\ q^d\ \left[1 \right]\Gamma_+(y_1) \cdots\]
\[\Gamma_-(x_Ny_N)\Gamma_+(x_Ny_N)^{-1}\Gamma_-(y_N)^{-1}\Gamma_+(y_N) \Gamma_-(1)^m \Gamma_+(1)^{-m}=\]

\[\prod_{j=2}^N \theta\left(\frac{x_1y_1}{x_jy_j};q\right) 
\prod_{j=2}^N \theta\left(\frac{x_1y_1}{y_j};q\right)^{-1} \prod_{j=1}^N \theta\left(x_1y_1;q\right)^{m}\]
\[\prod_{j=2}^N \theta\left(\frac{y_1}{x_jy_j};q\right)^{-1} 
\prod_{j=2}^N \theta\left(\frac{y_1}{y_j};q\right) \prod_{j=1}^N \theta(y_1;q)^{-m}\]
\[G(x_2,...,x_N;y_2,...,y_N;m,q) =\]

\begin{equation}
\label{Gtheta}
\left(\prod_i \frac{1}{\theta(x_i;q)}\right)
\left(\prod_{i < j} \frac{\theta(\frac{y_i}{y_j};q)\theta(\frac{x_iy_i}{x_jy_j};q)}
{\theta(\frac{y_i}{x_jy_j};q)\theta(\frac{x_iy_i}{y_j};q)}\right)
\left(\prod_i \frac{\theta(x_iy_i;q)}{\theta(y_i;q)}\right)^mZ(m,q).
\end{equation}
\newline
Here $Z(m,q) = G(\emptyset;\emptyset;m,q)$, which was calculated in the last subsection.
Taking $t=1$, we get
\begin{equation}
Z(m,q) = (q;q)_\infty^{m^2-1}.
\end{equation}
This turns into the following proposition about $F$, the original correlation functions:
\begin{Proposition}
\label{theta}
Let $x_k = e^{2\pi i z_k}$, $y_k = e^{2\pi i w_k}$, $\theta(z_k;q) = \theta(x_k;q)$. Then
\[F(k_1,...,k_n;m,q) = \int_{r_1i}^{r_1i+1}dw_1\cdots\int_{r_Ni}^{r_Ni+1}dw_N\]
\[\int_{\epsilon_1i}^{\epsilon_1i+1}\frac{dz_1}{z_1^{k_1+1}}\cdots
\int_{\epsilon_Ni}^{\epsilon_Ni+1}\frac{dz_N}{z_N^{k_N+1}} F(x_1,...,x_N;y_1,...,y_N;m,q),\]
where $-1 < r_1 < \cdots < r_N < 0$, $0<\epsilon_i < r_{i+1}-r_i$, the second $F$ is
given recursively by
\[F(x_1,...,x_N;y_1,...,y_N;m,q) = \frac{F(x_2,...,x_N;y_2,...,y_N;m,q)}{(1-x_1)(1-x_1^{-1})}-\]
\[\frac{G(x_1,...,x_N;y_1,...,y_N;m,q)}{1-x_1},\]
\[F(\emptyset;\emptyset;m,q) = Z(m,q) = (q;q)_\infty^{m^2-1},\]
and $G$ is the expression in (\ref{Gtheta}).
\end{Proposition}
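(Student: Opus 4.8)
The plan is to assemble the statement from three ingredients that are already in place: the trace formula of Strategy \ref{strat}, the fermionic rewriting of cup-product by $\ch_k(\cO/\cI)$, and the closed form (\ref{Gtheta}) for $G$. First I would record that, because cup product on $H^*_T(S^{[n]})$ is commutative and acts diagonally in the fixed-point basis (by the projection formula $c\cup i_{\mu!}(1)=(i_\mu^*c)\,i_{\mu!}(1)$), the class $\ch_{k_1}(\cO/\cI)\cdots\ch_{k_N}(\cO/\cI)$ acts as the composite of the operators $\ch_{k_l}(\cO/\cI)\cup$. Strategy \ref{strat}(2) then gives
\[F(k_1,\dots,k_N;m,q)=\Tr\, q^d\,\big(\ch_{k_1}(\cO/\cI)\cup\big)\cdots\big(\ch_{k_N}(\cO/\cI)\cup\big)\,W(m,1).\]
Passing to the infinite wedge via Proposition \ref{BF} and substituting the lemma expressing $f_k$, each factor $\ch_{k_l}(\cO/\cI)\cup$ becomes $\tfrac{[z_l^{k_l}]}{(2\pi i)^{k_l}}[y_l^0]\big((1-x_l^{-1})^{-1}-\psi(x_ly_l)\psi^*(y_l)\big)(1-x_l)^{-1}$, with $x_l=e^{2\pi iz_l}$ and $y_l=e^{2\pi iw_l}$. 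Pulling every coefficient functional outside the trace exhibits $F$ as $\prod_l\tfrac{[z_l^{k_l}]}{(2\pi i)^{k_l}}[y_l^0]$ applied to one trace of the product of the operators $\big((1-x_l^{-1})^{-1}-\psi(x_ly_l)\psi^*(y_l)\big)(1-x_l)^{-1}$ against $W(m,1)$; this single trace is the function I call $F(x_1,\dots,x_N;y_1,\dots,y_N;m,q)$.

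Next I would establish the recursion by peeling off the variable $1$. Expanding its first factor into the scalar part $(1-x_1)^{-1}(1-x_1^{-1})^{-1}$ and the fermionic part $-(1-x_1)^{-1}\psi(x_1y_1)\psi^*(y_1)$, linearity of the trace splits $F$ into two pieces. The scalar part factors out and leaves precisely the trace defining $F(x_2,\dots,x_N;y_2,\dots,y_N;m,q)$, yielding the first term of the recursion. The fermionic part places a single bilinear $\psi(x_1y_1)\psi^*(y_1)$ in front of the remaining operators, and the content of the step is to identify its contribution, once the constant terms in $y_2,\dots,y_N$ are taken, with $-(1-x_1)^{-1}G(x_1,\dots,x_N;\dots)$, where $G$ is the pure $\psi\psi^*$ correlator of (\ref{Gtheta}). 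Granting this identification, the boson–fermion correspondence together with Theorem \ref{mainT} rewrites the bracket defining $G$ as a product of $\Gamma_\pm$ operators, and cycling the $\Gamma_-$ factors around the trace using Lemma \ref{CR} produces the theta quotient (\ref{Gtheta}), exactly as in the calculation of the partition function. Iterating down to $N=0$ supplies the base case $F(\emptyset;\emptyset;m,q)=Z(m,q)=(q;q)_\infty^{m^2-1}$.

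Finally I would convert the formal functionals into the contour integrals in the statement. With $y_l=e^{2\pi iw_l}$, the constant term $[y_l^0]$ is the integral over one period $w_l\in[r_li,\,r_li+1]$, whose image is the circle $|y_l|=e^{-2\pi r_l}$, and $\tfrac{[z_l^{k_l}]}{(2\pi i)^{k_l}}$ becomes $\int_{\epsilon_li}^{\epsilon_li+1}z_l^{-k_l-1}\,dz_l$. The heights must be chosen as $-1<r_1<\cdots<r_N<0$ and $0<\epsilon_i<r_{i+1}-r_i$ precisely so that the induced radii satisfy the nesting (\ref{order}) required both for convergence of the trace and for the legitimacy of the commutations used in Lemma \ref{CR}.

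The main obstacle is the middle step. The individual summands produced by peeling are genuinely singular — they have poles at $x_i=1$ and their naive coefficient extractions diverge — so the clean identification of the fermionic piece with $G$ is only valid after the prescribed contours regularize those poles. Concretely, the recursively defined integrand differs from the literal full-operator trace by terms supported at these poles, and the real work is to verify that the ordering constraints on $r_i$ and $\epsilon_i$ place every such spurious pole on one definite side of each contour, so that those terms integrate to zero. Pinning down this cancellation, rather than the symbolic manipulation of traces, is the delicate part of the argument.
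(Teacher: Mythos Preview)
Your proposal is correct and follows essentially the same route as the paper: the proposition is a packaging of the computation that precedes it, obtained by combining the lemma expressing $f_k$ in the fermionic operators, the definition of the auxiliary correlator $G$, and the closed form \eqref{Gtheta} derived by the boson--fermion correspondence and repeated cycling through the trace via Lemma~\ref{CR}. Your final paragraph is well placed: the point you flag---that the recursively defined integrand is not literally the full mixed-operator trace and that the agreement is only secured after the prescribed coefficient extractions/contours absorb the spurious poles at $x_i=1$---is exactly the subtlety the paper leaves implicit, and your identification of the nesting conditions on $r_i,\epsilon_i$ as what makes \eqref{order} hold is the right mechanism.
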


\subsection{Quasimodularity of the correlation Functions}

If we know the coefficient of $z^k$ in $\theta(z;q) = \theta(x;q)$ for all $k$,
proposition \ref{theta} gives an explicit expression for $F(k_1,...,k_N;m,q)$.
However, a concise classification for these functions is preferable
to a complicated explicit expression. In this subsection, we give such a
classification that lands $F$ in a finite-dimensional vector space of functions of 
$q$, whose dimension depends on $k_j$. Specifically, we prove that $F$ is a 
\emph{quasimodular form} in the variable $q$.

Before stating the theorem, we classify the coefficients of 
$\theta(z;q)$. Let 
\[E_{2k} = -\frac{B_{2k}}{2k}+\sum_{n \geq 1}\left( \sum_{d|n}d^{k-1}\right)q^n,\]
the classical Eisenstein series, for $k\geq 1$. Here $B_{2k}$ are the Bernoulli numbers,
$B_2 = \frac{1}{6}$, $B_4 = -\frac{1}{30}$, $B_6 = \frac{1}{42}$, etc.
\begin{Lemma}
\[\theta(z;q) = a_1(q)z+a_3(q)z^3+...\]
where $a_k(q) \in \C[E_2,E_4,E_6]$. Furthermore, if $E_{2k}$ has
weight $2k$, then $a_k(q)$ has weight $2k-1$.
\end{Lemma}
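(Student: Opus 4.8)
The plan is to expand the theta function $\theta(z;q) = \theta(x;q)$ with $x = e^{2\pi i z}$ as a power series in $z$, and identify each coefficient $a_k(q)$ as a polynomial in the Eisenstein series $E_2, E_4, E_6$ of the stated weight. The starting point is the product formula $\theta(x;q) = (xq;q)_\infty(x^{-1};q)_\infty(q;q)_\infty^{-2}$, which I would convert to a logarithmic form. Taking a logarithmic derivative in $z$ turns the infinite products into sums of the shape $\sum_{n\ge 1} \frac{x q^n}{1-xq^n}$ and $\sum_{n\ge 1}\frac{x^{-1}q^n}{1-x^{-1}q^n}$, and these in turn expand as $q$-series whose $z$-derivatives at $x=1$ produce divisor sums $\sum_{d\mid n} d^{j}$. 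The oddness of $\theta$ in $z$ (i.e.\ $\theta(-z;q) = -\theta(z;q)$, visible from $\theta(x^{-1};q) = -\theta(x;q)$ up to the normalization) immediately forces $a_{2k}(q) = 0$, so only odd powers survive, matching the claimed form $a_1 z + a_3 z^3 + \cdots$.

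\textbf{Extracting the coefficients.}
First I would establish the weight-$1$ (leading) coefficient: $a_1(q)$ is $\theta'(0;q)$, which by the product formula and the classical Jacobi derivative identity equals a constant multiple of $(q;q)_\infty^3$ (equivalently $\eta(q)^3$ up to a power of $q$). Next, the key computational device is to write $\log\theta(z;q)$ as a sum over the lattice of zeros/poles and expand. Concretely, I would use the standard expansion
\[
\frac{d}{dz}\log\theta(z;q) = \pi i \cot(\pi z) + 4\pi i \sum_{n\ge 1} \frac{q^n}{1-q^n}\sin(2\pi n z)\cdot(\text{appropriate form}),
\]
or more cleanly the Weierstrass/Eisenstein expansion relating the logarithmic derivative of the Jacobi theta function to $\sum_{k\ge 1} E_{2k} z^{2k-1}$. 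The cleanest route is to recall the classical identity expressing $\log\big(\theta(z;q)/a_1(q)z\big)$ as $-\sum_{k\ge 1}\frac{(2\pi i)^{2k}}{(2k)!}\,\widetilde{E}_{2k}\,z^{2k}$ where $\widetilde E_{2k}$ are the normalized Eisenstein series. Exponentiating gives each $a_k(q)$ as a polynomial in $a_1$ and the $E_{2k}$.

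\textbf{The weight bookkeeping.}
The weight assertion is the heart of the statement and I would handle it by tracking homogeneity under the scaling $z \mapsto z/c$, $q \mapsto q$ combined with the modular weight grading in which $E_{2k}$ has weight $2k$. The point is that each factor $\frac{(2\pi i)^{2j}}{(2j)!} E_{2j} z^{2j}$ in the logarithm is weight-neutral only if we assign $z$ weight $-1$; then $a_1(q)z$ carries the full weight, and since $\theta$ has a definite total weight, reading off $[z^{2k-1}]$ forces $a_{2k-1}(q)$ to be weight $(2k-1) + (\text{weight of } z^{2k-1})$. Pinning down that $a_1$ has weight $1$ (from $\eta^3$ having weight $3/2$... here one must be careful with half-integral conventions, which is why the paper uses $E_2,E_4,E_6$ and integer weights after the normalization) and that each subsequent multiplication by an $E_{2k}z^{2k}$ block preserves the relation gives the inductive conclusion that $a_k(q)$ has weight $2k-1$. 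That $a_k(q)$ lies specifically in $\C[E_2,E_4,E_6]$ rather than a larger ring follows because the graded ring of quasimodular forms for $SL(2,\Z)$ is exactly $\C[E_2,E_4,E_6]$, so any $q$-series built from the $E_{2j}$ of a fixed weight lands there.

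\textbf{Main obstacle.}
I expect the main difficulty to be the weight-tracking normalization rather than the coefficient extraction itself. The subtlety is that $a_1(q) = \eta^3$ naively has weight $3/2$, yet the lemma asserts integer weights with $a_1$ of weight $1$; reconciling this requires carefully choosing the normalization of $z$ versus $x = e^{2\pi i z}$ so that the $\eta$-factors cancel into an honest weight-one object, and confirming that the recursive structure then assigns each $a_k$ the clean odd weight $2k-1$. Establishing that $E_2$ (the only quasimodular, non-modular generator) appears with the correct weight and that no spurious higher-weight or non-polynomial terms arise is where the real verification lies; the divisor-sum computation producing the $E_{2k}$ themselves is routine once the logarithmic expansion is set up.
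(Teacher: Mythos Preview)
The paper does not give a proof of this lemma at all: it simply cites ``This is lemma 6.2 of \cite{BO}.'' Your proposal, by contrast, sketches an actual argument, and the route you describe---take the logarithm of the product formula for $\theta$, expand the logarithmic derivative as a $z$-power series whose coefficients are divisor sums, identify these with the Eisenstein series $E_{2k}$, and exponentiate---is exactly the standard proof (and is essentially how Bloch--Okounkov establish it). So your approach is correct and strictly more informative than what the paper provides.

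One correction to your setup: in the paper's normalization $\theta(x;q) = (xq;q)_\infty(x^{-1};q)_\infty(q;q)_\infty^{-2}$, the leading coefficient $a_1(q)$ is a \emph{constant} (namely $2\pi i$, from $1-x^{-1}\sim 2\pi i z$ and the cancellation of the remaining $(q;q)_\infty$ factors), not $\eta^3$. The $\eta^3$ you are remembering comes from the un-normalized Jacobi theta, and the factor $(q;q)_\infty^{-2}$ is precisely what removes it. So your flagged obstacle about half-integral weights does not actually arise. What does arise is that the lemma as stated contains a typo: the coefficient $a_k(q)$ of $z^k$ has weight $k-1$, not $2k-1$. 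This is confirmed later in the paper itself, in the proof of Theorem~\ref{quasiT}, which says explicitly ``the coefficient of $z^k$ has weight $k-1$.'' With that correction your weight bookkeeping goes through cleanly: $\log(\theta(z;q)/a_1 z)$ is a series $\sum_{j\ge 1} c_j E_{2j} z^{2j}$ of total weight zero (assigning $z$ weight $-1$), so exponentiating and multiplying by $a_1 z$ gives $[z^k]\theta$ weight $k-1$.
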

\begin{remark} An element of $\C[E_2,E_4,E_6]$ is called a \emph{quasimodular form}
(for the full modular group $\Gamma$). Quasimodular forms were
introduced in \cite{KZ}. We do not develop their theory here, since we only use them
as a tool for classifying the coefficients of $\theta(z;q)$. They contain the set of Modular
forms, but also contain the element $E_2$, which has the property
\[E_2\left(\frac{a\tau+b}{c\tau+d}\right) =
(c\tau+d)^2E_2(\tau)-\frac{c(c\tau+d)}{4\pi i}.\]
\end{remark}
\begin{remark} The connection between quasimodular forms and the infinite wedge representation
is perhaps mysterious, but is proved by Bloch and Okounkov in \cite{BO}.
\end{remark}
\begin{proof}
This is lemma 6.2 of \cite{BO}.
\end{proof}
We now give an elegant classification of the correlation functions:
\begin{Theorem}
\label{quasiT}
As a function of $q$, $F(k_1,...,k_N;m,q)/Z(m,q)$ is a quasimodular form
of weight $\leq 2N+\sum_i k_i$. As a function of $m$, it is a polynomial of 
degree $2N+2\sum_i \lfloor \frac{k_i}{2} \rfloor$.
\end{Theorem}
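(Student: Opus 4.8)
The plan is to prove quasimodularity and polynomiality directly from the integral representation in Proposition \ref{theta}, which expresses $F(k_1,\dots,k_N;m,q)$ as iterated contour integrals of the explicit theta-function expression $G$ (from \eqref{Gtheta}) and its recursive partner. The key structural input is the preceding lemma, which tells us that $\theta(z;q) = a_1(q)z + a_3(q)z^3 + \cdots$ with each $a_k(q) \in \C[E_2,E_4,E_6]$ a quasimodular form of weight $2k-1$. Since the space of quasimodular forms $\C[E_2,E_4,E_6]$ is a graded ring closed under multiplication and under the derivation $q\frac{d}{dq}$, it suffices to track how weights accumulate through the operations that build $F$ out of copies of $\theta$.

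First I would analyze the weight bookkeeping in $G$. The expression \eqref{Gtheta} is a product of factors $\theta(\,\cdot\,;q)^{\pm 1}$ and the base case $Z(m,q) = (q;q)_\infty^{m^2-1}$. The crucial observation is that $\theta(z;q)$ as a function is \emph{not} itself quasimodular, but its \emph{coefficients} in the $z$-expansion are; so when we extract $[z_i^{k_i}]$ via the contour integrals $\oint dz_i/z_i^{k_i+1}$, each such extraction pulls out a quasimodular coefficient. A factor $\theta(x_iy_i;q)/\theta(y_i;q)$ raised to the power $m$ contributes, after expansion in the $z$-variables and integration, quasimodular forms whose weights are controlled by the degree of the $z$-monomial extracted. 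The systematic way to organize this is to observe that $\theta(z;q)$ begins at order $z^1$ with coefficient $a_1$ of weight $1$, and each extra power of $z$ raises the weight by exactly $2$; thus extracting $[z^{k}]$ from a single $\theta$ yields weight $\le 2\lfloor k/2\rfloor + 1$, and after accounting for the reciprocal factors $1/\theta(x_i;q)$ and the leading weight-$1$ contributions, each of the $N$ variables contributes a base weight of $2$ plus $k_i$ from the degree extraction.

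The main obstacle I expect is the $w$-integrations and the recursive structure of the second $F$ in Proposition \ref{theta}. The $w_i = \log y_i/(2\pi i)$ integrals over the cycles $[r_i i, r_i i + 1]$ pick out the constant term in each $y_i$, and one must verify that this constant-term extraction preserves quasimodularity rather than introducing, say, a derivative in $\tau$ that escapes $\C[E_2,E_4,E_6]$. The reassuring fact is that the ring of quasimodular forms is closed under $q\frac{d}{dq} = \frac{1}{2\pi i}\frac{d}{d\tau}$ (raising weight by $2$), so even if the residue/constant-term operations produce $\tau$-derivatives of the $a_k$, we stay inside the ring; I would make this precise by expanding $G$ as a $q$-series with quasimodular coefficients and checking term-by-term that the contour operations act as $\C[E_2,E_4,E_6]$-linear maps. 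The recursion $F = F'/((1-x_1)(1-x_1^{-1})) - G/(1-x_1)$ then lets me induct on $N$: the division by $(1-x_1)(1-x_1^{-1})$ affects only the rational dependence on $x_1$ and not the $q$-weight, so each recursive step adds at most weight $k_1 + 2$ from the newly integrated variable, giving the claimed total weight $\le 2N + \sum_i k_i$ by induction.

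Finally, for the polynomiality in $m$, I would track the $m$-dependence, which enters \eqref{Gtheta} only through the factor $\big(\prod_i \theta(x_iy_i;q)/\theta(y_i;q)\big)^m$ and through $Z(m,q) = (q;q)_\infty^{m^2-1}$. After dividing by $Z(m,q)$, the surviving $m$-dependence is a product of $N$ factors each raised to the power $m$, and extracting a fixed $q$-coefficient $[q^n]$ kills the $(q;q)_\infty^{m^2}$ and leaves a polynomial in $m$ whose degree is governed by how many powers of $m$ can survive the $z$-integrations at each level. Each variable contributes a factor carrying $m$, and the degree-$k_i$ extraction allows up to $2\lfloor k_i/2\rfloor$ additional powers of $m$ (odd powers of $z$ in $\theta$ forcing the parity bookkeeping that produces the floor), plus a base contribution of $2$ per variable; summing over the $N$ variables yields the stated degree $2N + 2\sum_i \lfloor k_i/2\rfloor$. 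I would establish this by the same induction on $N$, checking that the base case $F(\emptyset;\emptyset;m,q)/Z = 1$ is degree $0$ and that each recursive step adds the correct increment.
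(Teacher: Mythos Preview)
Your proposal identifies the right ingredients but has a genuine gap at the $w$-integration step, which is the heart of the argument.

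You write that you would ``expand $G$ as a $q$-series with quasimodular coefficients and check term-by-term that the contour operations act as $\C[E_2,E_4,E_6]$-linear maps.'' This is precisely the step that requires an idea, not just a check. The factors $\theta(w_i;q)^{-m}$, $\theta(z_i+w_i;q)^m$, and the cross-terms $\theta((w_i-w_j);q)^{\pm 1}$ are not Laurent polynomials in $y_i = e^{2\pi i w_i}$ with quasimodular coefficients in any obvious sense; moreover, for non-integral $m$ the integrand is not even single-valued on the cycle $[r_i i, r_i i + 1]$, so a naive constant-term extraction has no meaning. Closure of $\C[E_2,E_4,E_6]$ under $q\,d/dq$ is true but irrelevant here, since the issue is not $\tau$-derivatives but why the $w$-integral of a theta-quotient lands in the quasimodular ring at all.

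The paper's mechanism, which your sketch lacks, is this: first restrict to $m\in\Z$, and use the quasi-periodicity $\theta(z+\tau,\tau) = e^{-2\pi i(z-\tau)}\theta(z,\tau)$ to see that the $B\cdot C$ part of the integrand transforms by $e^{-2\pi i m z_1}$ under $w_1\mapsto w_1+\tau$. This converts the line integral over $w_1$ into $\frac{1}{1-e^{-2\pi i m z_1}}$ times a contour integral around a fundamental parallelogram, which is then a residue in the theta-quotients. Iterating, $(BC)'$ becomes $\prod_i (1-e^{-2\pi i m z_i})^{-1}$ times an explicit theta-quotient whose weight can be tracked in the grading where $z$ has weight $-1$ (so $\theta(z)$ has weight $-1$ and $\theta(z)^{-1}$ has weight $+1$). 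Only after this residue reduction does the weight bookkeeping become a matter of counting. The extension to non-integral $m$ then comes \emph{a posteriori} from the polynomiality statement.

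Your polynomiality argument is also missing its key observation. It is not enough to say that each variable ``contributes a factor carrying $m$''; the precise fact is that $[m^n]\prod_i\big(\theta(z_i+w_i)/\theta(w_i)\big)^m = \big(\sum_i\log\theta(z_i+w_i)-\log\theta(w_i)\big)^n$ vanishes to order $n$ in $z_1,\dots,z_N$. Combined with the pole of order exactly $2$ (and even Laurent expansion) of $\big((1-e^{2\pi i z_i})\theta(z_i)\big)^{-1}$ at $z_i=0$, this bounds how many powers of $m$ survive the $z$-extraction and produces the $2N+2\sum_i\lfloor k_i/2\rfloor$ with the floor.
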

\begin{proof}
If $R$ is a graded algebra, we get a grading on $R((z))$ by giving $z$ weight $-1$.
For instance, if $R=\C[E_2,E_4,E_6]$, then the power series of $\theta(z;q)$ 
about $z=0$ has weight $-1$ since the coefficient of $z^k$ has weight
$k-1$. On the other hand, $\theta(z,q)^{-1}$ has weight $1$.

Let
\[A = \prod_i \frac{1}{\theta(z_i,\tau)},\quad
B = \prod_{i < j} \frac{\theta(w_i-w_j;q)\theta(z_i+w_i-z_j-w_j;q)}
{\theta(w_i-z_j-w_j;q)\theta(z_i+w_i-w_j;q)},\]
\[C = \left(\prod_i \frac{\theta(z_i+w_i;q)}{\theta(w_i;q)}\right)^m,\quad
X' = \int_{ir_1}^{ir_1+1}dw_1\cdots\int_{ir_N}^{ir_N+1}dw_N X.\]
Here $X=B,C$, and $ABC$ is the expression in (\ref{Gtheta}). We start
by proving that the power series of $(BC)'$ in $z_j$ about $0$ is an
expression of weight $0$ in the algebra $R=\C[E_2,E_4,E_6]$.

In particular, we notice that $BC$ itself is a quotient $f/g$ of
holomorphic functions in $z, w$ of the same weight. 
We show that in the case $m \in \mathbb{Z}$, taking the integral
over $w_j$ reduces to a residue calculation, each one raising $\wt(f)-\wt(g)$
by $1$. To do this, notice that by the theta relation
\[\theta(z+\tau,\tau) = e^{-2\pi i(z-\tau)}\theta(z,\tau),\]
we find that $BC \mapsto e^{-2\pi i mz_1} BC$ under $w_1 \mapsto w_1+\tau$. Then
\[\int_{ir_1}^{ir_1+1}dw_1 BC= \frac{1}{1-e^{-2\pi i m z_1}}
\left(\int_{ir_1}^{ir_1+1}dw_1 BC-\int_{ir_1+\tau}^{ir_1+1+\tau}dw_1 BC\right) =\]
\[\frac{1}{1-e^{-2\pi i m z_1}} \int_{\Box} dw_1 BC,\]
where $\Box$ is the parallelogram with vertices $ir_1,ir_1+1,ir_1+1+\tau,ir_1+\tau$.
If $m$ is an integer, this becomes a simple residue calculation. It is easy to see
that taking the residue over $w_1$ yields a quotient $f/g$ in the remaining variables
such that $\wt(f) = \wt(g)-1$. 

By induction, it follows that
\[(BC)' = \left(\prod_{i=1}^N \frac{1}{1-e^{-2\pi i m z_i}}\right) D\]
where $D$ is a quotient $f/g$ such that $\wt(f)-\wt(g)=-N$. Since
the product has inhomogeneous weight $\leq N$ and $(BC)'$ is
holomorphic near $z_i = 0$, it must be inhomogeneous of weight $\leq 0$.
The remaining term $A$
has weight $N$, each term $(1-e^{2\pi iz_i})^{-1}$ appearing in proposition \ref{theta}
has weight $-1$, and differentiating raises
the weight by $1$. Therefore, $F(k_1,...,k_N;m,q)$ is quasimodular of weight $\leq 2N+k_1+...+k_N$
when $m$ is an integer.

It remains to show that $F$ is a polynomial in $m$. This also eliminates the
assumption that $m$ is integral, since any polynomial which is $0$ at every
integer is $0$. To do this, check that the coefficient
\[[m^n] \prod_i \left(\frac{\theta(z_i+w_i,\tau)}{\theta(w_i,\tau)}\right)^m=
\left(\sum_i \log\ \theta(z_i+w_i,\tau)-\log\ \theta(w_i,\tau)\right)^n\]
vanishes to order $n$ in $z_1,...,z_N$. In other words, taking fewer than
$n$ derivatives total in any of the variables $z_1,...,z_N$ and evaluating
at $z_j = 0$ gives zero identically in $y_j, \tau$. The expression $B$ is
holomorphic near $z_i = 0$, and
\[\frac{1}{(1-e^{2\pi i z_i})\theta(z_i,\tau)}\]
has a pole of order $2$ at $z_i=0$, and only even terms in the Laurent expansion.
This explains the funny expression for the degree as a polynomial in $m$.

\end{proof}

\begin{ex}
$F(1,3;q,m) = (2-\frac{5}{2}m^2+\frac{1}{2}m^4)q^2+(54-\frac{147}{2}m^2+21m^4-\frac{3}{2}m^6)q^3+...$
According to the theorem, $F(1,3,q,m)(q;q)_\infty^{1-m^2}$ is a polynomial of degree $6$ in 
$m$, and a quasimodular form of weight $\leq 8$ in $q$. A calculation using the low-order terms reveals
that
\[F(1,3;q,m)(q;q)_\infty^{1-m^2} = \left(\frac{2}{3}-\frac{1}{6}m^2-\frac{2}{3}m^4+\frac{1}{6}m^6\right) E_2^3 +\]
\[\left(-\frac{1}{6}-\frac{1}{24}m^2-\frac{1}{6}m^4-\frac{1}{24}m^6\right) E_2E_4 +\]
\[\left(-\frac{35}{3600}+\frac{35}{14400}m^2+\frac{35}{3600}m^4-\frac{35}{14400}m^6\right) E_6 +\]
\[\left(-16+28m^2-14m^4+2m^6\right) E_2^4 +\]
\[\left(-4+\frac{7}{3}m^2+\frac{7}{3}m^4-\frac{2}{3}m^6\right) E_2^2E_4 +\]
\[\left(\frac{50}{21}-\frac{25}{9}m^2+\frac{25}{72}m^4+\frac{25}{504}m^6\right) E_4^2 +\]
\[\left(\frac{13}{30}-\frac{77}{120}m^2+\frac{7}{3}m^4-\frac{1}{40}m^6\right) E_2E_6.\]
\end{ex}

\section{The Moduli of Sheaves}
\label{sheaves}

We start with some very brief motivation from gauge theory, then move on to study
the partition function for sheaves of rank $2$. This material is located in \cite{O}.

\subsection{Instantons and the moduli space}

In quantum gauge theory, one seeks to integrate over the space of gauge fields, i.e.
rank-$r$ vector bundles on a Riemannian 4-manifold $S$ together with
a connection, modulo the action of the group of gauge symmetries. The integrand
may be a function of the form $\exp(-\beta S[A])$, where $S$ is some energy functional
of a connection $A$.
If $\beta$ is large, one may assume that the minimizers of $S$ dominate,
and reduce to an integral over the minimizers of $S$. One is then
led to integrals over finite-dimensional smooth manifolds.

The minimizers are called \emph{instantons}. Specifically, let
\[F_A = d_AA = dA+\frac{1}{2}[A,A]\]
be the curvature of $A$, and
\[S[A] = -\int_M \Tr(F_A \wedge *F_A),\]
where $*$ is the Hodge star.
The minimizers here are solutions to the Yang-Mills (anti) self-duality
equation
\[F_A\pm*F_A = 0.\]
A rank-$r$ instanton is a $U(r)$ vector bundle $E$ on $M$, together
with a connection satisfying the self-duality or anti self-duality equation.

The solutions are grouped by a discrete invariant, \emph{charge},
\[c_2 = \frac{1}{8\pi^2}\int_{\R^4} \Tr F^2.\]
The letter $c_2$ is used because each it represents the 
second Chern class of the bundle $E$. This is even so in the
noncompact case $M=\R^4$, because instantons on $\R^4$
extend to instantons on $S^4$. The resulting vector bundle
on $S^4$ may not be trivial, and in fact $c_2$ is the second Chern
class of this bundle in $H^4(S^4) \cong \Z$. Solutions to the
self duality equation have $c_2<0$, while the anti self duality
solutions have $c_2>0$. One can switch between $c_2>0$ and $c_2<0$
by reversing the orientation of $\R^4$, so it is fair to 
assume instantons are anti-self-dual with $c_2\geq 0$.

In Donaldson theory, charge $n$ $U(r)$ instantons on $\R^4$
are identified with a moduli space
\[\cM(r,n) = \left\{\left(E,\Phi\right)\right\},\]
Where $E$ is a rank-$r$ vector bundle on $\Pt$ with $c_2(E) = n$,
\[\Phi:E|_{\Pp_\infty} \xrightarrow{\cong} \cO_{\Pp_\infty}\]
is a \emph{framing}, i.e. a particular trivialization of $E|_{\Pp_\infty}$,
and $\Pp_\infty$ is the curve $z=0$ in $\Pt$. This space has the advantage
of a complex structure. Notice that $c_1(E) = 0$, just by the existence of 
the framing.

However, it is still not compact. As a first step
towards compactification we embed $\cM(r,n)$ into the larger moduli space
\[\overline{\cM(r,n)} = \left\{(\mbox{rank-$r$ torsion-free sheaf $F$}, 
\Phi:E|_{\Pp_\infty} \xrightarrow{\cong} \cO_{\Pp_\infty})\right\}.\]
\begin{ex} Let $F$ be a rank $1$ torsion-free sheaf with a framing.
We have a canonical map $F \rightarrow F^{\vee \vee} = \cO$ to the double dual,
which must be an injection. This means $F$ must come from an ideal sheaf on the
complement of $\Pp_\infty$ which is $\C^2$. Thus, $\cM(1,n) = \left(\C^2\right)^{[n]}$, the Hilbert
scheme on $\C^2$.
\end{ex}
As this example shows, $\overline{\cM(r,n)}$ is still not compact.
In the next subsection, we will see that 
we must again turn to equivariant integration.

We now give a rough description of what
the integrand $e(T_m M)$ means from the point of view of gauge theory.
Suppose that we would like to integrate not just over the space of connections
on a bundle $E$ (or the above compactification), 
but over connections together with a section of $E^{\vee}\otimes E$. In physics, this 
corresponds to a \emph{matter field} interacting with the field $A$. 
For any fixed field, the literal space
of sections is badly behaved from a mathematical point of view. A related but better
behaved quantity is
\[\chi(F,F) = \Ext^0(F,F)-\Ext^1(F,F)+\Ext^2(F,F)\]
which is a virtual vector space that extends to sheaves $F$ in the larger
moduli space $\overline{\cM(r,n)}$.

The simplest thing we could integrate over this space of ``sections'' is the
element $1$ in cohomology. To make sense of such an integral, we regularize by
adding a scaling action by $u\in\Cx$, and consider an equivariant integral. Localization
the leads to the quantity
\[\int_{\overline{\cM(r,n)}} \frac{1}{e(u\cdot \chi(F,F))},\]
which becomes rigorous after adding a group action to $\overline{\cM(r,n)}$,
and regularizing the infinite-dimensional virtual bundle which is
$\chi(F,F)$ over each point $F$.

To salvage the integral, we write
\[\chi(F,F) = \chi(\cO^r,\cO^r)-\left(\chi(\cO^r,\cO^r)-\chi(F,F)\right).\]
As we note in the next section, the term on the right is isomorphic to the
tangent bundle to $\overline{\cM(r,n)}$. The term on the left is infinite, but 
uninteresting. Taking out it's contribution (called $Z_{pert}$),
the integral that remains is
\[\int_{\overline{\cM(r,n)}} \frac{1}{e(u\chi(F,F)-u\chi(\cO^r,\cO^r))}=
\int_{\overline{\cM(r,n)}} e(T_m\overline{\cM(r,n)}).\]
When $r=1$, this is the partition function from the last section.

\subsection{The Nekrasov partition function}
\label{neksection}
As for the Hilbert scheme, a torus action
$T \circlearrowleft \Pt$ induces a torus action on the moduli space. Unfortunately, this action does not have 
isolated or even compact fixed loci for $r > 1$. To resolve this problem, we follow
\cite{NO} and enlarge the group to include the action of the group $GL(r)$ on the choice of
framing by composition on the right. In fact, it is enough to include the action of the standard
torus $T^r \in GL(r)$.

We can now define the Nekrasov partition function. Consider the action
\[G = T\times T^r \times \Cx \circlearrowleft \overline{\cM(r,n)}\] 
where the second term acts as described, $\Cx$ acts trivially, and
the first $T$ action is induced from the action (\ref{idealsetup}) on $\C^2\subset \Pt$.
The partition function is defined as a product of a factor (called $Z_{pert}$) times the series,
\begin{equation}
Z_{inst}(a_1,...,a_r;t;m,q) = \sum_n q^{rn} \int_{\overline{\cM(r,n)}} e(T_m\overline{\cM(r,n)}).
\end{equation}
We describe the factor below.

The parameters $a_j$ here lie in $\Lie(T^r)$, and $m \in \Cx$ acts by scaling the
tangent bundle as before. To give a combinatorial expression using localization,
we must find the character of the tangent bundle to the fixed sheaves.

\begin{ex} Let $I,J$ be a pair of ideal sheaves such that $\cO/I$,
$\cO/J$ are zero dimensional and supported away from $\Pp_\infty$. Then 
$F=I\oplus J$ is a rank $2$ torsion-free sheaf on $\Pt$. The map $I,J \rightarrow \cO$
restricted to $\Pp_\infty$ gives an automatic framing. Furthermore, each $I,J$ has
a two-step resolution by vector bundles, which shows that $\ch_2(F) = c_2(F) = \dim_\C(\cO/I)+\dim_\C(\cO/J)$,
and $c_1(F) = \ch_1(F) = 0$.
\end{ex} 

\begin{Claim}
\label{sheavesfixed}
The fixed points of the action of $T \times T^r$ on $\overline{\cM(r,n)}$ are the
sheaves $I_{\mu^{[1]}} \oplus \cdots \oplus I_{\mu^{[r]}}$.
\end{Claim}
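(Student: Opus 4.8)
The plan is to use the two torus factors for complementary purposes: the framing torus $T^r$ to split $F$ into a direct sum of rank-one sheaves, and the spatial torus $T$ to pin down each summand as a monomial ideal. The key structural input is that a framed sheaf is \emph{rigid}: the only automorphism of $F$ preserving the framing $\Phi$ is the identity.

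First I would unwind $T^r$-fixedness. Since $T^r$ acts trivially on $\Pt$ and only alters the framing, $(F,\Phi)$ is $T^r$-fixed exactly when for each $h\in T^r$ there is an automorphism $\phi_h$ of $F$ intertwining $\Phi$ with the diagonal action of $h$ on $\cO_{\Pp_\infty}^{\,r}$. By rigidity such a $\phi_h$ is unique, hence $h\mapsto\phi_h$ is a genuine homomorphism $T^r\to\mathrm{Aut}(F)$, i.e.\ a $T^r$-equivariant structure on $F$ lifting the trivial action on the base. Consequently $F$ decomposes into character eigensheaves $F=\bigoplus_\chi F_\chi$, and the framing, which diagonalizes $\cO_{\Pp_\infty}^{\,r}=\bigoplus_i\cO_{\Pp_\infty}\cdot e_i$ with $e_i$ of weight $\chi_i$, forces $F_{\chi_i}|_{\Pp_\infty}=\cO_{\Pp_\infty}\cdot e_i$ and $F_\chi|_{\Pp_\infty}=0$ for all other $\chi$.

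Next I would deduce the splitting into ideal sheaves. Each $F_\chi$ is torsion-free (being a summand of the torsion-free $F$), so a nonzero summand has positive rank; since the ranks sum to $r$ and each $F_{\chi_i}$ already has rank $\geq 1$ from its restriction at infinity, no other character occurs and each $F_{\chi_i}$ has rank exactly $1$. Thus $F=\bigoplus_{i=1}^r F_i$ with $F_i:=F_{\chi_i}$ rank-one and framed by the $i$-th coordinate line, and by the rank-one example above each such $F_i$ is an ideal sheaf $I_i$ of a finite-colength subscheme of $\C^2=\Pt\setminus\Pp_\infty$.

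Finally, because the eigensheaf decomposition is canonical and $T$ commutes with $T^r$, the $T$-action preserves the splitting, so $F$ is $T$-fixed if and only if each $I_i$ is a $T$-invariant ideal on $\C^2$. By the monomial-ideal analysis of the equivariant cohomology section these are precisely the $I_{\mu^{[i]}}$, and $c_2(F)=n$ becomes $\sum_i|\mu^{[i]}|=n$, yielding the stated fixed points. The main obstacle is the passage from the fibrewise diagonalization at $\Pp_\infty$ to a global eigensheaf decomposition of $F$; this is exactly where rigidity of the framing is essential, since it upgrades the pointwise lifts $\phi_h$ into an honest $T^r$-action whose weight spaces are genuine subsheaves.
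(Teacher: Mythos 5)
Your argument is correct, but there is little in the paper to compare it against: the paper gives no proof of Claim \ref{sheavesfixed}, deferring it (together with the tangent-space formula of the following claim) to the reference \cite{O}. What you have written is essentially the standard argument from that literature (Okounkov's lectures, Nakajima--Yoshioka, and the appendix of \cite{NO}): rigidity of framed sheaves upgrades the pointwise fixed-point condition into an honest algebraic $T^r$-equivariant structure on $F$; the weight decomposition is matched against the diagonalized framing at $\Pp_\infty$; torsion-freeness forces every rank-zero eigensheaf to vanish, so exactly $r$ rank-one summands survive; each summand is an ideal sheaf of finite colength in $\C^2$ via $F_i \hookrightarrow F_i^{\vee\vee} \cong \cO$ as in the paper's rank-one example; and commutativity of $T$ with $T^r$ reduces $T$-fixedness of the summands to the monomial-ideal analysis from the equivariant cohomology section. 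Two minor points you could tighten, neither a gap of substance: first, to invoke the eigensheaf decomposition you need $h \mapsto \phi_h$ to be not just a group homomorphism but an algebraic one (this follows from the uniqueness of $\phi_h$ together with coherence, and is routinely glossed over); second, since the claim asserts that the fixed points \emph{are} exactly these sheaves, you should record the easy converse, namely that each $I_{\mu^{[1]}} \oplus \cdots \oplus I_{\mu^{[r]}}$ with its diagonal framing is indeed fixed, because the action of $T \times T^r$ on such a sheaf can be compensated by scaling the summands, monomial ideals being $T$-invariant.
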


Again, as a representation we have
\begin{Claim}
as a virtual representation of $G$, the tangent space to the fixed sheaf
$F=I_{\mu^{[1]}}\oplus \cdots \oplus I_{\mu^{[r]}}$ is
\[T_F\overline{\cM(r,n)} = 
\chi_{\C^2}(\bigoplus_j e^{a_j}\cO,\bigoplus_j e^{a_j} \cO) - \chi_{\C^2}(F,F) = \]
\begin{equation}
\label{TF}
\bigoplus_{i,j} e^{a_i-a_j}\left(\chi_{\C^2}(\cO,\cO)-\chi_{\C^2}(I_{\mu^{[i]}},I_{\mu^{[j]}})\right).
\end{equation}
\end{Claim}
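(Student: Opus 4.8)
The plan is to read off the formula from the deformation theory of framed sheaves, rewrite the resulting twisted Euler characteristic on $\Pt$ as the virtual difference over $\C^2$ that appears in the statement, and then decompose along the splitting $F = I_{\mu^{[1]}}\oplus\cdots\oplus I_{\mu^{[r]}}$. The first and third steps are formal; the reduction to $\C^2$ is where the framing does the real work. First I would identify the tangent space: for a framed sheaf $(F,\Phi)$ on $\Pt$ the standard deformation theory (see Nakajima \cite{Nak2}) gives $T_{(F,\Phi)}\overline{\cM(r,n)} \cong \Ext^1_{\Pt}(F, F\otimes\cO(-\Pp_\infty))$, while the adjacent groups vanish: $\Ext^0$ because a framed sheaf admits no infinitesimal automorphism compatible with $\Phi$, and $\Ext^2$ by Serre duality together with the twist by $-\Pp_\infty$ (here $\cO(-3\Pp_\infty)$ is the canonical bundle of $\Pt$). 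Hence, as a virtual $G$-representation,
\[ T_F\overline{\cM(r,n)} = -\chi_{\Pt}\!\left(F, F\otimes\cO(-\Pp_\infty)\right), \]
where the $T$ factors act through the fixed point on $\Pt$ and the $T^r$ factor acts through the framing, attaching the weight $e^{a_i}$ to the $i$-th summand.

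The crux is the second step: converting this twisted Euler characteristic into the difference $\chi_{\C^2}(\cO^r,\cO^r) - \chi_{\C^2}(F,F)$ of the statement. The trivial framed sheaf $\cO^r$ is rigid, and $\chi_{\Pt}(\cO^r, \cO^r\otimes\cO(-\Pp_\infty)) = r^2\,\chi(\Pt,\cO(-\Pp_\infty)) = 0$ since $H^\ast(\Pt,\cO(-1)) = 0$. I may therefore write
\[ -\chi_{\Pt}(F, F(-\Pp_\infty)) = \chi_{\Pt}(\cO^r, \cO^r(-\Pp_\infty)) - \chi_{\Pt}(F, F(-\Pp_\infty)). \]
Since the framing provides an isomorphism $F|_{\Pp_\infty}\cong \cO^r|_{\Pp_\infty}$, the sheaves $F$ and $\cO^r$ agree on a neighborhood of $\Pp_\infty$, so the two twisted Euler characteristics differ only by contributions supported on $\C^2$. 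Comparing the restriction sequences of $F$ and of $\cO^r$ along $\Pp_\infty$ and cancelling the matching boundary terms identifies the right-hand side with the regularized difference $\chi_{\C^2}(\cO^r,\cO^r)-\chi_{\C^2}(F,F)$, exactly as in the rank-one tangent formula $T_I\Ctn \cong \chi(\cO,\cO)-\chi(I,I)$ already in force for the Hilbert scheme. The main obstacle is precisely this regularization: each of $\chi_{\C^2}(\cO^r,\cO^r)$ and $\chi_{\C^2}(F,F)$ is separately infinite, with only its $T$-eigenspaces finite-dimensional, so I must argue that the twist-and-subtract on $\Pt$ is exactly the operation that renders the $\C^2$-difference finite and agrees with it as a virtual $T$-representation. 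I would secure this by the excision comparison above, where the framing isomorphism on $\Pp_\infty$ supplies the cancellation.

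The final step is pure bilinearity of $\chi$. Substituting $F = \bigoplus_i e^{a_i} I_{\mu^{[i]}}$ into $\chi_{\C^2}(\cO^r,\cO^r)-\chi_{\C^2}(F,F)$ and expanding in each variable yields
\[ \sum_{i,j} e^{a_i-a_j}\left(\chi_{\C^2}(\cO,\cO)-\chi_{\C^2}(I_{\mu^{[i]}},I_{\mu^{[j]}})\right), \]
where the factor $e^{a_i-a_j}$ records the $T^r$-weight of $\sHom$ between the $i$-th and $j$-th framed summands, in the convention where $T^r$ acts on the framing by right composition. The diagonal terms $i=j$ are the Hilbert scheme tangent spaces, so only the mixed terms require a new computation, and these are governed by the same character formula used in Lemma \ref{Lhooks}.
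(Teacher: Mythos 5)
Your proposal is correct in substance, but there is nothing in the paper to compare it against: immediately after stating the Claim, the text says only ``This too is in \cite{O}.'' The paper offers no internal argument, deferring entirely to the literature. What you have written is a reconstruction of the standard proof found in the cited sources (Okounkov \cite{O}, Nekrasov--Okounkov \cite{NO}, and Nakajima--Yoshioka \cite{Nak4}), and it is the right one: (i) the deformation-theoretic identification $T_{(F,\Phi)}\overline{\cM(r,n)} \cong \Ext^1_{\Pt}(F,F(-\Pp_\infty))$ together with the vanishing of $\Ext^0$ and $\Ext^2$ (your Serre-duality argument for $\Ext^2$, and triviality of $F|_{\Pp_\infty}$ forcing $\Ext^0=0$, is exactly how it is done there); (ii) comparison with the trivial framed sheaf using $\chi_{\Pt}(\cO^r,\cO^r(-\Pp_\infty))=0$; (iii) bilinearity of $\chi$, the weights $e^{a_i-a_j}$ recording the $T^r$-action on the framing. (The transposition of $i$ and $j$ relative to the paper's indexing is a harmless relabelling, invisible after summing over all pairs.)

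The one place I would press you is step (ii), which you rightly identify as the crux. ``Comparing the restriction sequences and cancelling the matching boundary terms'' is too vague to stand as written. The clean way to make the excision rigorous---and the way the references do it---is equivariant localization: the $G$-character of each $\chi_{\Pt}$ term is a sum of local contributions at the isolated torus-fixed points of $\Pt$ (the origin of $\C^2$ and the two fixed points on $\Pp_\infty$). The framing gives an equivariant isomorphism between $F$ and $\bigoplus_j e^{a_j}\cO$ in a neighborhood of $\Pp_\infty$, and $\cO(-\Pp_\infty)$ is equivariantly trivial on $\C^2$, so in the difference $\chi_{\Pt}(\cO^r,\cO^r(-\Pp_\infty))-\chi_{\Pt}(F,F(-\Pp_\infty))$ the contributions of the fixed points lying on $\Pp_\infty$ cancel, and what survives is precisely the difference of the local contributions at the origin, i.e.\ the regularized $\chi_{\C^2}(\cO^r,\cO^r)-\chi_{\C^2}(F,F)$: a difference of rational functions in the torus weights that is in fact a finite character. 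With that substitution for your hand-waved cancellation, the argument is complete, and it is also the mechanism implicitly underlying Lemma \ref{Lhooks} and the localization formula (\ref{Z}).
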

This too is in \cite{O}. Once again, we apply the localization theorem to get
\begin{equation}
Z_{inst} = \sum_{\mu^{[1]},...,\mu^{[r]}}q^{rn}
\prod_k\prod_{\Box \in \mu^{[k]}}
\frac{(m+a_i-a_j+t\hook(\Box))(m+a_i-a_j-t\hook(\Box))}
{(a_i-a_j+t\hook(\Box))(a_i-a_j-t\hook(\Box))}
\label{Z}
\end{equation}
with $n=|\mu^{[1]}|+\cdots+|\mu^{[r]}|$.

\subsection{The Dual partition function}

It seems natural here to define the vertex operator analogously to the operator $\bW(\cL,z)$
on the Hilbert scheme. While there are operators analogous to the Nakajima
operators on cohomology, (cf. Licata \cite{Lic} and Baranovsky \cite{Ba}), their commutation relations 
are those of the affine Lie algebra $\slc$, which are more complicated than those of
the Heisenberg algebra. At any rate, it is unclear what should replace
theorem \ref{mainT}.

Instead, we study the \emph{dual partition function}, which is a reduction of $Z$
to the rank $1$ case. Among other things, this has the advantage that we may
use the vertex operator $\bW(m)$ as it is. We see that in the analysis of
$Z^{\vee}$, the representation of $\slc$ on $\focko$ is critical. 
We have no geometric basis for introducing $\slc$, but its appearance is perhaps not
surprising in light of the $\slc$ action above. It seems likely that the original
partition function may be studied using a generalization of $\bW(m)$,
and the natural $\slc$ action.

Given any collection of partitions $\mu^{[1]},...,\mu^{[r]}$ as
in expression (\ref{Z}), and a collection of integers $b_1+\cdots+b_r=0$,
we define the \emph{blended partition}
\begin{equation}
\mu = \bl(b_1,...,b_r;\mu^{[1]},...,\mu^{[r]})
\end{equation}
as the unique partition such that.
\[\left\{\mu_j-j+1\right\} = \bigsqcup_k \left\{r(\mu^{[i]}_j-j+b_i)+i\right\}\]
Thus, we have a bijection between the set of partitions and the set of
$r$-tuples of partitions together with an $r$-tuple of integers summing to $0$.

Now assume $r=2$. We recover the functions $w_m(\mu^{[1]},\mu^{[2]})$ from $w_m(\mu)$.
Using the expression (\ref{TF}), we see that
\[\frac{w_m(\mu)}{w_m(\nu(b))} = 
w_m(\mu^{[1]},\mu^{[2]})|_{t=2t,a_1=2bt,a_2=(-2b-1)t},\]
where
\[\nu(b) = \bl(b,-b;\emptyset,\emptyset) = 
\begin{cases} (2b,2b-1,...,1,0) & b \geq 0 \\
(-2b-1,-2b-2,...,1,0) & b < 0
\end{cases}
\]
Also,
\[|\mu| = 2|\mu^{[1]}|+2|\mu^{[2]}|+2b^2+b.\]

We now define the dual partition function:
\[Z^{\vee}(\zeta;t;m,q) = \sum_{2b \in \Z} e^{\zeta \cdot b}  w_m(\nu(b)) q^{2b^2+b}Z_{inst}(2bt,(-2b-1)t;2t;m,q).\]
According to the relations above, and again assuming without loss of generality that $t=1$,
\begin{equation}
Z^{\vee}(\eta;m,q) = \sum_\mu q^{|\mu|}e^{2\zeta\cdot b(\mu)} w_m(\mu).
\end{equation}
This is a sort of Fourier transform of the original function $Z$, with 
weights $e^{\zeta \cdot b}$ keeping track of the shift $b$
of the partition $\mu$, often referred to as the \emph{charge} of $\mu$
(not the same the previous notion of charge).
The remainder of the paper focuses on an analog of theorem \ref{quasiT} for this function.

\subsection{Charge and the affine Lie algebra}

To employ the vertex operator, we must represent the operator
$h_0:\focko \rightarrow \focko$ given by
\[v_\mu \mapsto 2b v_\mu,\]
where $b$ is the charge of $\mu$ from the previous subsection. In fact, this
operator can be expressed in terms of a natural representation of the affine
lie algebra $\slc$ on $\focko$. The representation can be found in Ka\c{c} \cite{K},
but we describe it briefly here.

Let
\[e = \left(\begin{array}{cc} \quad &1 \\ & \quad \end{array}\right), \quad
h = \left(\begin{array}{cc} 1 & \quad \\ \quad  & -1\end{array}\right), \quad
f = \left(\begin{array}{cc} \quad  & \\ 1& \quad \end{array}\right),\]
be basis vectors of $sl_2\C$. We obtain an action of the
Lie algebra $\C[t,t^{-1}]\otimes sl_2\C$ on $V=\C\cdot\Z$ by
identifying $V$ with $\C[t,t^{-1}] \otimes \left(\C\cdot u_0 \oplus \C\cdot u_1\right)$ by
\[t^j \otimes u_i \leftrightarrow v_{-2j-i}\]
Associate $x_j = t^j \otimes x$ for $x=e,h,f$. Other than $h_0$, each operator $x_j$ 
induces an operator on $\focko$ defined by
\[x_j\cdot v_{i_1} \wedge v_{i_2} \wedge \cdots = 
\sum_k v_{i_1} \wedge \cdots \wedge \left(x_j\cdot v_{i_k}\right) \wedge \cdots\]
$h_0$, however, leads to an infinite sum. The solution is to subtract the infinite part
of the sum:
\[h_0 v_I = h_0 \cdot v_{i_1} \wedge v_{i_2} \wedge \cdots = \]
\[\left(|I \cap \Z^{even}_{>0}|-|I \cap \Z^{even}_{\leq 0}|-
|I \cap \Z^{odd}_{>0}|+|I \cap \Z^{odd}_{\leq 0}|\right)v_I\]
This gives a projective representation of $\C[t,t^{-1}]\otimes sl_2\C$, which extends to 
an honest representation of $\slc$ by sending $K \mapsto 1$. In this representation, we
see that
\[h_0\cdot v_{\mu} = 2b(\mu)v_{\mu},\]
where $b(\mu)$ is the charge as defined above. To understand $Z^{\vee}$
We must understand the interaction between $h_0$ and the vertex operator.

\subsection{The Principal vertex operator construction}

Strategy (\ref{strat}) has allowed us to compute correlations only for operators
on $\cF$ expressed in the Nakajima basis. To understand $Z$, we must find such
an expression for
\[h_0 : \focko \rightarrow \focko.\]
In fact, the entire action of $\slc$ admits such a description, known as 
the \emph {principal vertex operator construction} \cite{K}:
\begin{Proposition} (principal vertex operator construction for $\slc$).
\label{pvoc}
Let
\[\Gamma^{odd}(z) = \sum_n \Gamma^{odd}_nz^n = \Gamma_-^{odd}(z)^2\Gamma_+^{odd}(z)^{-2},\]
where
\[\Gamma_{\pm}^{odd}(z) = \exp\left(\sum_{n>0,\ n\ odd} \frac{z^{\mp n}}{n}\alpha_{\pm n}\right).\]
The representation $\focko$ of $\slc$ is given in the Nakajima basis by
\[2d+h_0 \mapsto \sum_{j\geq 0} \alpha_{-2j-1}\alpha_{2j+1},\quad 
K \mapsto 1,\quad e_j+f_{j+1} \mapsto \alpha_{2j+1},\quad\]
\begin{equation}
h_j \mapsto [z^{2j}]\frac{1}{2}\left(1-\Gamma^{odd}(z)\right),\quad
e_j-f_{j+1} \mapsto [z^{2j+1}]\frac{1}{2}\left(1-\Gamma^{odd}(z)\right)
\label{h}
\end{equation}
\end{Proposition}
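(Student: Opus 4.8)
The statement is the classical principal vertex operator construction of Lepowsky--Wilson, presented in Ka\c{c} \cite{K}; the plan is to derive the explicit bosonic formulas from the boson--fermion correspondence already recorded in Proposition \ref{BF}, taking care to reconcile the normalizations used here. First I would realize the $\slc$-action on $\focko$ through the embedding of $\slc$ into (a central extension of) the Lie algebra $\mathfrak{gl}_\infty$ of finite-band infinite matrices $E_{mn}$, where $E_{mn}$ acts on the infinite wedge as the normal-ordered fermionic bilinear $:\!\psi_m\psi^*_n\!:$. Under the identification $t^j\otimes u_i \leftrightarrow v_{-2j-i}$, each loop generator $x_j = t^j\otimes x$ (for $x=e,h,f$) becomes an explicit locally finite sum of such matrix units, hence an explicit fermionic bilinear; the central element $K$ absorbs the cocycle and acts by the scalar $1$, giving $K\mapsto 1$ and exhibiting $\focko$ as the basic (level one) representation.

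With this setup the images of the diagonal and off-diagonal generators follow by direct bookkeeping in the Clifford algebra, using $\alpha_k=\sum_n\psi_n\psi^*_{n+k}$ (Proposition \ref{BF}). Writing $e_j,f_j,h_j$ as sums of matrix units $E_{mn}$ via the identification $t^j\otimes u_i\leftrightarrow v_{-2j-i}$, one finds that the off-diagonal generators split into pieces supported on the even- and odd-index lattice sites; the symmetric combination $e_j+f_{j+1}$ recombines these into the single fermionic current $\sum_n\psi_n\psi^*_{n+2j+1}=\alpha_{2j+1}$, identifying the principal Heisenberg subalgebra of $\slc$ with the odd Nakajima modes. (Getting the index alignment exactly right here is a matter of care with the $t^j\otimes u_i\leftrightarrow v_{-2j-i}$ convention, which is where the relative shift between the two summands is absorbed.) The operator $2d+h_0$ is the principal degree operator, and rewriting the fermionic energy in bosonic form via Proposition \ref{BF} yields the quadratic expression $\sum_{j\ge 0}\alpha_{-2j-1}\alpha_{2j+1}$, again built from the odd modes only.

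The main step is then the vertex-operator formula for the remaining generators $h_j$ and $e_j-f_{j+1}$. I would assemble them into the single generating field
\[
\sum_{j}\Bigl(h_j\,z^{2j}+(e_j-f_{j+1})\,z^{2j+1}\Bigr),
\]
and show it equals $\tfrac12\bigl(1-\Gamma^{odd}(z)\bigr)$. Concretely, this field is a normal-ordered fermionic current $:\!\psi(u)\psi^*(v)\!:$ restricted to the ``principal diagonal'' in $u,v$ dictated by $t^j\otimes u_i\leftrightarrow v_{-2j-i}$; substituting the vertex expressions $\psi(x)=x^{\alpha_0}Q\,\Gamma_-(x)\Gamma_+(x)^{-1}$ and $\psi^*(x)=Q^{-1}x^{-\alpha_0}\Gamma_-(x)^{-1}\Gamma_+(x)$ from Proposition \ref{BF} and carrying out the contraction converts the product of four exponentials into $\Gamma_-^{odd}(z)^2\Gamma_+^{odd}(z)^{-2}=\Gamma^{odd}(z)$, with the $Q^{\pm1}$ and $x^{\pm\alpha_0}$ factors cancelling because $h_j$ and $e_j-f_{j+1}$ preserve charge. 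The constant term $1$ and the overall factor $\tfrac12$ come from the normal-ordering contraction constant and from the $z\mapsto -z$ (anti)symmetrization that separates the even part $h_j$ from the odd part $e_j-f_{j+1}$.

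The hard part is precisely this last computation: tracking the normal-ordering constant that produces the ``$1-$'', and, more essentially, explaining why the full vertex operator $\Gamma(z)$ collapses to the purely odd, squared operator $\Gamma^{odd}(z)=\Gamma_-^{odd}(z)^2\Gamma_+^{odd}(z)^{-2}$. Both features are consequences of the two-to-one nature of the principal identification, under which each loop generator spans a pair of adjacent opposite-parity lattice sites, doubling the surviving Heisenberg modes and annihilating the even ones. Since every operator above is realized inside the Clifford algebra acting on $\focko$, which already carries the basic representation of $\slc$, the affine commutation relations hold automatically and need not be reverified; the entire content of the proposition is the explicit bosonic description, which then matches Ka\c{c} \cite{K}, Theorem 14.10, after the change of conventions.
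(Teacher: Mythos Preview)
The paper gives no proof of this proposition: immediately after the statement it simply says ``This can be found in \cite{K}, chapter 14.'' So there is nothing to compare your argument against beyond the citation itself. Your sketch is a correct outline of the standard derivation---realize $\slc$ inside (a central extension of) $\mathfrak{gl}_\infty$ via the identification $t^j\otimes u_i\leftrightarrow v_{-2j-i}$, pass to normal-ordered fermionic bilinears on $\focko$, and then bosonize using Proposition~\ref{BF}---and this is essentially the route taken in Ka\c{c}'s chapter 14, so you are not proposing a genuinely different approach, just filling in what the paper chose to outsource.

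Two small remarks. First, your closing reference to ``Ka\c{c} \cite{K}, Theorem 14.10'' is the boson--fermion correspondence already cited in Proposition~\ref{BF}; the principal realization of $\slc$ is elsewhere in chapter 14, so adjust the pointer. Second, the index-matching you flag as ``a matter of care'' is indeed the only place one can go wrong: with the paper's convention $t^j\otimes u_i\leftrightarrow v_{-2j-i}$, the matrix units for $e_j$ and $f_{j+1}$ shift indices on the odd and even sublattices by different amounts, and it is only the particular combination $e_j+f_{j+1}$ that assembles into the uniform shift $\alpha_{2j+1}$. If you actually write this out, do the bookkeeping explicitly rather than asserting it; the rest of your plan (the collapse of $\Gamma$ to $\Gamma^{odd}$, the normal-ordering constant producing the ``$1-$'', and the $\tfrac12$ from symmetrization) is accurate.
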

This can be found in \cite{K}, chapter 14.
\begin{ex} \[h_0\cdot v_{[1]} = [z^0] \Gamma(z) v_{[1]} = 
\frac{1}{2}\left(1-(1+4\alpha_{-1}\alpha_{1})\right)v_{[1]}=\]
\[\frac{1-5}{2}v_{[1]} = -2v_{[1]}.\]
$-2v_{[1]}$ is indeed $h_0$ of $v_{[1]} = v_{1}\wedge v_{-1}\wedge v_{-2} \wedge \cdots$ 
since there is one additional positive odd index, and one missing nonpositive even index.
\end{ex}

\section{Vector-Valued Modularity}
Now that we have enough vertex operators, let us attempt an analogue of theorem \ref{quasiT}
for the Nekrasov partition function. We will see that this leads us to a
modularity condition for vector-valued operators.

\subsection{A Vertex operator calculation}

Let
\[G(k;m,q) = [a_1^0\cdots a_k^0] G(a_1,...,a_k;m,q),\]
\begin{equation}
G(a_1,...,a_k;m,q) = \Tr q^d \Gamma^{odd}(a_1)\cdots\Gamma^{odd}(a_k)\Gamma^m(1)\Gamma^{-m}(1),
\end{equation}
\[ |1/q| > |a_1| > \cdots > |a_k| > 1.\]
so that
\begin{equation}
Z_k^{\vee}(m,q) = \frac{1}{2^k}\left(G(0;m,q)-
\left(\begin{array}{c}k \\ 1 \end{array}\right)
G(1;m,q)+
\left(\begin{array}{c}k \\ 2 \end{array}\right)
G(2;m,q)-\cdots\right),
\end{equation}
where $Z^{\vee}_k(m,q) = [\zeta^k] Z^\vee(\zeta;m,q).$
\begin{Lemma}
\begin{equation}
G(a_1,...,a_k;m,q) = \frac{\left(\prod_{1\leq i < j \leq n}\theta(\frac{a_i}{a_j},q)^2\right)
\left(\prod_{1\leq i \leq n}\theta(a_i,q)^m\right)
(q;q)_\infty^{2k+m^2-1}}
{\left(\prod_{1\leq i < j \leq n}\theta(-\frac{a_i}{a_j},q)^2\right)
\left(\prod_{1\leq i \leq n}\theta(-a_i,q)^m\right)
(-q;q)_\infty^{2k}}
\label{Grtheta}
\end{equation}
\end{Lemma}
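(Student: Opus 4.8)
The plan is to compute the trace by exactly the boson-fermion technique already used for $Z(m,q)$ and for $G(x_1,\dots,x_N;\dots)$ in Section \ref{quasi}: expand each operator into its creation and annihilation halves, commute all annihilation ($\Gamma_+$-type) factors to the right and creation ($\Gamma_-$-type) factors to the left, and then cycle the latter around the trace using $q^d\,\Gamma_-(z)\,q^{-d}=\Gamma_-(zq)$ to build up infinite products. Writing $\Gamma^{odd}(a)=\Gamma_-^{odd}(a)^2\,\Gamma_+^{odd}(a)^{-2}$ and $\Gamma^m(1)\Gamma^{-m}(1)=\Gamma_-(1)^m\Gamma_+(1)^{-m}$, the word inside the trace becomes an alternating product of $\Gamma_\pm^{odd}$ and $\Gamma_\pm(1)$ factors, so the whole computation reduces to a careful bookkeeping of scalar commutation factors.

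First I would record the commutation relations for the odd operators, the odd analogue of Lemma \ref{CR}. Since $[\alpha_n,\alpha_{-n}]=n$ and only odd modes appear, the Baker-Campbell-Hausdorff computation gives
\[
\Gamma_+^{odd}(x)\,\Gamma_-^{odd}(y)=\exp\!\Big(\sum_{n>0,\ n\ odd}\tfrac{1}{n}(y/x)^n\Big)\,\Gamma_-^{odd}(y)\,\Gamma_+^{odd}(x)=\Big(\tfrac{1+y/x}{1-y/x}\Big)^{1/2}\Gamma_-^{odd}(y)\,\Gamma_+^{odd}(x),
\]
using $\sum_{n\ odd}w^n/n=\tfrac12\log\frac{1+w}{1-w}$. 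The crucial feature, responsible for the appearance of \emph{both} $\theta(\cdot)$ and $\theta(-\cdot)$ in the answer, is that the odd sum produces the ratio $\frac{1+w}{1-w}$ rather than the single factor $1-w$ of the even case. Squaring, from the exponents $2$ and $-2$, turns this into $\big(\tfrac{1-a_j/a_i}{1+a_j/a_i}\big)^2$ when $\Gamma_+^{odd}(a_i)^{-2}$ is carried past $\Gamma_-^{odd}(a_j)^2$, and the analogous computation against $\Gamma_-(1)^m$ and $\Gamma_+(1)^{-m}$ (only the odd part of which contracts) produces the $m$-th powers that become $\theta(a_i;q)^m/\theta(-a_i;q)^m$.

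Next I would run the trace cyclically, exactly as in Section \ref{partitionsection}: each time a creation factor $\Gamma_-^{odd}$ or $\Gamma_-(1)$ is carried around the trace past $q^d$ its argument is rescaled by $q$, and repeating the passage accumulates a geometric sequence of the scalar factors above, which telescopes into the $q$-Pochhammer products $(\,\cdot\,;q)_\infty$. Collecting the $\binom{k}{2}$ pairwise factors, the $m$-th powers from the mass insertions, and the self-contraction of each $\Gamma^{odd}(a_i)$ with itself, one carries $\Gamma_+^{odd}(a_i)^{-2}$ past $\Gamma_-^{odd}(a_i q^n)^2$ so the per-operator factor is $\prod_{n\ge1}\big(\tfrac{1-q^n}{1+q^n}\big)^2=(q;q)_\infty^{2}/(-q;q)_\infty^{2}$, giving exactly the $(q;q)_\infty^{2k}$ in the numerator and $(-q;q)_\infty^{2k}$ in the denominator. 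Rewriting each infinite product via $\theta(w;q)=(wq;q)_\infty(w^{-1};q)_\infty(q;q)_\infty^{-2}$ yields the stated formula, with the base factor $Z(m,q)=(q;q)_\infty^{m^2-1}$, the $k=0$ specialization from Section \ref{partitionsection}, accounting for the $m^2-1$ in the exponent.

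The main obstacle is purely organizational: keeping the signs and the doubled exponents straight so that the positive-argument contractions assemble into $\prod_{i<j}\theta(a_i/a_j;q)^2\prod_i\theta(a_i;q)^m$ in the numerator, while the $1+w$ factors assemble into $\prod_{i<j}\theta(-a_i/a_j;q)^2\prod_i\theta(-a_i;q)^m$ in the denominator, and checking that the self-contraction powers come out as $2k$ on each side. The domain restriction $|1/q|>|a_1|>\cdots>|a_k|>1$ is precisely what justifies the chosen ordering of commutations and the convergence of the cyclic resummation, just as the constraint (\ref{order}) does in the Hilbert scheme case.
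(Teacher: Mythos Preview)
Your proposal is correct and follows essentially the same approach as the paper: the paper's own proof simply says the lemma follows by the method of Proposition~\ref{theta} together with the odd commutation relations $\Gamma_+^{odd}(x)\Gamma_-^{odd}(y)=\sqrt{\tfrac{1+y/x}{1-y/x}}\,\Gamma_-^{odd}(y)\Gamma_+^{odd}(x)$ and the analogous mixed relations with $\Gamma_\pm$, which is exactly what you derive and use. Your write-up is in fact more detailed than the paper's, and your bookkeeping of the self-contraction factor $\prod_{n\ge1}\bigl(\tfrac{1-q^n}{1+q^n}\bigr)^2=(q;q)_\infty^2/(-q;q)_\infty^2$ is a helpful addition.
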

\begin{proof}
This follows from an almost identical approach to proposition \ref{theta}, 
and the commutation relations
\[\Gamma_+^{odd}(x)\Gamma_-^{odd}(y) = \Gamma_-^{odd}(y)\Gamma_+^{odd}(x)
\sqrt{\frac{1+\frac{y}{x}}{1-\frac{y}{x}}},\]
\[\Gamma_+^{odd}(x)\Gamma_-(y) = \Gamma_-(y)\Gamma_+^{odd}(x)
\sqrt{\frac{1+\frac{y}{x}}{1-\frac{y}{x}}},\]
\[\Gamma_+(x)\Gamma_-^{odd}(y) = \Gamma_-^{odd}(y)\Gamma_+(x)
\sqrt{\frac{1+\frac{y}{x}}{1-\frac{y}{x}}}.\]
\end{proof}

\subsection{An Auxilliary Riemann surface}
$G(k;m,q)$ is an integral of $G(a_1,...,a_k;m,q)$, which we now consider.
The difficulty in the integration comes from the
piece involving the term $m$, since it is not a meromorphic function 
when $m \notin \mathbb{Z}$. To study these integrals we must first find
the Riemann surface on which the function lives. We choose to use the
function
\[\theta_{11}(x,q) = \theta(xq^{\frac{1}{2}},q)(q;q)_\infty^3 = \]
\begin{equation}
\prod_{k\geq 1} (1+xq^{k-\frac{1}{2}})(1+x^{-1}q^{k-\frac{1}{2}})(1-q^k) = \sum_n x^nq^{\frac{n^2}{2}}
\end{equation}
in place of $\theta$ since it is more convenient for picturing the Riemann surface,
and is the standard normalization for presenting the \emph{functional equation} for
rank-$1$ theta functions. Making this replacement in $G$ has no effect on the integrals $G(k;m,q)$,
which are the quantities of interest.

Let
\[f(z,\tau) = \frac{\theta_{11}(z-\frac{1}{4},\tau)}{\theta_{11}(z-\frac{3}{4},\tau)}.\]
Then
\begin{equation}
\label{fintegral}
G(1;m,q) = \frac{(q;q)_\infty^{2+m^2-1}}{(-q;q)_\infty^2}\int_0^1 f(z,\tau)^m ,
\end{equation}
which turns out to be the critical step. The branch of $f(z,\tau)^m$ must
be chosen so that $f(0,\tau)^m = e^{\pi im}$ for (\ref{fintegral}) to hold.

$f(z,\tau)$ has a pole of order $1$ at the points 
$\frac{1}{4}+\frac{\tau}{2}+\Z+\tau\Z$, and a zero of order 1 at
$\frac{3}{4}+\frac{\tau}{2}+\Z+\tau\Z$. It follows that analytically continuing
$f(z,\tau)^m$ under any choice of branch in a counter-clockwise loop around
the point $\frac{1}{4}+\frac{\tau}{2}$ amounts to multiplication by $e^{-2\pi im}$.
One may also check that analytically continuing along the path $[0,\tau]$ multiplies
by $e^{\pi im}$.
\begin{figure}[htp]
\centering
\scalebox{0.44}{\includegraphics{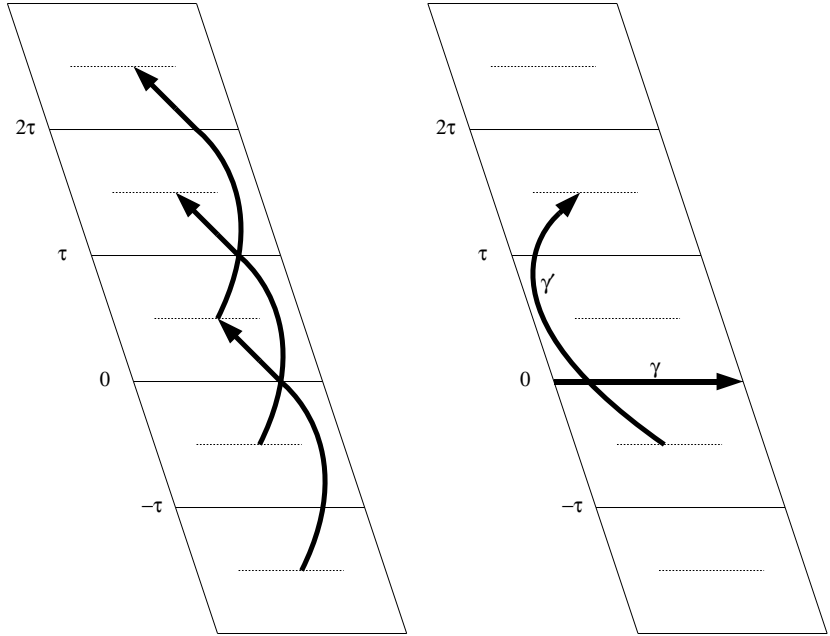}}
\caption{The Riemann surface $R$ is shown on the left. Here $z$ is glued to $z+1$.
On the left are two examples of homology cycles.}
\label{fig:riemann}
\end{figure}

$f(z,\tau)^m$ is a well-defined function on a Riemann surface $R$ defined as follows:
beginning with $\C$, glue $z$ to $z+1$. Make an incision between each pair of points
\[\left(\frac{1}{4}+\frac{\tau}{2}+a+b\tau,\frac{3}{4}+\frac{\tau}{2}+a+b\tau\right),\]
for pairs of integers $(a,b)$. Connect the region below the cut corresponding to
$(a,b)$ to the region above the cut corresponding to $(a,b-2)$. The rules in
the above paragraph show that $f(z,\tau)^m$ extends analytically across these gluings.
The surface and two important cycles are shown in figure (\ref{fig:riemann}).

\subsection{A Representation of a modular subgroup}

We describe how $f(z,\tau)^m$ behaves under changes of the coordinate $\tau$
in terms of an action of a subgroup $\Gamma \subset \slz$ on the homology cycles of the
Riemann surface $R$. Given any path $c$ in $\C^2$ avoiding the endpoints of the slits
$\frac{1}{4}+\frac{\tau}{2}+\frac{1}{2}\Z+\tau\Z$, we obtain a path in $R$.
If we identify any cycle $c+\tau$ with $e^{\pi im}c$, and make use of the
moves shown in figure (\ref{fig:move}), we obtain an element 
$[c] \in \C\cdot[\gamma] \oplus \C\cdot [\gamma']$. The identification
$[c+\tau]$ with $e^{\pi im}[c]$ is acceptable because we are
interested in evaluating
\[\int_cdz\ f(z,\tau)^m,\]
which is preserved since $f(z+\tau,\tau)^m = e^{\pi im}f(z,\tau)^m$. 
\begin{figure}[htp]
\centering
\scalebox{0.44}{\includegraphics{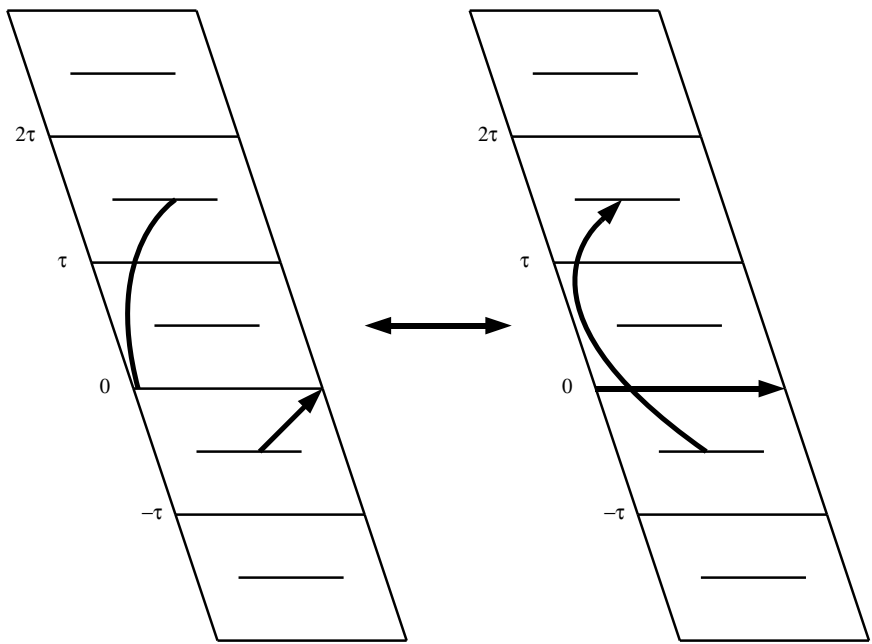}}
\caption{The cycles above are homotopy equivalent.}
\label{fig:move}
\end{figure}

Now let $\Gamma \subset \slz$ be the subgroup
\[\Gamma = \left(\begin{array}{cc} 4 & \\ & 1 \end{array}\right)\cdot \Gamma_{1,8}\cdot
\left(\begin{array}{cc} 4 & \\ & 1 \end{array}\right)^{-1} =\]
\[\left\{\left(\begin{array}{cc} a & b\\ c & d \end{array}\right): a,d \cong 1\ \mbox{(mod $8$)},
b \cong 0\ \mbox{(mod $4$)}, c\ \cong 0\ \mbox{(mod $2$)}\right\}.\]
Since $\Gamma$ preserves the points $\frac{1}{4}+\frac{\tau}{2}+\frac{1}{2}\Z+\tau\Z$
via its action on $\R^2 \cong \R\cdot \tau\oplus \R\cdot 1 \cong \C$, we obtain a representation 
\[\rho_m : \Gamma \rightarrow \C\cdot\gamma \oplus \C\cdot \gamma' \cong \C^2\]
by
\begin{equation}
\label{rho}
\rho_m(A)(a[\gamma]+b[\gamma']) = a[A\cdot \gamma]+b[A\cdot \gamma'].
\end{equation}
To write $[A\cdot\gamma^{(\epsilon)}]$ as a linear combination of $[\gamma]$, $[\gamma']$,
we use the equivalence $[\gamma^{(\epsilon)}+\tau] = e^{\pi im}[\gamma^{(\epsilon)}]$, and the homotopy equivalence in figure
\ref{fig:move}.

\begin{ex}
Let
\[A = \left(\begin{array}{cc} 1 & 4 \\ & 1 \end{array}\right).\]
Making use of the move (\ref{fig:move}), we see that
\[[A\cdot \gamma] \cong e^{-\pi im}[\gamma]+(1+e^{-\pi im})[\gamma'],\quad
[A\cdot \gamma'] \cong e^{\pi im}[\gamma'],\]
so that
\[\rho_m(A) = \left(\begin{array}{cc} e^{-\pi im} &  \\ 1+e^{-\pi im} & e^{\pi im} \end{array}\right).\]
\end{ex}

\subsection{Vector-valued modularity of $Z^\vee$ in rank $2$}
\begin{Lemma}
The vector-valued function $v(\tau)$ given by
\[v(\tau)_1 = \int_\gamma dz\ f(z,\tau)^m,\quad v(\tau)_2 = \int_{\gamma'} dz\ f(z,\tau)^m\]
satisfies the relation 
\begin{equation}
v(A\cdot \tau) = v\left(\frac{a\tau+b}{c\tau+d}\right)=\frac{1}{c\tau+d}\ \rho_m(A^t)\cdot v(\tau).
\end{equation}
\label{vv}
\end{Lemma}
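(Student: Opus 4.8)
The plan is to reduce the statement to the transformation law of the single meromorphic $1$-form $\omega_\tau = f(z,\tau)^m\,dz$ on the cylinder $\C/\Z$, slit as in Figure \ref{fig:riemann}, and then to read off the matrix from how the underlying cycles move. Fix $A = \left(\begin{smallmatrix} a & b \\ c & d\end{smallmatrix}\right)\in\Gamma$. The biholomorphism $\phi_A\colon z\mapsto (c\tau+d)z$ carries the lattice $\Z+A\tau\,\Z$ to $\Z+\tau\,\Z$ and, by the congruences defining $\Gamma$ (the same conditions that, in the discussion preceding (\ref{rho}), make the $\R$-linear $A$-action preserve the slit set), carries the slit points $\tfrac14+\tfrac{A\tau}{2}+\tfrac12\Z+A\tau\,\Z$ onto $\tfrac14+\tfrac{\tau}{2}+\tfrac12\Z+\tau\,\Z$. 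Substituting $w=(c\tau+d)z$ in $v_i(A\tau)=\int_{\gamma_i} f(z,A\tau)^m\,dz$ produces the Jacobian $(c\tau+d)^{-1}$, so everything comes down to identifying $f\!\left(\tfrac{w}{c\tau+d},A\tau\right)^m$ with $f(w,\tau)^m$ up to a phase, and to tracking where the cycle $\gamma_i$ is sent.

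For the first point I would invoke the standard $\slz$ transformation law for $\theta_{11}$. Since $f$ is a ratio of two translates of $\theta_{11}$, both of weight $\tfrac12$, the automorphy factor $(c\tau+d)^{1/2}$ cancels and $f$ behaves with weight $0$; the leading Gaussians $e^{\pi i c(\cdot)^2/(c\tau+d)}$ also largely cancel, leaving only the linear residue coming from the difference of the shifts $\tfrac14$ and $\tfrac34$, an exponent proportional to $\pi i c\bigl(w-\tfrac{c\tau+d}{2}\bigr)$. This residue, together with the quasi-period corrections that relate the shifted theta-arguments $w-\tfrac{c\tau+d}{4}$ and $w-\tfrac{3(c\tau+d)}{4}$ back to $w-\tfrac14$ and $w-\tfrac34$, should collapse — thanks to the congruences $a,d\equiv 1\pmod 8$, $b\equiv 0\pmod 4$, $c\equiv 0\pmod 2$ — to a product of the elementary quasi-period phases $e^{\pi i m}$ under $w\mapsto w+\tau$. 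These are precisely the factors that the identification $[c+\tau]=e^{\pi i m}[c]$ was set up to absorb, so on twisted homology $f\!\left(\tfrac{w}{c\tau+d},A\tau\right)^m\,dw$ and $f(w,\tau)^m\,dw$ represent the same class and $v_i(A\tau)=\tfrac{1}{c\tau+d}\int_{\phi_A(\gamma_i)}\omega_\tau$.

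It remains to expand $[\phi_A(\gamma_i)]$ in the basis $\{[\gamma],[\gamma']\}$. Here I would use the homotopy moves of Figure \ref{fig:move} and the rule $[c+\tau]=e^{\pi i m}[c]$, which is exactly the recipe defining $\rho_m$ in (\ref{rho}); the worked case $A=\left(\begin{smallmatrix}1&4\\0&1\end{smallmatrix}\right)$ is the prototype. The one genuine subtlety is the \emph{transpose}: the abstract action used to define $\rho_m$ is $A$ acting on $\R\cdot\tau\oplus\R\cdot 1$, whereas a direct computation on $H_1$ of the elliptic curves shows that the pushforward $\phi_{A*}$ sends the $\tau$- and $1$-cycles by the \emph{transposed} matrix, so in the ordered basis $(\gamma,\gamma')$ it is represented by $\rho_m(A^t)$ rather than $\rho_m(A)$ — equivalently, the transpose is forced because cycles pair bilinearly with periods. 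Assembling the two steps yields $v(A\tau)=\tfrac{1}{c\tau+d}\,\rho_m(A^t)\,v(\tau)$. I expect the phase-and-transpose bookkeeping of the middle paragraph — verifying that every theta multiplier beyond $(c\tau+d)^{-1}$ is genuinely swallowed by the $e^{\pi i m}$ identification under the congruences of $\Gamma$, and pinning down $A^t$ as opposed to $A$, $A^{-1}$, or $A^{-t}$ — to be the main obstacle; the substitution and the Jacobian are routine.
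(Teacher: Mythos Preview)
Your approach is essentially the paper's: change variables by $w=(c\tau+d)z$, apply the $\theta_{11}$ functional equation to the ratio defining $f$, and then read off the matrix from the definition of $\rho_m$ on the pushed-forward cycle. The paper is simply terser --- it asserts in one line that $f\!\left(\tfrac{z}{c\tau+d},A\tau\right)=f(z,\tau)$ exactly (so your anticipated ``linear residue'' and quasi-period bookkeeping collapse to nothing rather than being absorbed into the $e^{\pi i m}$ identification), and then the identity $(c\tau+d)[\gamma^{(\epsilon)}(A\tau)]=\rho_m(A^t)[\gamma^{(\epsilon)}(\tau)]$ is just the definition \eqref{rho}.
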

\begin{proof}
$\theta_{11}(z,\tau)$ satisfies the functional relation
\begin{equation}
\label{thetar}
\theta_{11}\left(\frac{z}{c\tau+d},\frac{a\tau+b}{c\tau+d}\right) = \zeta_8^k\sqrt{c\tau+d}\ 
\exp(\frac{2\pi iz^2}{c\tau+d})\ \theta_{11}(z,\tau).
\end{equation}
which can be found, for instance, in \cite{MUM}. Then
\[f\left(\frac{z}{c\tau+d},\frac{a\tau+b}{c\tau+d}\right)=f(z,\tau).\]
So if $A^t \in \Gamma$,
\[\int_{[\gamma^{(\epsilon)}(A\cdot \tau)]}dz\ f(z,A\cdot\tau)^m =
\frac{1}{c\tau+d}\int_{(c\tau+d)[\gamma^{(\epsilon)}(A\cdot\tau)]}dz\ f\left(\frac{z}{c\tau+d},\frac{a\tau+b}{c\tau+d}\right)^m =\]
\[\frac{1}{c\tau+d}\int_{\rho_m(A^t) [\gamma^{(\epsilon)}(\tau)]}dz\ f(z,\tau)^m.\]
The result follows.

\end{proof}

\begin{Theorem}
 There exists a holomorphic vector-valued function 
$v_{k,m}$ on the upper-half-plane with values in  $\Sym^k \mathbb{C}^2$
satisfying
\[v_{k,m}(A\cdot \tau) = \frac{1}{(c\tau+d)^k}\, \Sym^k
\rho_m(A^t)\cdot v_{k,m}(\tau),\]
for $A\in\Gamma$, such that 
\[\frac{Z_k^\vee(m,\tau)}{(q;q)_\infty^{m^2-1}} = 
\frac{1}{2^k} \sum_{j=0}^k (-1)^{k-j}
\left(\begin{array}{c}k \\ j \end{array}\right)
\frac{(q;q)_\infty^{2j}}{(-q;q)_\infty^{2j}} v_{j,m}(\tau;m)_{1^k} =\]
\[\frac{1}{2^k} \sum_{j=0}^k (-1)^{k-j}
\left(\begin{array}{c}k \\ j \end{array}\right)
\frac{\eta(\tau)^{4j}}{\eta(2\tau)^{2j}} v_{j,m}(\tau;m)_{1^k}\]
Here $v_{k,m}(\tau;m)_{1^k}$ is the first coordinate of $v_{k,m}(\tau;m)$, and
$\eta$ is the Dedekind eta function, $\eta(q) = q^{\frac{1}{24}}\prod_{k\geq 1} (1-q^k)$
\label{vectorT}
\end{Theorem}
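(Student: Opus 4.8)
The plan is to realize each building block $G(j;m,q)$ of $Z^\vee_k$ as the first coordinate of a vector-valued period integral on a $k$-fold version of the Riemann surface $R$, and to read off the modularity from that of the single-variable integrand, exactly as in Lemma \ref{vv}.

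First I would use the expansion $Z^\vee_k(m,q)=\frac{1}{2^k}\sum_{j=0}^k(-1)^j\binom{k}{j}G(j;m,q)$ to reduce the theorem to a statement about each $G(j;m,q)$ separately. Starting from the closed form (\ref{Grtheta}) and passing to the $\theta_{11}$-normalization (which does not change the constant terms $G(j;m,q)$), I would write $G(k;m,q)$ as a $k$-fold contour integral extracting $[a_1^0\cdots a_k^0]$. After the substitution $a_i=e^{2\pi i z_i}$ the integrand factors as $\prod_i f(z_i,\tau)^m$ times a meromorphic kernel assembled from the cross terms $\theta_{11}(z_i-z_j)^2/\theta_{11}(z_i-z_j+\tfrac12)^2$; this is the multivariable generalization of (\ref{fintegral}). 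The separation $|a_1|>\cdots>|a_k|$ built into $G$ keeps the $z_i$ at distinct imaginary heights, so the contours avoid the diagonal poles of the kernel and the period is unambiguous.

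Next I would define $v_{k,m}(\tau)\in\Sym^k\C^2$ componentwise: for each assignment of cycles $\gamma^{(\epsilon_1)},\dots,\gamma^{(\epsilon_k)}$ with $\epsilon_i\in\{\gamma,\gamma'\}$ I integrate the same $k$-form over the product cycle. Since the integrand is symmetric in $z_1,\dots,z_k$, the result depends only on the multiset of the $\epsilon_i$ and hence descends to a well-defined element of $\Sym^k\C^2$. Its all-$\gamma$ (first) coordinate is the integral over $[0,1]^k$, which reproduces $G(k;m,q)$ up to the explicit prefactor $(q;q)_\infty^{m^2-1}(q;q)_\infty^{2k}/(-q;q)_\infty^{2k}$, just as in the $k=1$ identity (\ref{fintegral}). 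Substituting into the expansion of $Z^\vee_k$ gives the stated formula, and the second form follows from the eta identity $\eta(\tau)^{4j}/\eta(2\tau)^{2j}=(q;q)_\infty^{2j}/(-q;q)_\infty^{2j}$. For the modularity I would run the argument of Lemma \ref{vv} one variable at a time: under $\tau\mapsto A\tau$ with $A^t\in\Gamma$ I substitute $z_i\mapsto z_i/(c\tau+d)$ in every variable, so that each single-variable factor is invariant because $f(z/(c\tau+d),A\tau)=f(z,\tau)$, and the interaction kernel must likewise be shown invariant via the functional equation (\ref{thetar}). Only the cycles then move: rewriting each $A\cdot\gamma^{(\epsilon_i)}$ in the basis $[\gamma],[\gamma']$ via $\rho_m(A^t)$ from (\ref{rho}) and collecting the $k$ factors produces $\Sym^k\rho_m(A^t)$, while the $k$ Jacobians $(c\tau+d)^{-1}$ combine to weight $(c\tau+d)^{-k}$. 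Holomorphicity in $\tau$ is immediate since the integrand is holomorphic and the contours stay away from the singular locus.

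The hardest part will be the modularity of the interaction kernel together with the multivariable homology bookkeeping, neither of which is covered by Lemma \ref{vv}. Concretely, the quadratic automorphy factors in (\ref{thetar}) do not cancel naively in the cross terms $\theta_{11}(z_i-z_j)^2/\theta_{11}(z_i-z_j+\tfrac12)^2$; one must use theta quasi-periodicity together with the congruence conditions defining $\Gamma$ ($a,d\equiv1\ (8)$, $b\equiv0\ (4)$, $c\equiv0\ (2)$) to reduce the $\tau$-dependent shifts to the lattice $\tfrac14+\tfrac\tau2+\tfrac12\Z+\tau\Z$ of slit endpoints. The same congruences are what guarantee that pushing the product contour forward under $z_i\mapsto z_i/(c\tau+d)$ does not sweep it across the diagonal poles of the kernel, so that the transformed period genuinely decomposes as the tensor product of the single-variable transformations with no correction terms.
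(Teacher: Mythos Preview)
Your overall plan is the paper's plan: define $v_{k,m}$ as the vector of period integrals of the multivariable integrand over products of the cycles $\gamma,\gamma'$, and obtain the transformation law by running Lemma~\ref{vv} in each variable. The paper's proof consists of exactly this, together with one line naming the integrand
\[
\Bigl(\prod_{i<j} f(z_i-z_j,\tau)^2\Bigr)\Bigl(\prod_i f(z_i,\tau)^m\Bigr)
\]
and invoking that it is doubly periodic in each $z_i$.

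Where you diverge from the paper is in your assessment of the ``hardest part.'' You worry that the quadratic automorphy factors from \eqref{thetar} do not cancel in the cross terms and that the multivariable homology bookkeeping might produce correction terms. Both worries dissolve once you recognize that the interaction kernel you wrote, $\theta_{11}(z_i-z_j)^2/\theta_{11}(z_i-z_j+\tfrac12)^2$, is (up to an irrelevant shift of argument) just $f(z_i-z_j,\tau)^2$. Two consequences follow immediately. First, the invariance $f(z/(c\tau+d),A\tau)=f(z,\tau)$ already established in the proof of Lemma~\ref{vv} applies verbatim with $z$ replaced by $z_i-z_j$; the automorphy factors cancel in the ratio for $f$ exactly as they did there, so there is nothing further to check about the congruence conditions on $\Gamma$. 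Second, since $f(z+\tau,\tau)=-f(z,\tau)$, the square $f(\,\cdot\,,\tau)^2$ is honestly doubly periodic. Hence under the identification $[c+\tau]=e^{\pi i m}[c]$ that defines $\rho_m$, the cross terms contribute nothing; all monodromy comes from the single-variable factors $f(z_i,\tau)^m$ independently, and the representation really is the tensor product $\Sym^k\rho_m$ with no diagonal corrections. This double periodicity is the one-line ingredient the paper adds to Lemma~\ref{vv}; once you see it, the parts you flagged as hardest become the easiest.
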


\begin{proof}
The function is
\[v_{k,m}(\tau;m)_{1^{k_1}2^{k_2}} = 
\int_{\gamma} dz_1\cdots \int_{\gamma} dz_{k_1} \int_{\gamma'} dz_{k_1+1}\cdots \int_{\gamma'} dz_{k_1+k_2}\]
\[\left(\prod_{i < j} f(z_i-z_j,\tau)^2\right)\left(\prod_i f(z_i,\tau)^m\right).\]
The proof is the same as for lemma \ref{vv}, and uses the fact that
the integrand on the left is doubly periodic in each $z_i$.
\end{proof}

\subsection{An Example}

Let us apply the theorem to compute $Z^{\vee}$ when $m$ is odd. 
The situation then greatly simplifies, since one can easily check that
$\rho_m(A)$ is diagonal for $A \in G$. Therefore
\[v_{k,m}(A\cdot \tau)_{1^k} = \frac{-1}{(c\tau+d)^k} v_{k,m}(\tau)_{1^k}\]
for $A\in G$. Since $v_{k,m}$ is periodic in $\tau$, we may
also drop the condition $b \cong 2\ \mbox{(mod $4$)}$, and extend the group
to all of $\Gamma_1(4)$. Since $\eta(2\tau)^2\eta(\tau)^{-4}$ satisfies
the same modularity condition, $Z_k^\vee(m,q)/(q;q)_\infty^{m^2-1}$ is modular of weight $0$.
We would like it to be a modular form, but it may have poles at the cusps of $X_1(4)$.
Multiplying by suitable power of the $\eta$ function takes care of the problem.

For instance, we see that
\[Z_2^\vee(3,q) = -16q+128q^5-320q^9+1120q^{17}-1024q^{21}+...\]
The function
\[\frac{Z_2^\vee(3,q)}{(q;q)_\infty^8}\eta(\tau)^4\eta(2\tau)^2\eta(4\tau)^4\]
does not have poles, is periodic in $\tau$, and is therefore a modular
form of degree $5$ for $\Gamma_1(4)$.
A quick calculation in \texttt{SAGE} (c.f. \cite{S}) shows that the vector space of such functions is
$3$-dimensional, spanned by
\begin{eqnarray*}
E_1(q) & = & q - 4q^2 + 16q^4 - 14q^5 - 64q^8 + 81q^9 +...\\
E_2(q) & = & 1 - 12q^2 - 128q^3 - 204q^4 - 1088q^6  - ...  \\
E_3(q) & = & q + 16q^2 + 80q^3 + 256q^4 + 626q^5 + 1280q^6 +...
\end{eqnarray*}
Looking at the low order coefficients, we see that it must be 
$(4/5)E_1(q)-(4/5)E_3(q)$.

\end{document}